\newcommand{\K}{\mathbb{K}}
\newcommand{\R}{\mathbb{R}}
\newcommand{\C}{\mathbb{C}}
\newcommand{\N}{\mathbb{N}}
\newcommand{\A}{\mathcal{A}}
\newcommand{\U}{\mathcal{U}}
\newcommand{\D}{\mathcal{D}}
\newcommand{\V}{\mathcal{V}}
\newcommand{\W}{\mathcal{W}}
\renewcommand{\L}{\mathcal{L}}
\newcommand{\T}{\mathcal{T}}
\renewcommand{\d}{\mathrm{d}}
\newcommand{\I}{\mathrm{I}}
\renewcommand{\Re}{\operatorname{Re}}
\newcommand{\frF}{\mathfrak{F}}
\newcommand{\frA}{\mathfrak{A}}
\newcommand{\frB}{\mathfrak{B}}
\newcommand{\frC}{\mathfrak{C}}
\newcommand{\frD}{\mathfrak{D}}
\renewcommand{\mid}{\, \vert \,}
\renewcommand{\H}{\mathcal{H}}
\DeclareMathOperator{\id}{id}
\DeclareMathOperator{\ran}{Ran}
\newcommand\ra{\rightarrow}
\newcommand\lra{\rightarrow}
\theoremstyle{plain}
\newtheorem{theorem}{Theorem}[section]
\newtheorem{proposition}[theorem]{Proposition}
\newtheorem{lemma}[theorem]{Lemma}
\newtheorem{corollary}[theorem]{Corollary}
\newtheorem{remark}[theorem]{Remark}
\theoremstyle{definition}
\newtheorem{example}[theorem]{Example}
\newtheorem{definition}[theorem]{Definition}
\newtheorem{assumption}[theorem]{Assumption}
\begin{document}
\title[Well-posedness for non-autonomous passive boundary control systems]{Well-posedness of infinite-dimensional non-autonomous passive boundary control systems}
\author{ Birgit Jacob$^1$ and Hafida Laasri$^{2}$
}
\address{}
\email{}
\thanks{ \noindent $1$ University of Wuppertal, School of Mathematics and Natural
Science, Gau\ss strasse 20, D-42119 Wuppertal, Germany, bjacob@uni-wuppertal.de.  \\  $2$  University of Wuppertal, School of Mathematics and Natural
Science, Gau\ss strasse 20, D-42119 Wuppertal, Germany, laasri@uni-wuppertal.de. Support  by  Deutsche Forschungsgemeinschaft (Grant LA 4197/1-1) is gratefully acknowledged.}
\maketitle

\begin{abstract}\label{abstract}
We study a class of non-autonomous boundary control and observation linear systems  that are governed by non-autonomous multiplicative perturbations. This class is motivated by different fundamental partial differential equations, such as controlled wave equations and Timoshenko beams. Our main results give sufficient condition for well-posedness, existence and uniqueness of classical and mild solutions.  
\end{abstract}

\bigskip
\noindent
{\bf Key words: } infinite-dimensional non-autonomous control system, evolution family, port-Hamiltonian system, well-posedness \medskip

\noindent
\textbf{MSC:} 93C25, 47D06, 93C20.

\section{Introduction\label{s1}}

We consider the following non-autonomous partial differential equation with boundary input $u$ and  boundary output $y$ 
\begin{align*}
 \frac{\partial}{\partial t}x(t,\zeta)&=\sum_{k=1}^N P_k(t) \frac{\partial^k }{\partial \zeta^k}\big[\H(t,\zeta)x(t,\zeta)\big]+P_0(t,\zeta)\H(t,\zeta)x(t,\zeta),\quad t\geq 0,\ \zeta\in(a,b)
\\ x(0,\zeta)&=x(\zeta), \qquad\qquad \zeta\in(a,b),
\\  u(t)&=W_{B,1} \tau(\H x)(t),\qquad \qquad t\geq 0,
\\ 0&= W_{B,2} \tau(\H x)(t),\qquad \qquad t\geq 0,
\\ y(t)&=W_C \tau(\H x)(t),\quad\quad\quad \quad  t\geq 0.
\end{align*}

 Here $\tau$ denotes the  \textit{trace operator} $\tau: H^N ((a,b);\K^n)\lra \K^{2Nn}$ defined by
 \[ \tau(x):=\big( x(b),x'(b),\cdots, x^{N-1}(b),x(a),x'(a)\cdots, x^{N-1}(a)\big),\]
  $P_k(t)$ is $n\times n$ matrix for all $t\geq 0$, $k=0,1,\cdots,N$, $\H(t,\zeta)\in\K^{n\times n}$ for all $t\geq 0$ and almost every $\zeta\in[a,b]$, $W_{B,1}$ is a $m\times 2nN$-matrix, $W_{B,2}$ is $(nN-m)\times 2nN$-matrix and $W_C$ is a $d\times 2nN$-matrix. Finally, $u(t)\in \K^{m}$ denotes the input and $y(t)\in\K^{d}$ is the output at time $t$.
\medskip

This partial differential  equation is also known as port-Hamiltonian systems, and  covers the wave equation, the transport equation,  beam equations, coupled beam and wave equations as well as certain networks. 
Autonomous port-Hamiltonian systems, that is when $\H, P_k$ are time-independent, have
been intensively  investigated, see e.g.,  \cite{Jac-Kai18, Jac-Mor-Zwa15, Aug-Jac14, Au16, Jac-Zwa12, LZM05, Vi07, Z-G-M-V10}. The existence of mild/classical solutions with non-increasing energy and well-posedness for autonomous port-Hamiltonian 
systems can in most cases be tested via a simple matrix condition  \cite[Theorem 4.1]{LZM05}.  Well-posedness of linear systems in general is not easy to prove and a necessary condition is that the state operator generates a strongly continuous semigroup. For the class of autonomous port-Hamiltonian systems of first order i.e., $N=1$, this condition is even sufficient under some weak assumptions on $P_1\H,$  see  \cite{LZM05} or \cite[Theorem 13.2.2]{Jac-Zwa12}.  
\medskip

In this paper, we aim to generalize these solvability and well-posedness results to the non-autonomous situation. To our knowledge, in contrast to infinite-dimensional autonomous port-Hamiltonian systems, the non-autonomous counterpart has not been discussed so far.  Motivated by this class we start a systematic study of non-autonomous linear boundary control and observation systems, and in particular those  of the following form
\begin{align}
\label{AbstractNBCS Introdution State eq} \dot x(t)&=\frA(t)M(t)x(t), \quad x(0)=x_0, \ t\geq 0\\
\label{AbstractNBCS Introdution}\frB M(t)x(t)&=u(t),\\
\label{AbstractNBCS Introdution Output Eq} \frC M(t)(t) x(t)&=y(t),
\end{align}
which we denote by $\Sigma_{N,M}(\frA, \frB,\frC).$ Here $\frA(t): \frD\subset X\lra X$ is a linear operator, $u(t)\in U$, $y(t)\in Y$, the \textit{boundary operators} $\frB: D(\frB)\subset X\lra U$ and $\frC: \frD\subset X\lra Y$ are linear such that $\frD\subset D(\frB)$, $X$, $U$ and $Y$ are complex Hilbert spaces and $M(t)\in\L(X)$ for all $t\geq 0.$
Setting 
\begin{equation*}\frA(t) x=\sum_{k=0}^N P_k(t) \frac{\partial^k }{\partial \zeta^k}x, \quad 
 \frB x:= \begin{bmatrix}W_{B,1}\\W_{B,2}\end{bmatrix}\tau(x), \quad \frC x:= W_C  \tau(x), \text { and }\ \H(t,\cdot):=M(t)
\end{equation*}
we see  that the non-autonomous port-Hamiltonian system is in fact a special class of non-autonomous systems of the form \eqref{AbstractNBCS Introdution State eq}-\eqref{AbstractNBCS Introdution Output Eq}.

A pair $(x,y)$ is  a \textit{classical solution of} \eqref{AbstractNBCS Introdution State eq}-\eqref{AbstractNBCS Introdution Output Eq} if $x\in C^{1}((0,\infty);X)\cap C([0,\infty);X), y\in C([0,\infty);Y)$ and $x(t)\in D(\frA(t)M(t))$ for all $t\geq 0$ such that $x,y$ satisfy \eqref{AbstractNBCS Introdution State eq}-\eqref{AbstractNBCS Introdution} and \eqref{AbstractNBCS Introdution Output Eq}, respectively. The system $\Sigma_{N,M}(\frA, \frB,\frC)$ is called  \textit{well-posed} if for each (classical) solution $(x,y)$ and any \textit{final time} $\tau>0,$ the operator mapping the input functions $u$ and to the initial state
$x_0$ to $x(\tau)$ and the output functions
$y$ is bounded, i.e.
\begin{equation*}\label{Def: well posedness}
\|x(\tau)\|^2+\int_0^\tau \|y(s)\|^2ds\leq m_\tau \big( \|x_0\|^2+\int_0^\tau \|u(s)\|^2 ds\big)
\end{equation*}
for some constant $m_\tau>0$ independent of $x_0$ and $u.$ 
\medskip

Our approach for the solvability of $\Sigma_{N,M}(\frA, \frB,\frC)$ is based on  a non-autonomous  version of the \textit{Fattorini's trick}, the theory of \textit{evolution families} together with an idea of Schnaubelt and Weiss \cite[Section 2]{SchWei10}. 

Evolution families are a generalization of
strongly continuous semigroups, and are often used to describe the solution of an \textit{abstract non-autonomous Cauchy problem}. In Section \ref{section Intr-EVF}, we 
therefore review the concept of evolution families and that of $C^1$-well posed non-autonomous Cauchy problems. Furthermore, we provide several abstract results which are crucial for the analysis of our non-autonomous boundary control and observation systems.

Fattorini's trick  is well known for autonomous boundary control systems \cite{Jac-Zwa12, Fat68, Cur-Zwa95}. The basic idea of this approach is to reformulate the state and the control equation into an abstract inhomogeneous Cauchy problem on $X.$  A brief description of the autonomous situation is given in Subsection \ref{subsection1 BCOAutobonous}. In Subsection \ref{subsection ExistenceClSolN} we provide a generalization to non-autonomous boundary control systems (see Proposition \ref{Proposition Fattorini Trick Non-A}). This generalization and the results of Section \ref{section Intr-EVF} are then used to prove our main classical solvability results: Theorem \ref{Main theorem pert} and Theorem \ref{main result Phs1}.

The second main purpose of this paper is the study of  the well-posedness for  non-autonomous boundary and observation systems $\Sigma_{N,M}(\frA, \frB,\frC).$ However,  we will restrict ourselves to the case where for every $t\ge 0$ the (unperturbed) autonomous system $\Sigma_{N,\id}(\frA(t), \frB,\frC)$ is \textit{$(R(t),P(t),J(t))$-scattering  passive} i.e., when 
\begin{equation*}2\Re (\frA(t)x\mid P(t)x)_{_X}\leq (R(t)u\mid \frB x)_{_U}-(\frC x\mid J(t)\frC(x)_{_Y} \end{equation*}
for all $x$ in an appropriate subspace of $X\times U$ where $P(t), R(t)$ and $J(t)$ are bounded linear operators. A precise definition and a characterization of scattering passive autonomous and non-autonomous systems is the subject of Subsection \ref{Section: Scattering passive ABCOs} and Subsection \ref{Subsection Scattering passive NBCO-systems}, respectively. Under additional conditions we then prove in Theorem \ref{Main theorem pert} that the perturbed system $\Sigma_{N,M}(\frA, \frB,\frC)$ is well-posed. In particular, we deduce in Theorem \ref{main result Phs1} that well-posedness for a large class of non-autonomous port-Hamiltonian systems can be checked via a simples matrix condition. 

In the literature most attention has been devoted to autonomous control systems. However,  in view of applications, the interest in non-autonomous systems has been rapidly growing in recent years, see  e.g., \cite{HaaHoaOuh18, Pau17, CheWei15, SchWei10, Ha-Rh-Sch08, BouIdr08, Ha06, Sch02} and the references therein. In particular, a class of scattering passive linear non-autonomous linear systems of the form 
\begin{eqnarray}
 \dot{x}(t) &=& A_{-1}M(t)x(t) + Bu(t), \quad t\ge 0, \quad x(0)=x_0, \label{eq1:abst-introduction}\\
 y(t) &=& CM(t)x(t)+ Du(t) \label{eq2:abst-introduction}
 \end{eqnarray}
has been studied by R. Schnaubelt and G. Weiss in \cite{SchWei10}. Here $(A,D(A))$ generates a strongly continuous semigroup on $X$, $A_{-1}\in \L(X,X_{-1})$ is a bounded extension of $(A,D(A))$,  $B\in\L(U,X_{-1}),$  $C\in\L(Z,Y)$ and $D\in \L(U,Y),$ where $X_{-1}$ is the extrapolation space corresponding to $A,$ and $Z:=D(A)+(\alpha-A)^{-1}BU$ for some $\alpha\in\rho(A).$ 

The control part \eqref{AbstractNBCS Introdution State eq}-\eqref{AbstractNBCS Introdution} of the nonautonomous boundary control  system $\Sigma_{N,M}(\frA, \frB,\frC)$ can be  rewritten in the (standard) abstract formulation \eqref{eq1:abst-introduction}, however, in the particular case where $\frA(t)=\frA$ is constant which for non-autonomous port-Hamiltonian system  correspond to the case where the matrices $P_k, k=1,\cdots, N,$ are constant with respect to time variable. On the other hand, when $\frA(t)=\frA$ the output part \eqref{AbstractNBCS Introdution Output Eq} could be also written into \eqref{eq2:abst-introduction} using the concept of \textit{system nodes}. Indeed, well-posed autonomous port-Hamiltonian system fit into the framework of \textit{compatible system nodes} \cite[Theorem 10]{Vi-Go-Zw-Sc05}.  This can be also easily generalized  for boundary control and observation systems defined in Definition \ref{Def:BCOS}. Since we do not follow the approach of \cite{SchWei10}, this topic will not be discussed in this paper and we refer to \cite{Sta-Wei02, TucWei14} for more details on system nodes. 
\medskip

For the general case, that is when $\frA$ is not constant,  then $A_{-1}, B, C, D$ and $Z$ will be time dependent. Thus, the abstract results in  \cite{SchWei10} cannot be immediately applied to deduce classical solvability and well-posedness for \eqref{AbstractNBCS Introdution State eq}-\eqref{AbstractNBCS Introdution Output Eq}.  We expect that the results in  \cite{SchWei10} can be generalized to include this general case. However, for the class of boundary control systems defined in Definition \ref{Def:BCOS} we deal directly with \eqref{AbstractNBCS Introdution State eq}-\eqref{AbstractNBCS Introdution} in combination with Fattorini's trick instead of its corresponding system \eqref{eq1:abst-introduction}-\eqref{eq2:abst-introduction}. Our method is indeed much simpler. Moreover, in general it is not clear how the solution of  \eqref{eq1:abst-introduction}-\eqref{eq2:abst-introduction} can be related to that of  \eqref{AbstractNBCS Introdution State eq}-\eqref{AbstractNBCS Introdution Output Eq} even for the special case where $\frA(t)=\frA$ is constant. In the autonomous case  this relationship is quite simple as we can see in Section \ref{BCOAutobonous}. The reason is that $C_0$-semigroups can be always extended to the extrapolation space. The situation  is more delicate for the non-autonomous setting. Indeed, a general extrapolation theory for evolution families is still missing. Moreover, the extrapolation space may also depend on the time variable. In Section \ref{Section Mild solution} we deal with this question by associating  a \textit{mild solution} to the control part  \eqref{AbstractNBCS Introdution State eq}-\eqref{AbstractNBCS Introdution} of the nonautonomous boundary control  system $\Sigma_{N,M}(\frA, \frB,\frC).$

Finally, we apply our abstract results to non-autonomous port-Hamiltonian systems, in particular to the time-dependent vibrating string and the time-dependent Timoschenko beam. 

\section{Background on evolution families and preliminary results}\label{section Intr-EVF}
Throughout this section $(X, \|\cdot\|)$ is a Banach space.  Let $\A:=\{A(t) \mid t\geq 0\}$ be a family of linear, closed operators with domains $\{D(A(t))\mid t\geq 0\}$. Consider the \textit{non-autonomous Cauchy problem} 
\begin{equation}\label{Nonautonomous InitialValueProblem}
\dot u(t)=A(t)u(t)\  \ {\rm  \  on  } \ [s,\infty),\ u(s)=x_s, (s>0).
\end{equation}
A continuous function $u:[s,\infty)\lra X$ is called a \textit{classical solution} of (\ref{Nonautonomous InitialValueProblem}) if $u(t)\in D(A(t))$ for all $t\geq s, u\in C^1((s,\infty), X)$ and $u$ satisfies (\ref{Nonautonomous InitialValueProblem}). 
\begin{definition}\label{Definition well posedness} The non-autonomous Cauchy problem (\ref{Nonautonomous InitialValueProblem}) is called $C^1$-\textit{well posed} if there is a family $\{Y_t\mid  t\geq 0\}$ of dense subspaces of $X$ such that:
\begin{itemize}
	\item [$(a)$] $Y_t\subseteq D(A(t))$ for all $t\geq 0$.
	\item [$(b)$] For each $s\geq 0$ and $x_s\in Y_s$ the Cauchy problem \eqref{Nonautonomous InitialValueProblem} has a unique classical solution $u(\cdot,s,x_s)$ with $u(t,s,x_s)\in Y_t$ for all $t\geq s$.
	\item [$(c)$] The solutions depend continuously on the initial data $s, x_s$.
\end{itemize} 
If we want to specify the \textit{regularity subspaces} $Y_t$, $t\geq 0$,  we also say (\ref{Nonautonomous InitialValueProblem}) is  $C^1$-\textit{well posed} on $Y_t$.
\end{definition}
\noindent In the autonomous case, i.e., if $A(t)=A$ is constant, then it is well known that the associated Cauchy problem is well-posed if and only if $A$ generates a $C_0$-semigroup $(T(t))_{t\geq 0}$. In this case, for each $x\in D(A)$ the unique classical solution  is given by $T(\cdot)x$.  The following definition provides a natural generalization of operator semigroups for non-autonomous evolution equations.
\par\noindent 

\begin{definition}\label{Definition EVF}
	A family $\U:=\{U(t,s)\mid (t,s)\in\Delta\} \subset\L(X)$ where $\Delta:=\{t,s\geq 0\mid t\geq s\}$ is called \textit{an evolution family} if:
	\begin{itemize}
		\item[$(i)$] $U(t,t)=I$ and $U(t,s)= U(t,r)U(r,s)$ for every $0\leq s\leq r\leq t$, 
		\item [$(ii)$]  $U(\cdot,\cdot): \Delta\lra \L(X)$ is strongly continuous.
		\end{itemize}
The evolution family $\U$ is said to be generated by $\A$, if there is a family $\{Y_t\mid  t\geq 0\}$ of dense subspaces of $X$ with $Y_t\subset D(A(t))$ and 	\begin{itemize}
		\item [$(iii)$]  For every $x_s\in Y_s$, the function $t\lra U(t,s)x_s$ is the unique  classical solution of (\ref{Nonautonomous InitialValueProblem}).
	\end{itemize}
	\end {definition}
\par\noindent The Cauchy problem (\ref{Nonautonomous InitialValueProblem}) is then $C^1$-well posed if and only if  $A(t)$, $t\geq 0$, generates a unique evolution family, see  \cite[Proposition 9.3]{EnNa00} or \cite[Proposition 3.10]{GN96}. Clearly, if $(T(t))_{t\geq 0}$ is a $C_0$-semigroup in $X$ with generator $(A,D(A))$, then $U(t,s):=T(t-s)$ yields an evolution family on $X$ with regularity spaces $Y_t=D(A)$.

\subsection{Similar evolution families}  Let  $\U:=\{U(t,s)\mid  (t,s)\in\Delta\}$ be an evolution family on $X$ and let $\{Q(t)\mid t\geq 0 \} \subset \L(X)$ be a family of isomorphisms on $X$ such that  $Q$ and  $Q^{-1}$ are strongly continuous on $[0,\infty)$. Define the two parameters operator family $\mathcal W:=\{W(t,s)\mid (t,s)\in\Delta\}$ by
\begin{equation}\label{SimilarEVFDef}
W(t,s)=Q^{-1}(t)U(t,s)Q(s)\quad \text{ for } (t,s)\in\Delta.
\end{equation}

It is well known that if $S$ is  a $C_0$-semigroup on $X$ with generator $A$ and $Q\in\L(X)$ is an isomorphism, then $T(\cdot):=Q^{-1}S(\cdot)Q$ is again a $C_0$-semigroup on $X$,  called similar $C_0$-semigroup to $S$, and its generator is given by $Q^{-1}AQ$, where
\[D( Q^{-1}AQ)=D(AQ)=\{x\in X\mid Qx\in D(A)\}={ Q^{-1}D(A)}.\] 
The purpose of this section is to generalize the concept of similar semigroups  to evolution families.
\begin{lemma}The  two parameters family $\W$, defined by  \eqref{SimilarEVFDef}, defines an evolution family on $X$.
\end{lemma}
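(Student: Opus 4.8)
The plan is to verify the two axioms in Definition \ref{Definition EVF} for the family $\W$ given by \eqref{SimilarEVFDef}: the cocycle identities, which are purely algebraic, and the strong continuity of $(t,s)\mapsto W(t,s)$, which is the only point that needs care. First I would note that each $W(t,s)$ is bounded, being a composition of three bounded operators, so $\W\subset\L(X)$. For the identities: $W(t,t)=Q^{-1}(t)U(t,t)Q(t)=Q^{-1}(t)Q(t)=I$ since $U(t,t)=I$; and for $0\le s\le r\le t$ one inserts $Q(r)Q^{-1}(r)=I$ between the factors $U(t,r)$ and $U(r,s)$ and uses the propagator identity $U(t,r)U(r,s)=U(t,s)$ from Definition \ref{Definition EVF}$(i)$ to obtain $W(t,r)W(r,s)=Q^{-1}(t)U(t,r)U(r,s)Q(s)=Q^{-1}(t)U(t,s)Q(s)=W(t,s)$.

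For strong continuity I would fix $x\in X$ and $(t_0,s_0)\in\Delta$, take a sequence $(t_n,s_n)\to(t_0,s_0)$ in $\Delta$, and estimate $\|W(t_n,s_n)x-W(t_0,s_0)x\|$ by adding and subtracting terms so that in each summand only one of the three factors $Q^{-1}(\cdot)$, $U(\cdot,\cdot)$, $Q(\cdot)$ varies. Concretely, first peel off $Q^{-1}(t_n)$, writing the difference as $Q^{-1}(t_n)\bigl(U(t_n,s_n)Q(s_n)x-U(t_0,s_0)Q(s_0)x\bigr)+\bigl(Q^{-1}(t_n)-Q^{-1}(t_0)\bigr)U(t_0,s_0)Q(s_0)x$, where the second term tends to $0$ by strong continuity of $Q^{-1}$; then split the remaining vector as $U(t_n,s_n)\bigl(Q(s_n)x-Q(s_0)x\bigr)+\bigl(U(t_n,s_n)-U(t_0,s_0)\bigr)Q(s_0)x$, the first piece being controlled by strong continuity of $Q$ and the second by strong continuity of $\U$ evaluated at the fixed vector $Q(s_0)x$.

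The one nonroutine ingredient — and really the only obstacle — is that these estimates require the operator norms $\|Q^{-1}(t_n)\|$ and $\|U(t_n,s_n)\|$ to stay bounded along the sequence. This is supplied by the uniform boundedness principle: on any compact rectangle $K=\{(t,s):0\le s\le t\le\tau\}$ the set $\{U(t,s):(t,s)\in K\}$ is strongly bounded, since for each $y\in X$ the map $(t,s)\mapsto U(t,s)y$ is continuous, hence bounded, on the compact set $K$; therefore it is uniformly bounded in operator norm, and the same argument applies to $\{Q(t):t\in[0,\tau]\}$ and $\{Q^{-1}(t):t\in[0,\tau]\}$. Choosing $\tau$ large enough that $K$ contains $(t_0,s_0)$ together with all the $(t_n,s_n)$ yields the uniform bound needed above, and the triangle‑inequality argument then gives $W(t_n,s_n)x\to W(t_0,s_0)x$. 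Having checked $(i)$ and $(ii)$, $\W$ is an evolution family on $X$.
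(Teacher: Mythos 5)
Your proof is correct and follows essentially the same route as the paper: the cocycle identities are immediate, and strong continuity is obtained by a triangle-inequality splitting so that only one factor varies at a time, with the uniform boundedness principle (applied on compact parameter sets, where strong continuity gives pointwise boundedness) supplying the locally uniform operator-norm bounds on $Q^{-1}(\cdot)$ and $U(\cdot,\cdot)$ needed to close the estimate. The only cosmetic difference is the order in which the factors are peeled off, which does not change the substance of the argument.
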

\begin{proof}
Clearly, the evolution law $(i)$ in Definition \ref{Definition EVF} is fulfilled. It remains to prove the strong continuity of $\W$ in $\Delta$. Let $x\in X$ and $T>0$, and set $\Delta_T:=\{(t,s)\in[0,T]^2 \mid t\geq s\}$. Let $(t,s)$, $(t_n,s_n) \in \Delta_T$ for $n\in\N$ such that $(t_n,s_n)\ra (t,s)$. Then $\{Q^{-1}(t_n)\mid  n\in\N\}$ is bounded by the  uniform boundedness theorem.  Since 
\begin{align*}
\|Q^{-1}(t_n)U(t_n,s_n)x-Q^{-1}(t)U(t,s)x\|&\leq \|Q^{-1}(t_n)\|\|U(t_n,s_n)x-U(t,s)x\|\\&\qquad \qquad+\|[Q^{-1}(t_n)-Q^{-1}(t)]U(t,s)x\|,
\end{align*}
we deduce that $(t,s)\mapsto Q^{-1}(t)U(t,s)x$ is continuous on $\Delta_T$. Thus, using a similar argument for $Q(s)$ and $Q^{-1}(t)U(t,s)$ we obtain that $(t,s)\mapsto W(t,s)x$ is continuous on $\Delta_T$. Since $T>0$ is arbitrary, this proves the assertion. \end{proof}
In contrast to semigroups, the evolution law $(i)$ and the strong continuity $(ii)$ do not guarantee that the given evolution family is generated by some family of linear closed operators.

\begin{proposition}\label{Prop Similar EVF} Assume that  $Q(\cdot)$ is in addition strongly $C^1$-differentiable. Then $\U$ is generated by a family  $\A$ with regularity spaces $\{Y_t\mid  t\geq 0\}$ if and only if $\W$ is generated by $\A_Q:=\{Q^{-1}(t)A(t)Q(t)-Q^{-1}(t)\dot Q(t)\mid \ t\geq 0\}$ with regularity spaces $\{\tilde{Y_t}\mid \ t\geq 0\}$ where \[\tilde{Y_t}:=\{ x\in X\mid {Q}(t)x\in Y_t\}.\]
\end{proposition}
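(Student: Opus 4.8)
The plan is to prove the equivalence by transporting classical solutions back and forth along the conjugation $x \mapsto Q(t)^{-1}x$, using the product rule for strongly $C^1$-differentiable operator families. The key observation is that if $u(\cdot)$ solves $\dot u(t) = A(t)u(t)$ on $[s,\infty)$ with $u(t) \in Y_t$, then the function $v(t) := Q(t)^{-1}u(t)$ is a candidate solution for the Cauchy problem associated with $\A_Q$. First I would check the algebra: since $Q(\cdot)^{-1}$ is strongly $C^1$ (this follows from the identity $\frac{d}{dt}Q(t)^{-1} = -Q(t)^{-1}\dot Q(t) Q(t)^{-1}$, which holds because $Q$ is strongly $C^1$ and $Q(t), Q(t)^{-1}$ are locally bounded by uniform boundedness), the map $t \mapsto v(t) = Q(t)^{-1}u(t)$ is continuous on $[s,\infty)$ and $C^1$ on $(s,\infty)$, with
\[
\dot v(t) = -Q(t)^{-1}\dot Q(t)Q(t)^{-1}u(t) + Q(t)^{-1}\dot u(t) = \big(Q(t)^{-1}A(t)Q(t) - Q(t)^{-1}\dot Q(t)\big)v(t),
\]
using $u(t) = Q(t)v(t)$. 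Moreover $v(t) \in \tilde Y_t$ exactly because $Q(t)v(t) = u(t) \in Y_t$, and $\tilde Y_t \subseteq D(Q(t)^{-1}A(t)Q(t)) = D(A(t)Q(t))$ by definition, so the generator condition $(a)$ of Definition \ref{Definition well posedness} is met. Density of $\tilde Y_t$ in $X$ follows from density of $Y_t$ together with $\tilde Y_t = Q(t)^{-1}Y_t$ and the fact that $Q(t)^{-1}$ is an isomorphism.

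Next I would match this up with the evolution families. Assuming $\U$ is generated by $\A$ with regularity spaces $Y_t$, condition $(iii)$ of Definition \ref{Definition EVF} says $t \mapsto U(t,s)x_s$ is the unique classical solution for $x_s \in Y_s$. Given $\tilde x_s \in \tilde Y_s$, set $x_s := Q(s)\tilde x_s \in Y_s$; then $u(t) := U(t,s)x_s$ is the unique classical solution of the $A$-problem, and by the computation above $v(t) := Q(t)^{-1}U(t,s)Q(s)\tilde x_s = W(t,s)\tilde x_s$ is a classical solution of the $\A_Q$-problem lying in $\tilde Y_t$. For uniqueness: if $\tilde v$ were another classical solution of the $\A_Q$-problem with data $\tilde x_s$, then $Q(\cdot)\tilde v(\cdot)$ would be a classical solution of the $A$-problem with data $x_s$ (same product-rule computation, run in reverse), forcing $Q(t)\tilde v(t) = U(t,s)x_s$ and hence $\tilde v(t) = W(t,s)\tilde x_s$. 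This establishes that $\W$ is generated by $\A_Q$ with regularity spaces $\tilde Y_t$. The converse direction is entirely symmetric: $\U$ is recovered from $\W$ by the analogous conjugation with $Q(t)$ in place of $Q(t)^{-1}$ (note $(Q^{-1})^{-1} = Q$ and $\W$ written in terms of $\U$ is $U(t,s) = Q(t)W(t,s)Q(s)^{-1}$), and applying the already-proven implication with $\W, \A_Q, \tilde Y_t, Q^{-1}$ in the roles of $\U, \A, Y_t, Q$ yields $\U$ generated by $(\A_Q)_{Q^{-1}}$; a direct computation shows $(\A_Q)_{Q^{-1}} = \A$ and $\tilde{\tilde Y_t} = Y_t$, closing the loop.

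The main obstacle I anticipate is purely technical bookkeeping rather than a conceptual difficulty: one must be careful that all the differentiations are genuinely valid at the level of \emph{strong} differentiability (applying everything to a fixed vector and using that $\dot Q(t)$, $\dot{(Q^{-1})}(t)$ exist as strong limits), that the regularity-space inclusions $\tilde Y_t \subseteq D(\A_Q(t))$ are checked against the correct domain $D(A(t)Q(t)) = \{x : Q(t)x \in D(A(t))\}$, and that the "unique evolution family" characterization (quoted from \cite[Proposition 9.3]{EnNa00}) is invoked consistently so that $C^1$-well-posedness of one problem transfers to the other. A secondary point worth spelling out is that $\W$ is already known to be an evolution family by the preceding Lemma, so the content of the proposition is solely the generation statement; no separate verification of the cocycle law or strong continuity of $\W$ is needed here. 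I would organize the write-up as: (1) record the product rule for $Q(\cdot)^{-1}$ and the two conjugation computations; (2) prove the forward implication including uniqueness; (3) observe the setup is symmetric and invoke (2) in reverse, verifying $(\A_Q)_{Q^{-1}} = \A$.
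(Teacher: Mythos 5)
Your proposal is correct and follows essentially the same route as the paper: the forward implication via the product-rule computation $\frac{\d}{\d t}\big[Q^{-1}(t)U(t,s)Q(s)x_s\big]=\big[Q^{-1}(t)A(t)Q(t)-Q^{-1}(t)\dot Q(t)\big]W(t,s)x_s$ together with the identifications $\tilde Y_t=Q^{-1}(t)Y_t\subset D(A(t)Q(t))$, and the converse by symmetry, applying the forward step with $Q^{-1}$ in place of $Q$ and noting $(\A_Q)_{Q^{-1}}=\A$. Your explicit uniqueness argument (transporting a second solution back with $Q$) merely spells out what the paper dismisses with ``clearly,'' so there is nothing to change.
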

\begin{proof} $(i)$ Assume that $\U$ is generated by $\A$ with regularity spaces $\{Y_t\mid \  t\geq 0\}$. 
We first remark that $\tilde{Y_t}$ is a dense subspace of $X$ and 
\begin{equation}\label{LemmaSimilarEVFProofEq0}
\tilde{Y_t}=Q^{-1}(t)Y_t\subset Q^{-1}(t)D(A(t))=D(A(t)Q(t))=D(A_Q(t))
\end{equation}
for every $t\geq 0$, where $A_Q(t):=Q^{-1}(t)A(t)Q(t)-Q^{-1}(t)\dot Q(t)$. Next, let $x_s\in \tilde{Y_s}$. Then $Q(s)x_s\in Y_s$ and by assumption $U(\cdot,s)Q(s)x_s$ is  the unique classical solution of 
\begin{equation}\label{LemmaSimilarEVFProofEq1}
\dot u(t)=A(t)u(t)\  \ {\rm    on  } \ [s,\infty),\ u(s)=Q(s)x_s, (s>0).
\end{equation}
It follows that $W(t,s)x\in Y_t\subset D(A_Q(t))$ by \eqref{LemmaSimilarEVFProofEq0} and 
\begin{align}
\nonumber\frac{d}{dt} W(t,s)x_s&=[\frac{d}{dt}Q(t)^{-1}]U(t,s) Q(s)x_s+Q(t)^{-1}\frac{d}{dt} U(t,s) Q(s)x_s
\\\label{LemmaSimilarEVFProofEq2}&=-Q(t)^{-1}\dot Q(t)Q(t)^{-1}U(t,s)Q(s)x_s+Q(t)^{-1}A(t)U(t,s) Q(s)x_s
\\\label{LemmaSimilarEVFProofEq3}&=\big[Q^{-1}(t)A(t)Q(t)-Q^{-1}(t)\dot Q(t)\big]W(t,s)x_s.
\end{align}
Since $Q$ is strongly $C^1$-differentiable, it now follows from \eqref{LemmaSimilarEVFProofEq2}-\eqref{LemmaSimilarEVFProofEq3} that $W(\cdot,s)x_s\in C^1((s,\infty), X)$ and $W(\cdot,s)x_s $ solves the non-autonomous problem 
\begin{equation}\label{LemmaSimilarEVFProofEq4}
\dot u(t)=A_Q(t)u(t)\  \ {\rm    on  } \ [s,\infty),\ u(s)=x_s.
\end{equation}
Clearly, $W(\cdot,s)x_s$ is the unique classical solution of \eqref{LemmaSimilarEVFProofEq4}. We conclude that $\W$ is generated by $\{A_Q(t)\mid\ t\geq 0\}$ with regularity space $\{\tilde{Y_t}\mid t\geq 0\}$. 
\par
$(ii)$ Conversely, assume that $\A_Q$ generates the evolution family $\W$ with some regularity spaces $\{\tilde Y_t\mid \  t\geq 0\}.$ Since $Q^{-1}$ is $C^1$-strongly continuous we obtain by $(i)$ that the family $(\A_Q)_{Q^{-1}}=\A$ generates the evolution $\mathcal V$ defined by  \[V(t,s):=Q(t)W(t,s) Q^{-1}(s)=U(t,s), \quad (t,s)\in \Delta\]
with regularity space $Y_t=Q(t)\tilde Y_t.$ This completes the proof.
\end{proof}
 If $A: D(A)\subset X \lra X$ is the generator of a $C_0$-semigroup and $B\in\L(X)$, then the perturbed operator $\tilde A:=A+B$ is again the generator of a $C_0$-semigroup, see e.g., \cite[Section 1.2]{EnNa00} or \cite{Pa83}. This perturbation results fails to be true in general for non-autonomous evolution equations \cite[Example 9.2]{EnNa00}. Thus one cannot conclude from Proposition \ref{Prop Similar EVF} that the family $\{Q^{-1}(t)A(t)Q(t) \mid t\geq 0\}$  generates an evolution family. Nevertheless, inspired by an idea of Schnaubelt and Weiss \cite{SchWei10},  using Proposition \ref{Prop Similar EVF} we show that a positive answer can be given under some additional regularity assumptions.


%
For this we first need  to introduce the following definition.
\begin{definition}(Kato's class) \begin{enumerate}
\item A family $\A$ is said to be \textit{Kato-stable} if  for each $t\geq 0$ there exists a norm $\|\cdot \|_t$ on $X$ equivalent to the original one such that  for each $T\geq 0$ there exists a constant $c_T\geq 0$ with
\begin{equation}\label{stability test condition}
|\|x\|_t-\|x\|_s|\leq c_T|t-s|\|x\|_s, \quad x\in X, t,s\in [0,T ]
\end{equation}
and $A(t)$ generates a contractive $C_0$-semigroup  on $X_t:=(X, \|\cdot\|_t)$ for all $t\geq 0$. 
\item A family $\A$ is said to belong to \textit{Kato's class} if it is Kato-stable and the operators $A(t)$, $t\geq 0$, have a common domain $D$ such that $A(\cdot):[0,\infty)\lra \L(D,X)$ is strongly $C^1$-differentiable.
\end{enumerate}
\end{definition}
 It is known that  Kato-stability  is a sufficient condition for  $C^1$-well posedness of (hyperbolic) non-autonomous evolution equations \cite{Ki64, Pa83,Ta79}. In particular, each non-autonomous evolution equation that is governed by a Kato-class family is $C^1$-well posed.
\medskip

\par\noindent Obviously, $\A$ is Kato-stable if each operator $A(t)$ generates a contractive  $C_0$-semigroup, as one can simply choose  $\|\cdot\|_t=\|\cdot\|$, $t\geq 0$. In this case we  say that $\A$ belongs to the \textit{elementary Kato class}. Starting from this simple case many less trivial  Kato-stable families can be constructed.
\begin{example}\label{main Example Kato's class} Assume that $(H,\|\cdot\|_H)$ is a Hilbert space. 
Let $M:[0,\infty)\lra \L(H)$ be \textit{self-adjoint and uniformly coercive}, i.e., $M(t)^*=M(t)$ and $(M(t)x|x)_H\geq \beta\|x\|^2_H$ for some constant $\beta>0$ and all $t\geq 0$. If $M$ is strongly $C^1$-continuous and $M^{-1}$ is strongly continuous, then for each $t\in[0,\infty)$ the function 
\begin{equation}\label{Eq Equivalent norm}x\mapsto\|x\|_{t}:=\sqrt{(M(t)x|x)}=\|M^{1/2}(t)x\|
\end{equation}
defines a norm on $H$ which is equivalent  to the norm $\|\cdot\|_H$ and satisfies \eqref{stability test condition}. Moreover, if $\A$ has a common domain $D$ and for each $t\geq 0$ the operator $(A(t),D)$ generates a contraction $C_0$-semigroup in $H$, then  $(A(t)M(t), D(A(t)M(t))$ and $(M(t)A, D(A(t)))$ generate  contractive $C_0$-semigroups on $H_t$, and thus both families $\{A(t)M(t)\mid t\geq 0\}$ and  $\{M(t)A(t)\mid t\geq 0\}$ are Kato-stable. We refer to \cite[Lemma 7.2.3]{Jac-Zwa12} and to the proof of \cite[Proposition 2.3]{SchWei10} for precise details. Finally, if $P:[0,\infty)\lra \L(X)$ is a locally uniformly bounded function, then $\{M(t)A(t)+P(t)\mid t\geq 0\}$ and $\{A(t)M(t)+P(t)\mid t\geq 0\}$ are Kato-stable \cite[Propositions 4.3.2 and 4.3.3]{Ta79}.
\end{example}
\begin{proposition}\label{main proposition 2} Let $\A$ belong to the Kato-class and let  $D$ denote the common domain of $A(t)$, $t\geq 0$. Assume that $Q(\cdot)$ is strongly $C^2$-continuous. Then $\{Q^{-1}(t)A(t)Q(t)\mid \ t\geq 0\}$ generates a unique evolution family $\W$ with regularity spaces $Y_t= Q{{-1}}(t)D$, $t\geq 0$. Moreover,  for each $F\in C^1([0,\infty); X)$ and $x_s\in Q{^{-1}}(s)D$ the inhomogeneous non-autonomous Cauchy problem 
\begin{equation}\label{InhNonautonomous InitialAbstValueProblemIntermediat}
\dot x(t)=Q^{-1}(t)A(t)Q(t)x(t)+F(t)\  \ {\rm   a.e.\  on  } \ [s,\infty),\ x(s)=x_s, s>0,
\end{equation}
has a unique classical solution given by 
\begin{equation}\label{VariContFormula}
x(t)=W(t,s)x_s+\int_s^t W(t,r)F(r) \d r \quad t\geq s.
\end{equation}
\end{proposition}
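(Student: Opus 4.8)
The plan is to reduce the statement to Proposition \ref{Prop Similar EVF} by first producing an evolution family generated by the (unperturbed) Kato-class family $\A$, then transporting it via the similarity transform, and finally adding the inhomogeneous term through a variation-of-constants argument. First I would invoke the fact, recalled right after the definition of Kato's class, that a family $\A$ in Kato's class generates a unique evolution family $\U=\{U(t,s)\}$ on $X$, with regularity spaces $Y_t=D$ (the common domain) for all $t\ge 0$; this is the classical Kato theory for hyperbolic non-autonomous Cauchy problems \cite{Ki64,Pa83,Ta79}. Since $Q(\cdot)$ is strongly $C^2$-continuous, it is in particular strongly $C^1$-differentiable, so Proposition \ref{Prop Similar EVF} applies: the family $\A_Q=\{Q^{-1}(t)A(t)Q(t)-Q^{-1}(t)\dot Q(t)\mid t\ge 0\}$ generates the similar evolution family $\W=\{W(t,s)\}$ with $W(t,s)=Q^{-1}(t)U(t,s)Q(s)$ and regularity spaces $\tilde Y_t=Q^{-1}(t)D$.

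The next step is to absorb the extra term $Q^{-1}(t)\dot Q(t)$ so as to pass from $\A_Q$ to $\{Q^{-1}(t)A(t)Q(t)\mid t\ge 0\}$. Here I would argue that $P(t):=Q^{-1}(t)\dot Q(t)$ is a locally uniformly bounded, strongly continuous family in $\L(X)$ — boundedness on compact time intervals follows from strong continuity of $Q^{-1}$ and $\dot Q$ together with the uniform boundedness principle. Bounded perturbation of evolution families generated by Kato-class families again yields a (uniquely determined) evolution family: this is exactly the content of the last sentence of Example \ref{main Example Kato's class} (via \cite[Propositions 4.3.2, 4.3.3]{Ta79}), applied to the Kato-stable family $\A_Q$. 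Concretely, $\A_Q + P = \{Q^{-1}(t)A(t)Q(t)\mid t\ge 0\}$, and since adding a locally bounded strongly continuous family preserves both Kato-stability and the common-domain/strong-$C^1$ structure, $\{Q^{-1}(t)A(t)Q(t)\mid t\ge 0\}$ belongs to Kato's class and hence generates a unique evolution family; I rename it $\W$ and note its regularity spaces are still $Y_t=Q^{-1}(t)D$, since the domains of $Q^{-1}(t)A(t)Q(t)$ and $Q^{-1}(t)A(t)Q(t)-P(t)$ coincide.

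For the inhomogeneous part, fix $F\in C^1([0,\infty);X)$, $s>0$ and $x_s\in Q^{-1}(s)D$, and set $x(t):=W(t,s)x_s+\int_s^t W(t,r)F(r)\,\d r$. That this $x$ is a classical solution of \eqref{InhNonautonomous InitialAbstValueProblemIntermediat} is the standard variation-of-constants result for $C^1$-well-posed non-autonomous Cauchy problems whose generator family lies in Kato's class: for $F\in C^1$ the convolution term is differentiable, lies in $D(Q^{-1}(t)A(t)Q(t))$ for each $t$, and its derivative produces the $Q^{-1}(t)A(t)Q(t)\int_s^t W(t,r)F(r)\,\d r + F(t)$ term (see e.g. \cite[Section VI.9]{EnNa00} or \cite[Ch.~5]{Pa83}). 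Uniqueness follows as usual: the difference of two classical solutions solves the homogeneous problem with zero initial value, and by the generation property of $\W$ (part $(iii)$ of Definition \ref{Definition EVF}) it must vanish. I would spell out only the differentiation of the convolution term in detail, since the rest is bookkeeping.

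The main obstacle I anticipate is the bounded-perturbation step: one must be careful that passing from $\A_Q$ to $\A_Q+P$ genuinely stays inside Kato's class — i.e. that the perturbed family still has a common domain ($D(Q^{-1}(t)A(t)Q(t))$ does not depend on $t$ in the sense needed, because it equals $\{x: Q(t)x\in D\}$, which is $t$-dependent as a subset of $X$) and is still strongly $C^1$ into $\L$ of that domain. The cleanest route is to not re-enter Kato's class abstractly but to apply the perturbation theorem for evolution families directly to $\W$ with the bounded strongly continuous perturbation $P(\cdot)$, reading off the regularity spaces from the identity $D(Q^{-1}(t)A(t)Q(t))=D(A(t)Q(t))=Q^{-1}(t)D$ established in \eqref{LemmaSimilarEVFProofEq0}. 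I would present the argument in that order to sidestep the domain-tracking subtlety.
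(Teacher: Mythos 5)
Your first two steps (Kato's class generates a unique evolution family $\U$ with regularity space $D$, and Proposition \ref{Prop Similar EVF} then shows that $\A_Q=\{Q^{-1}(t)A(t)Q(t)-Q^{-1}(t)\dot Q(t)\}$ generates $\W=Q^{-1}(\cdot)\U Q(\cdot)$) are fine, but the decisive step — absorbing the bounded term $P(t)=Q^{-1}(t)\dot Q(t)$ to pass from $\A_Q$ to $\{Q^{-1}(t)A(t)Q(t)\}$ — is a genuine gap. There is no bounded-perturbation theorem for evolution families: the paper itself stresses (citing \cite[Example 9.2]{EnNa00}) that adding a bounded, strongly continuous family to the generator family of an evolution family need not yield a generator, and that precisely for this reason one \emph{cannot} conclude from Proposition \ref{Prop Similar EVF} that $\{Q^{-1}(t)A(t)Q(t)\}$ generates. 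Your fallback, ``apply the perturbation theorem for evolution families directly to $\W$,'' is exactly the nonexistent theorem. The alternative you sketch, re-entering Kato's class via \cite[Propositions 4.3.2, 4.3.3]{Ta79}, also does not apply to $\A_Q$ as stated: those results perturb a \emph{Kato-stable} family, and membership of $\A_Q$ in Kato's class is not available — in particular the domains $D(Q^{-1}(t)A(t)Q(t))=Q^{-1}(t)D$ are genuinely time-dependent, so the common-domain and strong-$C^1$ hypotheses of the hyperbolic generation/perturbation theory fail, as you yourself note. The same issue undermines your treatment of the inhomogeneity, since the ``standard'' variation-of-constants theorem you invoke is proved under Kato-type hypotheses that $\{Q^{-1}(t)A(t)Q(t)\}$ has not been shown to satisfy.

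The fix is to reverse the order of the two operations, which is what the paper does: perturb \emph{before} transforming. The family $\{A(t)+\dot Q(t)Q^{-1}(t)\}$ is a bounded, strongly continuous perturbation of the Kato-class family $\A$ (common domain $D$, stability via \eqref{stability test condition}), so \cite[Propositions 4.3.2, 4.3.3 and Corollary of Theorem 4.4.2]{Ta79} give a unique evolution family $\U$. Applying Proposition \ref{Prop Similar EVF} to \emph{this} $\U$ produces the generator family
\begin{equation*}
Q^{-1}(t)\bigl[A(t)+\dot Q(t)Q^{-1}(t)\bigr]Q(t)-Q^{-1}(t)\dot Q(t)=Q^{-1}(t)A(t)Q(t),
\end{equation*}
so the similarity correction term cancels the added perturbation exactly, and no perturbation of an evolution family is ever needed. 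The inhomogeneous problem is then solved in the transformed variables $u=Q(\cdot)x$, where \cite[Theorem 4.5.3]{Ta79} applies to $\{A(t)+\dot Q(t)Q^{-1}(t)\}$ with inhomogeneity $Q(t)F(t)$, and the solution is pulled back via $x=Q^{-1}(\cdot)u$, yielding \eqref{VariContFormula}; uniqueness transfers the same way. As written, your argument does not close this gap.
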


\begin{proof} It is not difficult to verify that \eqref{stability test condition} implies that   $\|x\|_t\leq e^{c_T|t-s|}\|x\|_s$ for all $x\in X$, $t,s\in [0,T]$ and $T>0$. Using \cite[Propositions 4.3.2 and 4.3.3]{Ta79} and \cite[Corollary of Theorem 4.4.2]{Ta79} we obtain that $\{A(t)+\dot Q(t)Q^{-1}(t)\mid t\geq 0\}$ generates a unique evolution family $\U$ on $X$. Thus the first assertion follows from Proposition \ref{Prop Similar EVF}.  Next, let $F\in C^1([0,\infty); X)$ and $x_s\in Q(s)D$. By \cite[Theorem 4.5.3]{Ta79} the inhomogeneous Cauchy problem 
\begin{align}\label{intermediat problem Inh}
\dot u(t)&=A(t)u(t)+\dot Q(t)Q^{-1}(t)u(t)+Q(t)F(t)\  \ {\rm   a.e.\  on  } \ [s,\infty),\\ u(s)&=Q^{-1}(s)x_s, s>0.
\end{align}
has a unique classical solution $x$ given by 
\begin{equation}\label{VariContFormula2}
u(t)=U(t,s)Q^{-1}(s)x_s+\int_s^t U(t,r)Q(r)F(r) \d r \quad t\geq s.
\end{equation}

\par\noindent On the other hand,  arguing as in the proof of Proposition \ref{Prop Similar EVF} we see that  $x:=Q^{-1}(\cdot)u$ is a classical solution of \eqref{InhNonautonomous InitialAbstValueProblemIntermediat}. The uniqueness of classical solutions of \eqref{InhNonautonomous InitialAbstValueProblemIntermediat} follows from the uniqueness of classical solutions of \eqref{intermediat problem Inh}. Finally, \eqref{VariContFormula} follows from  \eqref{VariContFormula2} and \eqref{SimilarEVFDef}.
\end{proof}
Using Example \ref{main Example Kato's class} and Proposition \ref{main proposition 2} one can formulate the following two corollaries.
\begin{corollary}\label{main corollary Abstract} Assume that $X$ is a Hilbert space. Assume that $\A$ belongs to the elementary Kato class and denote by $D$ the common of $A(t)$, $t\geq 0$. Let  $M:[0,\infty)\lra \L(X)$  and $P:[0,\infty)\lra \L(X)$ be self-adjoint and uniformly coercive such that $M$  is strongly $C^2$-continuous while $P$ is strongly $C^1$-differentiable. Then $\{A(t)M(t)+P(t)\mid \ t\geq 0\}$ generates a unique evolution family $\W$ with regularity spaces $Y_t= {M^{-1}}(t)D$, $t\geq 0$. Moreover,  for each $F\in C^1([0,\infty); X)$ and $x_s\in { M^{-1}}(s)D$ the inhomogeneous non-autonomous Cauchy problem 
\begin{equation}\label{InhNonautonomous InitialAbstValueProblemIntermediat2}
\dot x(t)=A(t)M(t)x(t)+P(t)x(t)+F(t)\  \ {\rm   a.e.\  on  } \ [s,\infty),\ x(s)=x_s, s>0.
\end{equation}
has a unique classical solution given by \eqref{VariContFormula}.
\end{corollary}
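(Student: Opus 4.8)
The plan is to deduce Corollary~\ref{main corollary Abstract} from Proposition~\ref{main proposition 2} by an appropriate choice of the conjugating operators $Q(t)$ and an identification of the perturbation term $P(t)$ within the conjugated operator. First I would set $Q(t):=M^{1/2}(t)$. Since $M$ is self-adjoint and uniformly coercive with $M$ strongly $C^2$-continuous and $M^{-1}$ strongly continuous (the latter being automatic from coercivity and boundedness, by a Neumann-series argument), the square root $M^{1/2}(t)$ is a self-adjoint isomorphism on $X$, and one has to check that $t\mapsto M^{1/2}(t)$ is strongly $C^2$-continuous and $t\mapsto M^{-1/2}(t)$ is strongly continuous. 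This regularity of the square root is the first technical point; it follows from the holomorphic functional calculus representation $M^{1/2}(t)=\frac{1}{2\pi i}\int_\Gamma \lambda^{1/2}(\lambda-M(t))^{-1}\,d\lambda$ over a fixed contour $\Gamma$ separating $0$ from $\sigma(M(t))$ (such a $\Gamma$ can be chosen uniformly on compact $t$-intervals thanks to uniform coercivity and local boundedness of $M$), together with differentiation under the integral sign.

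Next I would verify that $\A$ belongs to the Kato class in the sense required by Proposition~\ref{main proposition 2}: since $\A$ is in the elementary Kato class, each $A(t)$ generates a contractive $C_0$-semigroup on $X$ with the common domain $D$, so it is Kato-stable with $\|\cdot\|_t=\|\cdot\|$, and the strong $C^1$-differentiability of $A(\cdot):[0,\infty)\to\L(D,X)$ is part of the elementary Kato class hypothesis. Applying Proposition~\ref{main proposition 2} with this $Q$, we obtain that $\{Q^{-1}(t)A(t)Q(t)\mid t\ge 0\}=\{M^{-1/2}(t)A(t)M^{1/2}(t)\mid t\ge 0\}$ generates a unique evolution family $\W$ with regularity spaces $Q^{-1}(t)D=M^{-1/2}(t)D$, and the inhomogeneous problem with that generator has the claimed variation-of-constants solution.

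The remaining task — and I expect this to be the main obstacle — is to bridge the gap between the operator $M^{-1/2}(t)A(t)M^{1/2}(t)$ that Proposition~\ref{main proposition 2} produces and the operator $A(t)M(t)+P(t)$ appearing in the corollary, and correspondingly to translate the Cauchy problem \eqref{InhNonautonomous InitialAbstValueProblemIntermediat2} on $X$ into one governed by the conjugated operator. The clean way is a further similarity/unitary transformation: the map $x\mapsto M^{1/2}(t)x$ sends \eqref{InhNonautonomous InitialAbstValueProblemIntermediat2} into a problem for $v(t):=M^{1/2}(t)x(t)$, and the identity $M^{1/2}(t)\big(A(t)M(t)+P(t)\big)M^{-1/2}(t)=M^{1/2}(t)A(t)M^{1/2}(t)+M^{1/2}(t)P(t)M^{-1/2}(t)$ exhibits the generator as a conjugate of $A(t)$ plus a bounded, locally uniformly bounded, strongly $C^1$ perturbation; here one uses that $P$ self-adjoint coercive and strongly $C^1$ with $M^{\pm1/2}$ strongly continuous (resp. $C^1$) makes $t\mapsto M^{1/2}(t)P(t)M^{-1/2}(t)$ locally bounded and strongly $C^1$. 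By Example~\ref{main Example Kato's class} (the bounded-perturbation statement at its end, via \cite[Propositions 4.3.2 and 4.3.3]{Ta79}) the family $\{M^{1/2}(t)A(t)M^{1/2}(t)+M^{1/2}(t)P(t)M^{-1/2}(t)\mid t\ge0\}$ is again Kato-stable with common domain $M^{-1/2}(t)D$, so Proposition~\ref{main proposition 2} (or rather its proof, applied with $Q(t)=M^{-1/2}(t)$ to this new Kato-class family) yields the evolution family $\W$ on $X$, its regularity spaces $M^{-1/2}(t)D$, and the variation-of-constants formula \eqref{VariContFormula} for \eqref{InhNonautonomous InitialAbstValueProblemIntermediat2}. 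The delicate bookkeeping is making sure every regularity hypothesis of Proposition~\ref{main proposition 2} is met after these two successive conjugations — in particular that replacing $Q$ by $M^{-1/2}$ is legitimate (strong $C^2$-continuity of $M^{-1/2}$) and that the perturbation $P$ does not spoil the common domain or the $C^1$-dependence — after which the statement follows by simply unwinding the substitution $x=M^{-1/2}v$ and invoking the uniqueness already established.
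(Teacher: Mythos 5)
Your plan breaks down at the decisive step, the second conjugation. You want to apply Proposition \ref{main proposition 2} (equivalently Tanabe's generation and perturbation theorems invoked through Example \ref{main Example Kato's class}) to the family $\tilde A(t):=M^{1/2}(t)A(t)M^{1/2}(t)+M^{1/2}(t)P(t)M^{-1/2}(t)$. But $D(\tilde A(t))=\{x\in X\mid M^{1/2}(t)x\in D\}=M^{-1/2}(t)D$ is time-dependent: right multiplication by $M^{1/2}(t)$ moves the domain, so this family is \emph{not} in Kato's class as defined in the paper (Kato-stability \emph{plus} a common domain with strong $C^1$-dependence in $\L(D,X)$), and calling $M^{-1/2}(t)D$ a ``common domain'' is a contradiction in terms. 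Proposition \ref{main proposition 2} exists precisely to handle conjugates of a \emph{fixed-domain} Kato-class family, so you cannot feed it a family whose domain already varies; the bounded-perturbation results of Tanabe quoted in Example \ref{main Example Kato's class} do not rescue this either, since the unperturbed part $M^{1/2}(t)A(t)M^{1/2}(t)$ is the culprit. There are also two smaller slips: in the final application the conjugating operator must be $Q(t)=M^{1/2}(t)$, not $M^{-1/2}(t)$ (with $M^{-1/2}$ you would produce $M(t)A(t)+M(t)P(t)M^{-1}(t)$ rather than $A(t)M(t)+P(t)$), and the regularity spaces must come out as $Y_t=M^{-1}(t)D$, as in the statement of Corollary \ref{main corollary Abstract}, not $M^{-1/2}(t)D$. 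Your first step (conjugating $A(t)$ alone by $M^{1/2}(t)$, which yields $M^{-1/2}(t)A(t)M^{1/2}(t)$) is a detour that never connects to the target operator.

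The paper's proof avoids all of this and needs no square roots: apply Proposition \ref{main proposition 2} with the inner family $\tilde A(t):=M(t)A(t)+M(t)P(t)M^{-1}(t)$ and $Q(t):=M(t)$. Here the unbounded factor $A(t)$ stands on the right and all time-dependent factors multiply on the left, so $D(\tilde A(t))=D$ is genuinely common; Example \ref{main Example Kato's class} gives Kato-stability of $\{M(t)A(t)\}$ with respect to the equivalent norms $\|M^{1/2}(t)\cdot\|$, the bounded summand $M(t)P(t)M^{-1}(t)$ is absorbed by Tanabe's perturbation results, and then $Q^{-1}(t)\tilde A(t)Q(t)=A(t)M(t)+P(t)$ with regularity spaces $Q^{-1}(t)D=M^{-1}(t)D$, which is exactly the claim, including the variation-of-constants formula for \eqref{InhNonautonomous InitialAbstValueProblemIntermediat2}. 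If you want to salvage your route, you would have to either prove a version of Proposition \ref{main proposition 2} allowing time-dependent domains (essentially Kato's 1970 hyperbolic theory) or, more simply, adopt the left-multiplied family as above; the additional work you invest in the regularity of $t\mapsto M^{1/2}(t)$ then becomes unnecessary.
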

\begin{proof} For the proof we just have to apply Proposition \ref{main proposition 2} for $M(t)A(t)+M(t)P(t)M^{-1}(t)$ instead of $A(t)$ and $M(t)$ instead of $Q(t)$.
\end{proof}
\begin{corollary}\label{main corollary Abstract2} Let $X$ be a Hilbert space and let $(A,D(A))$ be generator of a contractive $C_0$-semigroup on $X$.  Let  $M:[0,\infty)\lra \L(X)$  and $P:[0,\infty)\lra \L(X)$ be as in Corollary \ref{main corollary Abstract}. Further, let $R:[0,\infty)\lra \L(X)$ be self-adjoint and uniformly coercive such that $R$ is strongly $C^1$-continuous and commute with $M$  i.e. 
\begin{equation}
R(t)M(t)=M(t)R(t)\quad\text{ for all} \ t\geq 0.
\end{equation} 
Then the family $\{R(t)AM(t)+P(t)\mid \ t\geq 0\}$ generates a unique evolution family $\W$ with regularity spaces $Y_t= {M^{-1}}(t)D(A)$, $t\geq 0.$ Moreover,  for each $F\in C^1([0,\infty); X)$ and $x_s\in { M^{-1}}(s)D(A)$ the inhomogeneous non-autonomous Cauchy problem 
\begin{equation}\label{InhNonautonomous InitialAbstValueProblemIntermediat2}
\dot x(t)=R(t)AM(t)x(t)+P(t)x(t)+F(t)\  \ {\rm   a.e.\  on  } \ [s,\infty),\ x(s)=x_s, s>0.
\end{equation}
has a unique classical solution given by \eqref{VariContFormula}.
\end{corollary}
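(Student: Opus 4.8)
The plan is to reduce the statement to Proposition~\ref{main proposition 2}, following the scheme used in the proof of Corollary~\ref{main corollary Abstract}. I would take $Q(t):=M(t)$ as the similarity transform: being self-adjoint and uniformly coercive it is a family of isomorphisms, $Q^{-1}(t)=M^{-1}(t)$ is strongly continuous, and $Q$ is strongly $C^2$-continuous by hypothesis. As the family to be conjugated I would take
\begin{equation*}
 \widetilde{\A}:=\bigl\{\,M(t)R(t)A+M(t)P(t)M^{-1}(t)\ \big|\ t\ge 0\,\bigr\},
\end{equation*}
each operator carrying the domain $D(A)$. The algebraic key is that the bounded factors cancel,
\begin{equation*}
 Q^{-1}(t)\bigl(M(t)R(t)A+M(t)P(t)M^{-1}(t)\bigr)Q(t)=R(t)AM(t)+P(t)\qquad\text{on }M^{-1}(t)D(A).
\end{equation*}
Hence, once $\widetilde{\A}$ is shown to belong to Kato's class with common domain $D(A)$, Proposition~\ref{main proposition 2} gives at once that $\{R(t)AM(t)+P(t)\mid t\ge 0\}$ generates a unique evolution family $\W$ with regularity spaces $Y_t=Q^{-1}(t)D(A)=M^{-1}(t)D(A)$, together with the variation-of-constants formula \eqref{VariContFormula} for the inhomogeneous Cauchy problem of the statement (with $F\in C^1([0,\infty);X)$ and $x_s\in M^{-1}(s)D(A)$).

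The single step that really uses the commutativity assumption $R(t)M(t)=M(t)R(t)$ is the verification that $N(t):=M(t)R(t)$ is again self-adjoint and uniformly coercive. Self-adjointness is immediate from $N(t)^*=R(t)^*M(t)^*=R(t)M(t)=M(t)R(t)=N(t)$. For coercivity, commuting with $M(t)$ the operator $R(t)$ also commutes with $M^{1/2}(t)$, so $N(t)=M^{1/2}(t)R(t)M^{1/2}(t)$ and
\begin{equation*}
 (N(t)x\mid x)_X=\bigl(R(t)M^{1/2}(t)x\bigm| M^{1/2}(t)x\bigr)_X\ge \beta_R\,\|M^{1/2}(t)x\|_X^2\ge \beta_R\beta_M\,\|x\|_X^2 ,
\end{equation*}
where $\beta_R,\beta_M>0$ are coercivity constants for $R$ and $M$. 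Moreover $N$ is strongly $C^1$-continuous, as a product of the strongly $C^1$ maps $M$ and $R$, and $N^{-1}=R^{-1}M^{-1}$ is strongly continuous.

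The remaining verifications are routine. By Example~\ref{main Example Kato's class}, applied with the coercive multiplier $N(t)=M(t)R(t)$ and the fixed generator $A$ of a contractive $C_0$-semigroup, the family $\{M(t)R(t)A\mid t\ge 0\}$ is Kato-stable, with equivalent norms $\|x\|_t:=\sqrt{(M(t)R(t)x\mid x)_X}$ and common domain $D(M(t)R(t)A)=D(A)$ (since $M(t)R(t)$ is a bounded isomorphism). Because $t\mapsto M(t)P(t)M^{-1}(t)$ is locally uniformly bounded, the last assertion of Example~\ref{main Example Kato's class} shows that $\widetilde{\A}$ is still Kato-stable with the same common domain $D(A)$. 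Finally, for every $x\in D(A)$ the map $t\mapsto M(t)R(t)(Ax)+M(t)P(t)M^{-1}(t)x$ is $C^1$ on $[0,\infty)$, being assembled from the strongly $C^1$ maps $M,R,P,M^{-1}$ applied to fixed vectors; thus $\widetilde{\A}(\cdot)\colon[0,\infty)\to\L(D(A),X)$ is strongly $C^1$-differentiable, and $\widetilde{\A}$ belongs to Kato's class. Proposition~\ref{main proposition 2}, applied with this $\widetilde{\A}$ and $Q=M$, then completes the proof.

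The only genuine obstacle is the content of the second paragraph: the factorization $M(t)R(t)=M^{1/2}(t)R(t)M^{1/2}(t)$ and the resulting self-adjointness and uniform coercivity of $M(t)R(t)$. Without the commutativity of $R$ and $M$ the operator $M(t)R(t)$ need not be self-adjoint (nor positive), so Example~\ref{main Example Kato's class} would no longer apply to $\{M(t)R(t)A\mid t\ge 0\}$. It is also worth noting why $M$ must be strongly $C^2$-continuous while $R$ only has to be strongly $C^1$-continuous: $M$ appears twice, once as the coercive multiplier inside the Kato-class family $\widetilde{\A}$ (where $C^1$ would suffice) and once as the similarity transform $Q$ of Proposition~\ref{main proposition 2} (which requires $C^2$), whereas $R$ enters only through $\widetilde{\A}$.
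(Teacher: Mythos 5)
Your proposal is correct and follows essentially the same route as the paper: apply Proposition \ref{main proposition 2} to the Kato-class family $\{M(t)R(t)A+M(t)P(t)M^{-1}(t)\mid t\ge 0\}$ with $Q=M$, the commutativity $R(t)M(t)=M(t)R(t)$ serving exactly to make $M(\cdot)R(\cdot)$ self-adjoint and uniformly coercive so that Example \ref{main Example Kato's class} applies. Your square-root factorization argument simply spells out the coercivity step that the paper states without detail.
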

\begin{proof}From Example \ref{main Example Kato's class} we deduce that the family $\{M(t)R(t)A\mid t\geq 0 \},$ and therefore $\{M(t)R(t)A+M(t)P(t)M^{-1}(t)\mid t\geq 0 \},$ belongs to Kato's class. In fact, using  \eqref{InhNonautonomous InitialAbstValueProblemIntermediat2} we see that $M(\cdot)R(\cdot):[0,\infty)\lra \L(X)$ is selfadjoint and uniformly coercive.  Now, applying Proposition \ref{main proposition 2} for $M(t)R(t)A+M(t)P(t)M^{-1}(t)$ instead of $A(t)$ and $M(t)$ instead of $Q(t)$ concludes the proof.
\end{proof}
\begin{remark} Corollary  \ref{main corollary Abstract} has been proved in \cite[Proposition 2.8-(a)]{SchWei10} using a slightly different method for $A(t)=A$ and $F=P=0$.\end{remark}
\subsection{Backward evolution families}
Let $X$ be a Hilbert space over $\K=\C$ or $\R$.
\begin{definition}\label{Backward EVF} A family $\V:=\{V(t,s)\mid \ (t,s)\in\Delta\}\subset \L(X)$ is called a \textit{backward evolution family}  if \begin{itemize}
		\item[$(i)$] $V(t,t)=I$ and $V(r,s)V(t,r)=V(t,s) $ for every $0\leq s\leq r\leq t$, 
		\item [$(ii)$]  $V(\cdot,\cdot): \Delta\lra \L(X)$ is strongly continuous.
		\end{itemize}
A family $A(t): D(A(t))\subset X\lra X$, $t\geq 0$, of linear operators generates a backward evolution equation $\V$ if  there is a family $\{Y_{t}\mid  t\geq 0\}$ of dense subspaces of $X$ with $Y_{t}\subset D(A(t))$ and 
\begin{equation}
V(t,s)Y_t \subset Y_s \text{ for all } \ 0\leq s\leq t,
\end{equation}
$V(t,\cdot)x_t\in C^1([0,t], X)$ for every $x_t\in Y_t$ and $V(t,\cdot)x_t$ solves uniquely the \textit{backward} non-autonomous problem 
\begin{align}
\label{BackwardCP}\dot u(s)=-A(s)u(s)\  \ \text{  on  } \ 0\leq s\leq t ,\ u(t)=x_t, (t>0).
\end{align}
\end{definition}

\begin{lemma}\label{Lemma Back-ForwardEVFRelation}
 \begin{enumerate} 
\item Assume that $\A=\{A(t)\mid t\geq 0\}$ belongs to the elementary Kato-class. Then $\A$  generates a backward evolution family.
\item Assume that $\A$ generates an evolution family $\U.$  If the adjoint operators $\A^*:=\{A^*(t) \mid t\geq 0\}$ generate a backward evolution family $\U_*:=\{U_*(t,s)\mid \ (t,s)\in\Delta\},$ then for $(t,s)\in\Delta$ we have 
\begin{equation}\label{Back-ForwardEVFRelation} U(t,s)=[U_*(t,s)]^\prime.\end{equation} 
\end{enumerate}
\end{lemma}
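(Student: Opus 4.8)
The two assertions are essentially independent; the plan is to treat them separately. For (1) the idea is to deduce the backward statement from the forward existence theory by reversing time separately around each terminal time. Fix $T>0$ and set $B^T(r):=A(T-r)$ for $r\in[0,T]$. Since each $B^T(r)$ equals some $A(\sigma)$ with $\sigma\in[0,T]$, it generates a contractive $C_0$-semigroup on $(X,\|\cdot\|)$, so $\{B^T(r)\mid r\in[0,T]\}$ again lies in the elementary Kato class; by the existence results for Kato-stable families recalled after the definition of Kato's class (\cite{Ki64,Pa83,Ta79}) it therefore generates a forward evolution family $\{U^T(r,q)\mid T\geq r\geq q\geq 0\}$ with regularity spaces $\{\widetilde Y^T_r\}$. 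I would then put $V^T(t,s):=U^T(T-s,T-t)$ for $0\leq s\leq t\leq T$ and check, via the substitution $\sigma=T-r$, that $\sigma\mapsto V^T(t,\sigma)x_t$ turns the forward problem $\dot w(r)=B^T(r)w(r)$, $w(T-t)=x_t$, into the backward problem $\dot u(\sigma)=-A(\sigma)u(\sigma)$, $u(t)=x_t$; hence $V^T(t,\cdot)x_t$ is its unique classical solution on $[0,t]$ for every $x_t\in Y^T_t:=\widetilde Y^T_{T-t}$. Uniqueness of the forward solution forces $V^T(t,s)$ to be independent of $T\geq t$, so the $V^T$ glue to a single family $\V=\{V(t,s)\mid (t,s)\in\Delta\}$, and properties $(i)$, $(ii)$ of Definition~\ref{Backward EVF} together with the inclusion $V(t,s)Y_t\subset Y_s$ transfer from the $U^T$ under the same substitution. (Alternatively one may quote the backward evolution operator construction directly from \cite{Ta79}.)

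For (2) the plan is a bilinear-form argument. Let $\{Y_t\}$ denote the regularity spaces of $\U$ and $\{Z_t\}$ those of the backward family $\U_*$ generated by $\A^*$. Fix $(t,s)\in\Delta$, $x\in Y_s$ and $y\in Z_t$, and set $\phi(r):=\big(U(r,s)x\mid U_*(t,r)y\big)$ for $r\in[s,t]$. By the generation property of $\U$, the map $r\mapsto U(r,s)x$ is continuous on $[s,t]$, is $C^1$ on $(s,t)$, satisfies $U(r,s)x\in D(A(r))$ and $\frac{d}{dr}U(r,s)x=A(r)U(r,s)x$; by Definition~\ref{Backward EVF}, $r\mapsto U_*(t,r)y$ is $C^1$ on $[0,t]$, satisfies $U_*(t,r)y\in Z_r\subset D(A^*(r))$ and $\frac{d}{dr}U_*(t,r)y=-A^*(r)U_*(t,r)y$. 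Hence $\phi$ is continuous on $[s,t]$ and differentiable on $(s,t)$, where
\[ \phi'(r)=\big(A(r)U(r,s)x\mid U_*(t,r)y\big)-\big(U(r,s)x\mid A^*(r)U_*(t,r)y\big)=0 \]
by the definition of the adjoint $A^*(r)$ (applicable since both arguments lie in the relevant domains). So $\phi$ is constant; evaluating at $r=s$ and $r=t$ and using $U(s,s)=U_*(t,t)=I$ gives $\big(x\mid U_*(t,s)y\big)=\big(U(t,s)x\mid y\big)=\big(x\mid[U(t,s)]^\prime y\big)$. Since $Y_s$ is dense in $X$ this yields $U_*(t,s)y=[U(t,s)]^\prime y$ for every $y\in Z_t$, and since $Z_t$ is dense and both operators are bounded, $U_*(t,s)=[U(t,s)]^\prime$; taking adjoints gives $U(t,s)=[U_*(t,s)]^\prime$, which is \eqref{Back-ForwardEVFRelation}.

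The main obstacle lies in (1): one has to select the correct cited existence theorem and, above all, verify that $V^T(t,s)$ is independent of the auxiliary terminal time $T$ — this is precisely where uniqueness of classical solutions is used — and then transport all the structural properties through the time reversal. In (2) the computation is short; the points needing care are that $\phi$ is genuinely $C^1$ on $(s,t)$, which is guaranteed by the $C^1$-regularity built into the two generation notions, and that the adjoint identity $(A(r)\,\cdot\mid\cdot)=(\cdot\mid A^*(r)\,\cdot)$ is invoked only at those $r$ for which both arguments lie in $D(A(r))$ and $D(A^*(r))$, respectively. Overall I expect (1) to carry the heavier bookkeeping while (2) contains the actual idea.
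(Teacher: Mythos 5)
Your proposal is correct and follows essentially the same route as the paper: part (1) by time reversal, passing through the forward evolution family generated by $\{A(T-r)\}$ on $[0,T]$ and setting $V^T(t,s):=U^T(T-s,T-t)$, and part (2) by showing that $r\mapsto\big(U(r,s)x\mid U_*(t,r)y\big)$ is constant on $[s,t]$ and using density of the regularity spaces. The only difference is that you spell out the independence of $V^T(t,s)$ from the auxiliary terminal time $T$ via uniqueness of classical solutions, a point the paper's proof passes over with ``since $T$ is arbitrary.''
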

\begin{proof}
$(i)$ Let $T>0$ be fixed and set $\A_T:=\{A(T-t)\mid t\in[0,T]\}$. Then, obviously $\A_T$   belongs to the Kato-class and thus generates an evolution family $\U_T:=\{U_T(t,s)\mid 0\leq s\leq t\leq T\}$ \cite[Theorem 4.8]{Pa83} such that for each $x\in D$ and $0\leq s\leq t\leq T$
\begin {align}
\frac{\d }{\d t} U_T(t,s)x&=A_T(t)U_T(t,s)x,\\
\frac{\d}{\d s} U_T(t,s)x&=-U_T(t,s)A_T(s)x.
\end{align}
It is easy to see that $S(t,s):=U_T(T-s,T-t)$ for each $0\leq s\leq t\leq T$ defines a backward evolution family with generator $\{A(t)\mid t\in[0,T]\}$. This completes the proof since $T$ is arbitrary.
\par\noindent $(ii)$ Denote by $Y_t$ and $Y_{t,*}, t\geq 0$ the  regularity spaces corresponding to $\A$ and $\A^*$, repectively. Let $t>s\geq 0$ and let $x_s\in Y_{s}$ and $y_t\in Y_{t,*}$. Then  for $s\geq r\geq t$ we have
\begin{align*}\frac{\d}{\d r}( x_s\mid [U(r,s)]^{\prime}U_*(t,r)y_t)&=\frac{\d}{\d r}( U(r,s)x_s\mid U_*(t,r)y_t)
\\&=(A(r)U(r,s)x_s\mid U_*(t,r)y_t)-(U(r,s)x_s\mid A^*(r)U_*(t,r)y_t)\\&=0.
\end{align*}
Integrating over $[s,t]$ and using that $Y_s$ and $Y_{t,*}$ are dense in $X$ yield the desired identity.
\end{proof}

\section{Review on Autonomous boundary control and observation systems}\label{BCOAutobonous}
Many systems governed by  linear partial differential equations with inhomogeneous boundary conditions are  described by an  abstract boundary system of the form 
 \begin{align}
\label{StateEqBCS}\dot x(t)&=\frA x(t), \quad x(0)=x_0,\  t\geq 0,\\
\label{ConEqBCS}\frB x(t)&=u(t),\\
\label{ObsEqBCS}\frC  x(t)&=y(t).
\end{align}
Here $\frA: D(\frA)\subset X\lra X$ is a linear operator, $u(t)\in U$, $y(t)\in Y$, the \textit{boundary operators} $\frB: D(\frB)\subset X\lra U$ and $\frC: D(\frA)\subset X\lra Y$ are linear such that $D(\frA)\subset D(\frB)$, and  $X$, $U$ and $Y$ are complex Hilbert spaces. We shall call $X$ the \textit{state space}, $U$ the \textit{input space} and $Y$ the \textit{output space} of the system.
\par\noindent In this section, we  recall some well-known results on  \textit{well-posedness} of  these system which are needed throughout this paper.
 \begin{definition}\label{Definition: ClassSol and wellposedness Autonomous case} Let $x_0\in X$ and $u:[0,\infty)\ra U$ be given. 
\begin{itemize}
\item [$(i)$] $x$ is called a \textit{classical solution} of \eqref{StateEqBCS}-\eqref{ConEqBCS}, if $x\in C^1([0,\infty),X)$, $x(t)\in D(\frA )$ for all $t\geq 0$ and $x$ satisfies \eqref{StateEqBCS}-\eqref{ConEqBCS}.
	\item [$(ii)$] A pair $(x,y)$ is called a \textit{classical solution} of \eqref{StateEqBCS}-\eqref{ObsEqBCS}, if 
$x$ is a classical solution of \eqref{StateEqBCS}-\eqref{ConEqBCS}, $y\in C([0,\infty);Y)$ and $y$ satisfies \eqref{ObsEqBCS}.
\item [$(iii)$] The system $\Sigma(\frA,\frB,\frC)$ is called  \textit{well-posed}, if for any \textit{final time} $\tau>0$ there exists $m_\tau>0$ such that for all classical solution of  \eqref{StateEqBCS}-\eqref{ObsEqBCS} we have  
\begin{equation}\label{DefA: well posedness}
\|x(\tau)\|^2+\int_0^\tau \|y(s)\|^2ds\leq m_\tau \big( \|x(0)\|^2+\int_0^\tau \|u(s)\|^2 ds\big).
\end{equation}
\end{itemize}
\end{definition}
Remark that, if $\frC\in \L(D(\frA), Y)$, then $(x,y)$ is a classical solution of \eqref{StateEqBCS}-\eqref{ObsEqBCS} if and only if $x$ is a classical solution of \eqref{StateEqBCS}-\eqref{ConEqBCS}.
\subsection{Existence of classical solutions}\label{subsection1 BCOAutobonous}
In order to study existence of classical solutions it is often useful to write the boundary control system \eqref{StateEqBCS}-\eqref{ConEqBCS} as a $C^1$-well posed (inhomogeneous) autonomous Cauchy problem. We introduce the following definition which is based on Curtain and  Zwart \cite[Definition 3.3.2]{Cur-Zwa95}.

\begin{definition}\label{Def:BCOS} The linear (autonomous) system \eqref{StateEqBCS}-\eqref{ObsEqBCS} is called \textit{a boundary control and observation autonomous system}, and we write $\Sigma(\frA,\frB,\frC)$ is a BCO-system, if the following assertions hold:
\begin{itemize}
	\item [$(i)$] The operator $A:D(A)\subset X\lra X$, called \textit{the main operator}, defined by
		\begin{align*}D(A):&=D(\frA)\cap \ker(\frB )
		\\ A x:&=\frA  x \quad \text{for}\  x\in D(A)
		\end{align*}
		generates a strongly continuous semigroup on $X$.
   \item [$(ii)$] There exists a linear operator $\tilde B\in  \L(U,X)$ such that for all $u\in U$ we have \[\tilde Bu\in D(\frA), \frA \tilde B\in\L(U,X)\  \text{ and } \ \frB \tilde Bu=u.\]
   \item [$(iii)$] $\frC: D(\frA)\subset X\lra Y$ is a linear bounded operator,  where $D(\frA)$ is equipped with the graph norm.
\end{itemize}
\end{definition} 

\par\noindent In the following $\Sigma(\frA,\frB,\frC)$ is assumed to be  a BCO-system. The following remark will be very useful for non-autonomous boundary control systems. 

\begin{remark}\label{key remark} Let $\Sigma(\frA,\frB,\frC)$ be a BCO-system. Then for each $(x,u)\in X\times U$ we have
\begin{align*} x\in D(\frA)  \text{ and } \frB x=u &\Longleftrightarrow x-\tilde Bu\in D(A).\end{align*}
This is an easily consequence of Definition \ref{Def:BCOS}.
\end{remark}
We denote by $X_{-1}$ the extrapolation space associated to $A$, i.e., the completion of $X$ with respect to the norm $x\mapsto \|(\beta I-A)^{-1}x\|$ for some arbitrary $\beta\in\rho(A)$. Let $A_{-1}$ be the extension of $A$ to $X_{-1}$.  It is well known that $A_{-1}$ with domain $X$ generates a $C_0$-semigroup $(T_{-1}(t))_{t\geq 0}$ on $X_{-1}$ and  for all $t\geq 0$ the operator $T_{-1}(t)$ is the unique continuous extension of $T(t)$ to $X_{-1}$. 
 We associate with $\Sigma(\frA,\frB,\frC)$ the   linear operator $B\in \L(U,X_{-1})$ called  \textit{control operator} defined by 
\begin{equation}\label{control operator for BOC} 
B:=\frA\tilde B-A_{-1}\tilde B.
\end{equation}
It turns out, that for sufficiently smooth initial data and inputs the two Cauchy problems
\begin{align}
     \label{BCsDistributed2}\dot{w}(t) &=Aw(t) +\frA\tilde Bu(t)- \tilde B\dot u(t), \quad t\ge 0, \quad  w(0)=x_0-\tilde Bu(0),\\
     \label{BCsDistributed22}\dot{x}(t) &=Ax(t) +Bu(t), \quad t\ge 0,\quad   x(0)=x_0,
    \end{align}
and the BCO-system \eqref{StateEqBCS}-\eqref{ObsEqBCS} are equivalent. More precisely, we have 
 \begin{proposition}\label{Proposition: BCS-Standard CS}Let $(x_0,u)\in D(\frA)\times W^{2,2}([0,\infty);U)$ such that $\frB x_0=u(0)$. Then \eqref{StateEqBCS}-\eqref{ConEqBCS} has a unique classical solution $x$ given by
\begin{eqnarray}\label{classicalSolFormula}
x(t)&=&T(t)x_0+\int_{0}^t T_{-1}(t-s)Bu(s)ds \qquad t\geq 0,\\
\label{classicalSolFormula2} &=&T(t)(x_0-\tilde Bu(0))+\int_{0}^t T(t-s)\big[\frA\tilde Bu(s)-\tilde B\dot u(s)\big] ds +\tilde Bu(t)
\\\label{classicalSolOutputFormula} y(t)&=&\frC T(t)x_0+\frC\int_{0}^t T_{-1}(t-s)Bu(s)ds \qquad t\geq 0.
\end{eqnarray}
Therefore, $x$ is the unique classical solution of  \eqref{BCsDistributed22} and  $w:=x-\tilde u$ is the unique classical solution of \eqref{BCsDistributed2} with initial value $w_0=x_0-\tilde Bu(0)$.
 \end{proposition}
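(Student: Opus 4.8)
The plan is to verify that the function $x$ defined by \eqref{classicalSolFormula2} is a classical solution of \eqref{StateEqBCS}--\eqref{ConEqBCS}, then identify it with \eqref{classicalSolFormula} via the control operator $B$, and finally read off \eqref{classicalSolOutputFormula} by applying $\frC$. First I would introduce $w(t) := T(t)(x_0-\tilde Bu(0)) + \int_0^t T(t-s)\big[\frA\tilde Bu(s) - \tilde B\dot u(s)\big]\,\d s$, which is exactly the mild solution of the inhomogeneous abstract Cauchy problem \eqref{BCsDistributed2}. Since $(x_0,u)\in D(\frA)\times W^{2,2}([0,\infty);U)$, the forcing term $t\mapsto \frA\tilde Bu(t) - \tilde B\dot u(t)$ lies in $C([0,\infty);X)$ and is even $W^{1,1}_{\mathrm{loc}}$ (using $\frA\tilde B, \tilde B\in\L(U,X)$ together with $u\in W^{2,2}$), and the initial value $x_0 - \tilde Bu(0)$ lies in $D(A)$ by Remark \ref{key remark} (because $\frB x_0 = u(0)$). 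Hence by the standard theory of inhomogeneous Cauchy problems (e.g.\ \cite[Section 1.3]{Pa83} or \cite{EnNa00}), $w\in C^1([0,\infty);X)\cap C([0,\infty);D(A))$ and $\dot w(t) = Aw(t) + \frA\tilde Bu(t) - \tilde B\dot u(t)$ with $w(0)=x_0-\tilde Bu(0)$.

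Next I would set $x(t) := w(t) + \tilde Bu(t)$, which is \eqref{classicalSolFormula2}. Since $\tilde Bu(\cdot)\in C^1([0,\infty);X)$ (as $u\in W^{2,2}\subset C^1$ and $\tilde B\in\L(U,X)$) and takes values in $D(\frA)$ with $\frA\tilde Bu(\cdot)$ continuous, we get $x\in C^1([0,\infty);X)$, $x(t)\in D(\frA)$ for all $t$, and $\frB x(t) = \frB w(t) + \frB\tilde Bu(t) = 0 + u(t) = u(t)$, using $w(t)\in D(A)=D(\frA)\cap\ker\frB$ and $\frB\tilde Bu(t)=u(t)$. Differentiating, $\dot x(t) = \dot w(t) + \tilde B\dot u(t) = Aw(t) + \frA\tilde Bu(t) = \frA w(t) + \frA\tilde Bu(t) = \frA x(t)$, where I used that $A$ agrees with $\frA$ on $D(A)$. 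Also $x(0) = w(0) + \tilde Bu(0) = x_0$. So $x$ solves \eqref{StateEqBCS}--\eqref{ConEqBCS}; uniqueness follows from the uniqueness for the linear Cauchy problem \eqref{BCsDistributed2} (any classical solution $x$ yields $w=x-\tilde Bu$ solving \eqref{BCsDistributed2} by the same computation run backwards).

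To obtain \eqref{classicalSolFormula} I would rewrite the forcing term using the control operator. From \eqref{control operator for BOC}, $\frA\tilde B = B + A_{-1}\tilde B$, so $\frA\tilde Bu(s) - \tilde B\dot u(s) = Bu(s) + A_{-1}\tilde Bu(s) - \tilde B\dot u(s)$. Substituting into \eqref{classicalSolFormula2} and using the identity $\int_0^t T_{-1}(t-s)\big[A_{-1}\tilde Bu(s) - \tilde B\dot u(s)\big]\,\d s = T(t)\tilde Bu(0) - \tilde Bu(t)$ — which is just the variation-of-constants formula applied to the trivial problem $\dot v = A_{-1}v - (A_{-1}\tilde Bu - \tilde B\dot u)$ whose solution is $v(t) = \tilde Bu(t)$, or equivalently an integration by parts using $\frac{\d}{\d s}T_{-1}(t-s)\tilde Bu(s) = -T_{-1}(t-s)A_{-1}\tilde Bu(s) + T_{-1}(t-s)\tilde B\dot u(s)$ — the $T(t)\tilde Bu(0)$ and $-\tilde Bu(t)$ terms cancel against the corresponding terms in \eqref{classicalSolFormula2}, leaving precisely \eqref{classicalSolFormula}. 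Then \eqref{classicalSolOutputFormula} is immediate by applying $\frC$ to \eqref{classicalSolFormula}, which is legitimate since $\frC\in\L(D(\frA),Y)$ and $x(t)\in D(\frA)$; the last sentence of the statement then just records the equivalences already established.

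The main obstacle is the bookkeeping around the extrapolation space: one must be careful that $Bu(s)\in X_{-1}$ (not $X$) so the integral $\int_0^t T_{-1}(t-s)Bu(s)\,\d s$ a priori only lives in $X_{-1}$, and it is precisely the regularity of $u$ (namely $u\in W^{2,2}$) together with the smoothing through the cancellation identity above that brings it back into $X$ and in fact into $D(\frA)$. Apart from that, the proof is a careful but routine assembly of the inhomogeneous Cauchy problem theory and the definitions; no genuinely hard estimate is needed.
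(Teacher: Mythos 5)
Your proposal is correct and follows essentially the same route as the paper: the paper's proof simply cites \cite[Theorem 11.1.2]{Jac-Zwa12} (the Fattorini decomposition $x=w+\tilde Bu$ with $w$ solving \eqref{BCsDistributed2}) together with \cite[Corollary 10.1.4]{Jac-Zwa12} (the extrapolation-space integration-by-parts identity converting \eqref{classicalSolFormula2} into \eqref{classicalSolFormula}) and Remark \ref{key remark}, which are exactly the ingredients you prove by hand. Your explicit verification of the forcing-term regularity, the compatibility $x_0-\tilde Bu(0)\in D(A)$, and the cancellation identity matches the intended argument, so there is no gap.
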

    
    \begin{proof} The proof follows from a  combination of \cite[Theorem 11.1.2]{Jac-Zwa12} (see also \cite[Theorem 3.3.3]{Cur-Zwa95}) and \cite[Corollary 10.1.4]{Jac-Zwa12}  taking  Remark \ref{key remark} into account.
   \end{proof}
\subsection{Scattering passive BCO-systems}\label{Section: Scattering passive ABCOs} Let $\Sigma(\frA, \frB,\frC)$ be a BCO-system on $(X,U,Y)$ and let $P=P^*\in \L(X)$, $R=R^*\in \L(U)$ and $J=J^*\in \L(Y)$. The \textit{admissible space} $\V\subset X\times U$ is defined by
\begin{align*}\label{admissible space}\mathcal V:&=\left\{(x,u)\in X\times U\ \big|\  x\in D(\frA) \text{ and }  \frB x=u\right\}.
\end{align*}
\begin{definition}\label{Definition scattering passive autonomous} We say that  $\Sigma(\frA,\frB,\frC)$ is \textit{$(P,R,J)$-scattering passive}  if
\begin{equation}\label{Scatterin system: Definition1}
\frac{\d}{\d t} (Px(t)\mid x(t))_X\leq (Ru(t)\mid u(t))_U-(y(t)\mid Jy(t))_Y,\quad \text{ for all } t\geq 0
\end{equation}
and  all classical solutions $(x,y)$ of \eqref{StateEqBCS}-\eqref{ObsEqBCS}. Further, $\Sigma(\frA,\frB,\frC)$  is called  \textit{(R,P,J)-scattering energy preserving}  if equality holds in  \eqref{Scatterin system: Definition1}.  If $P=I, R=I$ and $J=I$, then we simply say that $\Sigma(\frA,\frB,\frC)$ is \textit{scattering passive} (or \textit{dissipative}).
\end{definition}
Each $(P,R,J)$-scattering passive boundary system $\Sigma(\frA,\frB,\frC)$ is well-posed if $P$ and $J$ are invertible. This can be seen by using Gronwall's Lemma (see the proof of Lemma \ref{sufficient condition Wellposedness NBCOs}). The following lemma characterizes $(P,R,J)$-scattering passive BCO-systems. A comparable results has been proved in \cite[Theorem 3.2, Proposition 5.2]{Ma-Sta-Wei06} for  \textit{systems nodes}.
\begin{lemma}\label{Prop: Characterization of passive BCO system} The BCO-system $\Sigma(\frA, \frB,\frC)$ is $(P,R,J)$-scattering passive if and only if for each $(x_0,u_0)\in \V$ we have 
\begin{equation}\label{Eq Characterization of passive BCO system}
2\Re (\frA x_0\mid P x_0)_X\leq (Ru_0\mid \frB x_0)_U-(\frC x_0\mid J \frC x_0)_Y
\end{equation}
or equivalently, 
\begin{equation}\label{Eq 2Characterization of passive BCO system}
2\Re (\frA x_0\mid P x_0)_X\leq (Ru_0\mid u_0)_U-(\frC x_0\mid J \frC x_0)_Y.
\end{equation}
 Then the  BCO-system  $\Sigma(\frA, \frB,\frC)$ is $(P,R,J)$-energy preserving if and only if equality holds in \eqref{Eq Characterization of passive BCO system}, or equivalently in  \eqref{Eq 2Characterization of passive BCO system}.
\end{lemma}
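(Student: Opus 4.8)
The plan is to prove the equivalence by relating the time derivative of $(Px(t)\mid x(t))_X$ along classical solutions to the pointwise inequality \eqref{Eq Characterization of passive BCO system}. First I would fix a classical solution $(x,y)$ of \eqref{StateEqBCS}-\eqref{ObsEqBCS}. Since $x\in C^1([0,\infty),X)$ with $x(t)\in D(\frA)$ and $\dot x(t)=\frA x(t)$, and since $P=P^*\in\L(X)$, the function $t\mapsto (Px(t)\mid x(t))_X$ is differentiable with
\begin{equation*}
\frac{\d}{\d t}(Px(t)\mid x(t))_X=(P\dot x(t)\mid x(t))_X+(Px(t)\mid \dot x(t))_X=2\Re(\frA x(t)\mid Px(t))_X.
\end{equation*}
Moreover $(x(t),u(t))\in\V$ for every $t\geq 0$ because $x(t)\in D(\frA)$ and $\frB x(t)=u(t)$, and $y(t)=\frC x(t)$. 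Hence the passivity inequality \eqref{Scatterin system: Definition1} at time $t$ reads exactly
\begin{equation*}
2\Re(\frA x(t)\mid Px(t))_X\leq (Ru(t)\mid u(t))_U-(\frC x(t)\mid J\frC x(t))_Y,
\end{equation*}
which, using $\frB x(t)=u(t)$, is \eqref{Eq 2Characterization of passive BCO system} (equivalently \eqref{Eq Characterization of passive BCO system}) evaluated at $(x(t),u(t))$. This gives the implication ``pointwise inequality $\Rightarrow$ scattering passive'': if \eqref{Eq Characterization of passive BCO system} holds for all $(x_0,u_0)\in\V$, then it holds along every classical solution, so \eqref{Scatterin system: Definition1} follows by the identity above.

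For the converse, the key point is that every pair $(x_0,u_0)\in\V$ arises as the initial value of a classical solution. Given $(x_0,u_0)\in\V$, I would pick an input $u\in W^{2,2}([0,\infty);U)$ with $u(0)=u_0$ (for instance a suitable smooth cutoff of the constant $u_0$; note $u_0\in U$ is arbitrary and such $u$ exists since $W^{2,2}$ functions can take prescribed value at $0$). Since $x_0\in D(\frA)$ and $\frB x_0=u_0=u(0)$, Proposition \ref{Proposition: BCS-Standard CS} provides a unique classical solution $(x,y)$ of \eqref{StateEqBCS}-\eqref{ObsEqBCS} with $x(0)=x_0$. Applying the assumed inequality \eqref{Scatterin system: Definition1} at $t=0$ and the derivative identity above at $t=0$ yields
\begin{equation*}
2\Re(\frA x_0\mid Px_0)_X=\frac{\d}{\d t}(Px(t)\mid x(t))_X\Big|_{t=0}\leq (Ru_0\mid u_0)_U-(\frC x_0\mid J\frC x_0)_Y,
\end{equation*}
which is \eqref{Eq 2Characterization of passive BCO system}; since $\frB x_0=u_0$, it is also \eqref{Eq Characterization of passive BCO system}. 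The equivalence of \eqref{Eq Characterization of passive BCO system} and \eqref{Eq 2Characterization of passive BCO system} is immediate from $\frB x_0=u_0$. Finally, the energy-preserving statement follows by running the same argument with equalities throughout: equality in \eqref{Scatterin system: Definition1} for all solutions is, via the derivative identity, equality in \eqref{Eq Characterization of passive BCO system} along solutions, and the converse again uses Proposition \ref{Proposition: BCS-Standard CS} to realize an arbitrary $(x_0,u_0)\in\V$.

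The main obstacle I anticipate is the converse direction, specifically the need to guarantee a classical solution attaining a prescribed $(x_0,u_0)\in\V$ at time zero. This requires invoking Proposition \ref{Proposition: BCS-Standard CS}, whose hypotheses demand $u\in W^{2,2}([0,\infty);U)$ with the compatibility condition $\frB x_0=u(0)$ — the latter is exactly the membership $(x_0,u_0)\in\V$, so the only real work is exhibiting an admissible $W^{2,2}$-input with $u(0)=u_0$, which is routine. A secondary subtlety is that \eqref{Scatterin system: Definition1} is required to hold ``for all $t\geq 0$'', so evaluating at $t=0$ is legitimate; one should note the derivative at $t=0$ is a one-sided derivative, consistent with $x\in C^1([0,\infty),X)$ in Definition \ref{Definition: ClassSol and wellposedness Autonomous case}. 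No deeper analytic difficulty arises: the whole proof is the derivative identity $\frac{\d}{\d t}(Px\mid x)=2\Re(\frA x\mid Px)$ combined with the solvability from Proposition \ref{Proposition: BCS-Standard CS}.
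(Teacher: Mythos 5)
Your proof is correct and follows essentially the same route as the paper's: the identity $\tfrac{\d}{\d t}(Px(t)\mid x(t))_X=2\Re(\frA x(t)\mid Px(t))_X$ along classical solutions, evaluated at the initial time, together with the trivial equivalence of the two inequalities via $\frB x_0=u_0$. The only difference is that you explicitly invoke Proposition \ref{Proposition: BCS-Standard CS} (with a $W^{2,2}$ cutoff input satisfying $u(0)=u_0$) to produce a classical solution through a given $(x_0,u_0)\in\V$, a step the paper's proof leaves implicit by simply assuming such a solution exists.
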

\begin{proof} Obviously, the inequalities \eqref{Eq Characterization of passive BCO system} and \eqref{Eq 2Characterization of passive BCO system} are equivalent since $\frB x_0=u_0$ for each $(x_0,u_0)\in \V$. Assume that  $\Sigma(\frA,\frB,\frC)$ is $(R,P,J)$-scattering passive. Let $(x_0, u_0)\in \mathcal V$ and $u:[0,\infty)\ra U$ such that $u(0)=u_0$.  Assume that $(x,y)$ is  a classical solution of  \eqref{StateEqBCS}-\eqref{ObsEqBCS} corresponding to $(x_0,u)$. Then $(x(t),u(t))\in \V$  and 
\begin{align*}
\frac{\d}{\d t} (Px(t)\mid x(t))&=2\Re (\dot x(t)\mid Px(t))
=2\Re (\frA x(t)\mid P x(t))
\end{align*}
for all $t\geq 0$. Inserting this into \eqref{Scatterin system: Definition1} yields 
\begin{equation}
2\Re (\frA x(t)\mid P x(t))\leq (Ru(t)\mid \frB x(t))_U-(\frC x(t)\mid J\frC x(t))_Y
\end{equation}
for all $t\geq 0$. The previous inequality implies \eqref{Eq Characterization of passive BCO system}  by taking $t=0$. The converse implication and the last assertion can be  proved similarly. 
\end{proof}

\section{Non-autonomous boundary and observation systems}\label{Sec 3}
In this section, our aim  is to extend the results of Section \ref{BCOAutobonous} to the more general case where $\frA,\frB$, and $\frC$ are time dependent. 
Let $X, U$ and $Y$ be Hilbert spaces over $\K=\C$ or $\R$. For each $t\geq 0$ we consider the linear operators $\frA(t):D(\frA(t))\subset X\ra X$,  $\frB(t): D(\frB(t))\subset X\ra U$ and $\frC(t): D(\frA(t))\subset X\lra Y$  such that $ D(\frA(t))\subset D(\frB(t))$ for each $t\geq 0$. 

We consider the following abstract non-autonomous boundary  system
 \begin{align}
\label{NStateEqBCS}\dot x(t)&=\frA(t)x(t), \quad t\geq s, \quad x(s)=x_s,\ (s\geq 0)\\
\label{NConEqBCS}\frB(t)x(t)&=u(t),\quad t\geq s,\\
\label{NObsEqBCS}\frC(t) x(t)&=y(t),\quad t\geq s,
\end{align}
which we denote by $\Sigma_N(\frA,\frB,\frC)$.
\begin{definition}\label{Def: NBCO}
Let $s\geq 0, x_s\in X$ and $u:[0,\infty)\lra U$ be given. 
\begin{itemize}
\item [$(i)$] A function $x:[s,\infty)\lra X$ is called a \textit{classical solution} of \eqref{NStateEqBCS}-\eqref{NConEqBCS}, if $x\in C^1([s,\infty),X)$, $x(t)\in D(\frA(t))$ for all $t\geq s$ and $x$ satisfies \eqref{NStateEqBCS}-\eqref{NConEqBCS}.
\item [$(ii)$] A pair $(x,y)$ is a \textit{classical solution} of \eqref{NStateEqBCS}-\eqref{NObsEqBCS}, if $x$ is classical solution of \eqref{NStateEqBCS}-\eqref{NConEqBCS}, $y\in C([s,\infty);Y)$ and $(x,y)$ satisfies \eqref{NStateEqBCS}-\eqref{NObsEqBCS}.
\item [$(iii)$] $\Sigma_N(\frA,\frB,\frC)$ is a \textit{non-autonomous boundary control and observation system}, and we write NBCO-systems, if for each $t\geq 0$ the autonomous system $\Sigma(\frA(t),\frB(t),\frC(t))$ is a BCO-system such that the family $\{A(t)\mid t\geq 0\}$ of main operators generates an evolution family.
\item [$(iv)$] The non-autonomous system $\Sigma_N(\frA,\frB,\frC)$ is called  \textit{well-posed} if for any \textit{final time} $\tau>0$ there exists a constant $m_\tau>0$ such that for all classical solution of  \eqref{NStateEqBCS}-\eqref{NObsEqBCS} we have  
\begin{equation*}
\|x(\tau)\|^2_X+\int_s^\tau \|y(r)\|_Y\d r\leq m_\tau \Big( \|x(s)\|^2_X+\int_s^\tau \|u(r)\|_U^2 \d r\Big).
\end{equation*}
 
\end{itemize}
\end{definition}

\subsection{Existence of classical solutions}\label{subsection ExistenceClSolN}
Let $\Sigma_N(\frA,\frB,\frC)$ be a NBCO-system. In this subsection, we study existence and uniqueness of classical solutions of $\Sigma_N(\frA,\frB,\frC)$ without output, i.e., classical solution of \eqref{NStateEqBCS}-\eqref{NConEqBCS}.  In the previous section we have seen in the autonomous  case that  \eqref{NStateEqBCS}-\eqref{NConEqBCS}  can be equivalently written as a $C^1$-well-posed inhomogeneous Cauchy problem (in $X$) for sufficiently smooth initial data and inputs. This idea can be  extended to the non-autonomous setting. 
\par For each $t\geq 0$, we denote by $A(t): D(A(t)\subset X\ra X$ the main operator of $\Sigma(\frA(t),\frB(t),\frC(t))$, and by $\U$ the evolution family generated by  $\{A(t)\mid t\geq 0\}$.  Further,  according to Definition \ref{Def: NBCO}-$(iii)$ there exists $\{\tilde B(t)\mid t\geq 0\}\subset \L(U,X)$ such that for all $t\geq 0$ we have \begin{equation}\tilde B(t)U\subset  D(\frA(t)),\  \frA(t) \tilde B(t)\in\L(U,X)\  \text{ and } \ \frB(t) \tilde B(t)=I_{U}. \end{equation}
We also consider the time-dependent admissible spaces $\V(t), t\geq 0$, i.e,
\begin{equation*}
\V(t):=\{(x,u)\in X\times U\mid x\in D(\frA(t)) \text{ and } \frB(t)x=u\}.
\end{equation*}
 Since $\{A(t)\mid t\geq 0\}$ generates an evolution family $\U$ on $X$, for a given $f\in L_{Loc}^1([0,\infty);X)$ the  inhomogeneous non-autonomous Cauchy problem 
 \begin{eqnarray} 
\label{InNCB} \dot{v}(t)&=&A(t)v(t) + f(t), \quad t\ge s,\ (s\geq 0), \  \\\label{InNCB2} v(s)&=&v_s, 
\end{eqnarray}
 has at most one classical solution given by 
\begin{equation*}
v(t)=U(t,s)v_s+\int_s^t U(t,r)f(r)\d r,
\end{equation*}
see e.g., \cite[Section 5.5.1]{Pa83}.
Thus the following  proposition provides a  generalization of \cite[Theorem 3.3.3]{Cur-Zwa95} (see also Proposition \ref{Prop: Characterization of passive BCO system}).
\begin{proposition}\label{Proposition Fattorini Trick Non-A} Assume that $u\in C^1([0,\infty);U)$, $\tilde B(\cdot)u_0\in C^1([0,\infty);X)$  and  $\frA(\cdot)\tilde B(\cdot)u_0\in L^1([0,\infty);X)$ for each $u_0\in U$.  Let $x_s\in X$ such that $(x_s,u_s)\in \V(s)$. Then $x$ is a classical solution of \eqref{NStateEqBCS}-\eqref{NConEqBCS} if and only if $v:=x-\tilde Bu$ is a classical solution of \eqref{InNCB}-\eqref{InNCB2}   with inhomogeneity \begin{equation}\label{ inhomogeneity Fattorini trick}f(t)=\frF_u(t):=\frA(t)\tilde B(t)u(t)-\frac{\d}{\d t} \big[\tilde B(t)u(t)\big]\end{equation}
and initial data $v_s=x_s-\tilde B(s)u(s)$. Therefore, \eqref{NStateEqBCS}-\eqref{NConEqBCS} has at most one classical solution $x$  given by 
\begin{align} \label{classSolNBOC} x(t)&=U(t,s)[x_s-\tilde B(s)u(s)]+\tilde B(t)u(t)+\int_s^tU(t,r)\frF_u(r)\d r
\end{align}
for each $t\geq s$. 
\end{proposition}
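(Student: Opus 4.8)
The plan is to mimic Fattorini's trick from the autonomous case (Proposition \ref{Proposition: BCS-Standard CS}) using Remark \ref{key remark}, now applied fiberwise at each frozen time $t$. The key observation is that for each fixed $t\geq 0$ the system $\Sigma(\frA(t),\frB(t),\frC(t))$ is a BCO-system, so by Remark \ref{key remark} we have the equivalence
\[
x(t)\in D(\frA(t)) \text{ and } \frB(t)x(t)=u(t) \quad\Longleftrightarrow\quad x(t)-\tilde B(t)u(t)\in D(A(t)).
\]
So the plan is: first, set $v(t):=x(t)-\tilde B(t)u(t)$ and observe that under the regularity hypotheses on $u$ and on $t\mapsto\tilde B(t)u(t)$ the map $v$ inherits the $C^1$-regularity of $x$ (and conversely). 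Second, show that $x$ solves \eqref{NStateEqBCS}-\eqref{NConEqBCS} if and only if $v$ solves \eqref{InNCB}-\eqref{InNCB2} with the stated inhomogeneity $\frF_u$ and initial value $v_s=x_s-\tilde B(s)u(s)$.

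For the forward direction I would argue as follows. Suppose $x$ is a classical solution of \eqref{NStateEqBCS}-\eqref{NConEqBCS}. Then $x(t)\in D(\frA(t))$ and $\frB(t)x(t)=u(t)$ for all $t\geq s$, so by the equivalence above $v(t)=x(t)-\tilde B(t)u(t)\in D(A(t))$ for all $t\geq s$; note $D(A(t))\subset D(\frA(t))$ and $A(t)$ agrees with $\frA(t)$ there. Differentiating $v(t)=x(t)-\tilde B(t)u(t)$ (legitimate because $x\in C^1$ and $t\mapsto\tilde B(t)u(t)\in C^1$ by hypothesis) and using $\dot x(t)=\frA(t)x(t)$ gives
\[
\dot v(t)=\frA(t)x(t)-\frac{\d}{\d t}\big[\tilde B(t)u(t)\big]
=\frA(t)v(t)+\frA(t)\tilde B(t)u(t)-\frac{\d}{\d t}\big[\tilde B(t)u(t)\big]
=A(t)v(t)+\frF_u(t),
\]
where in the last step I replaced $\frA(t)v(t)$ by $A(t)v(t)$ since $v(t)\in D(A(t))$, and used linearity together with $\tilde B(t)u(t)\in D(\frA(t))$. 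The initial condition $v(s)=x_s-\tilde B(s)u(s)$ is immediate. The converse direction reverses this computation: starting from a classical solution $v$ of \eqref{InNCB}-\eqref{InNCB2}, set $x(t):=v(t)+\tilde B(t)u(t)$; then $v(t)\in D(A(t))$ forces $x(t)\in D(\frA(t))$ with $\frB(t)x(t)=u(t)$ by Remark \ref{key remark}, and adding $\frac{\d}{\d t}[\tilde B(t)u(t)]$ back in shows $\dot x(t)=\frA(t)x(t)$. Finally, uniqueness of the classical solution of \eqref{NStateEqBCS}-\eqref{NConEqBCS} follows from the uniqueness of the classical solution of the inhomogeneous non-autonomous Cauchy problem \eqref{InNCB}-\eqref{InNCB2} recorded just before the statement (with $f=\frF_u\in L^1_{loc}$, which holds since $\frA(\cdot)\tilde B(\cdot)u_0\in L^1$ and $t\mapsto\tilde B(t)u(t)$ is $C^1$), and formula \eqref{classSolNBOC} is obtained by substituting the variation-of-constants formula for $v$ and adding $\tilde B(t)u(t)$.

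The main obstacle is bookkeeping rather than depth: one must be careful that all the differentiations are justified, that the passage between $\frA(t)$ and $A(t)$ on the appropriate domains is correct, and that the regularity hypotheses on $u$, on $t\mapsto\tilde B(t)u(t)$, and on $t\mapsto\frA(t)\tilde B(t)u(t)$ are exactly what is needed to make $\frF_u$ an admissible ($L^1_{loc}$) inhomogeneity and to make $v$ and $x$ simultaneously $C^1$. A subtle point worth flagging explicitly is that $\tilde B(t)u(t)$ must lie in $D(\frA(t))$ for each $t$ so that $\frA(t)[x(t)-\tilde B(t)u(t)]=\frA(t)x(t)-\frA(t)\tilde B(t)u(t)$ is a valid manipulation — this is guaranteed by the defining property $\tilde B(t)U\subset D(\frA(t))$ of the family $\{\tilde B(t)\}$.
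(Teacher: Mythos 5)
Your proposal is correct and follows essentially the same route as the paper's own proof: the fiberwise use of Remark \ref{key remark} to see $v(t)=x(t)-\tilde B(t)u(t)\in D(A(t))$, the add-and-subtract of $\frA(t)\tilde B(t)u(t)$ in the differentiation to pass from $\frA(t)$ to $A(t)$, and the appeal to uniqueness plus the variation-of-constants formula for the inhomogeneous non-autonomous Cauchy problem \eqref{InNCB}-\eqref{InNCB2} to obtain \eqref{classSolNBOC}. Nothing essential is missing.
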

\begin{proof} Let $s\geq 0.$
Clearly $x\in C^1([s,\infty); X)$ if and only if $v\in  C^1([s,\infty); X)$. Assume now that $x$ is a classical solution of \eqref{NStateEqBCS}-\eqref{NConEqBCS}. Then $v(t)\in \V_t\subset D(A(t))$ for every $t\geq s$ by Remark \ref{key remark} and 
\begin{align*}
\dot v(t)&=\dot x(t)-\dot{\tilde{B}}(t)u(t)-\tilde B(t)\dot u(t)
\\&=\frA(t)x(t)-\frA(t)\tilde B(t)u(t)+\frA(t)\tilde B(t)u(t)-\dot{\tilde{B}}(t)u(t)-\tilde B(t)\dot u(t)
\\&=A(t)[x(t)-\tilde B(t)u(t)]+\frA(t)\tilde B(t)u(t)-\dot{\tilde{B}}(t)u(t)-\tilde B(t)\dot u(t)\\&=A(t)v(t)+\frA(t)\tilde B(t)u(t)-\frac{\d}{\d t} \big[\tilde B(t)u(t)\big].
\end{align*}
Thus $v$ is a classical solution of \eqref{InNCB} with $f$ given by \eqref{ inhomogeneity Fattorini trick}. The converse implication can be proved similarly. Finally, \eqref{classSolNBOC} follows by the above the remark.
\end{proof}

\subsection{Scattering passive NBCO-systems}\label{Subsection Scattering passive NBCO-systems}
 Let $R:[0,\infty)\lra \L(U)$, $P: [0,\infty)\lra \L(X)$ and $J:[0,\infty)\lra \L(Y)$ be continuous functions such that $P$ is strongly differentiable and $R(t)^*=R(t)$, $P(t)^*=P(t)$, $J(t)^*=J(t)$ for all $t\geq 0$. 
\begin{definition} Let $(x,y)$ be classical solution of \eqref{NStateEqBCS}-\eqref{NObsEqBCS}. Then $\Sigma_N(\frA,\frB,\frC)$ is called \textit{(R,P,J)-scattering passive} if  for all $t\geq s$
\begin{equation}\label{(R,P,J)-scattering passive Definition 1} 
\frac{\d}{\d t} (P(t)x(t)\mid x(t))+(y(t)\mid J(t)y(t))_Y\leq (u(t)\mid R(t)u(t))_U+ (\dot P(t)x(t)\mid x(t)).
\end{equation}
Further, $\Sigma_N(\frA,\frB,\frC)$  is called  \textit{(R,P,J)-scattering energy preserving}  if equality holds in \eqref{(R,P,J)-scattering passive Definition 1}. If $P=I, R=I$ and $J=I$ then $\Sigma_N(\frA,\frB,\frC)$ is called \textit{scattering passive}, and \textit{scattering energy preserving} if we have equality in \eqref{(R,P,J)-scattering passive Definition 1}. 
\end{definition}
We have seen in Section \ref{Section: Scattering passive ABCOs} that for autonomous BCO-systems $\Sigma(\frA,\frB,\frC)$ the $(R,P,J)$-scattering passivity can be characterized in terms of $\frA$, $\frB$ and $\frC$ and $(R,P,J)$-scattering passivity is a sufficient condition for well-posedness,  if additionally $P$ and $J$ are invertible. Proposition \ref{Propo: Characterisation of passive NBCOsystems} and Lemma \ref{sufficient condition Wellposedness NBCOs}  generalize this facts for non-autonomous boundary control and observation systems.
\begin{proposition}\label{Propo: Characterisation of passive NBCOsystems} The following assertion are equivalent. 
\begin{itemize}
\item [$(i)$]$\Sigma_N(\frA,\frB,\frC)$ is  (R,P,J)-scattering passive. 
\item [$(ii)$] For each $t\geq 0$ and all $(x,u)\in \V(t)$ we have 
\begin{equation}\label{characterization of passive system equation}
2\Re (\frA(t)x\mid P(t)x)_X\leq (R(t)u\mid \frB(t)x)_U-(\frC(t)x\mid J(t)\frC(t)x)_Y.
\end{equation}
\item [$(iii)$]   For each $t\geq 0$, the autonomous BCO-system $\Sigma(\frA(t),\frB(t),\frC(t))$ is $(R(t),P(t),J(t))$-scattering passive.
\end{itemize}
\end{proposition}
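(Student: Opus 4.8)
The plan is to prove the cyclic chain of implications $(i)\Rightarrow(ii)\Rightarrow(iii)\Rightarrow(i)$, reducing everything to the already-established autonomous characterization in Lemma~\ref{Prop: Characterization of passive BCO system}. The core observation is that the non-autonomous inequality \eqref{(R,P,J)-scattering passive Definition 1} at a single instant $t$, together with the product rule $\frac{\d}{\d t}(P(t)x(t)\mid x(t)) = (\dot P(t)x(t)\mid x(t)) + 2\Re(\dot x(t)\mid P(t)x(t))$, is exactly the autonomous inequality \eqref{Eq Characterization of passive BCO system} for the frozen system $\Sigma(\frA(t),\frB(t),\frC(t))$ evaluated along $(x(t),u(t))\in\V(t)$. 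So the whole proposition should follow by "freezing time'' and invoking the autonomous result, much as in the proof of Lemma~\ref{Prop: Characterization of passive BCO system}.

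For $(i)\Rightarrow(ii)$: fix $t_0\geq 0$ and $(x_0,u_0)\in\V(t_0)$. I would like to produce a classical solution $(x,y)$ of \eqref{NStateEqBCS}-\eqref{NObsEqBCS} on an interval around $t_0$ with $x(t_0)=x_0$ and $u(t_0)=u_0$, so that plugging into \eqref{(R,P,J)-scattering passive Definition 1} at $t=t_0$ and using $\dot x(t_0)=\frA(t_0)x_0$ plus the product rule yields \eqref{characterization of passive system equation}. The existence of such a solution is supplied by Proposition~\ref{Proposition Fattorini Trick Non-A}: choose any $u\in C^1$ with $u(t_0)=u_0$ meeting the regularity hypotheses there (e.g.\ $u$ constant $\equiv u_0$, assuming the structural hypotheses on $\tilde B(\cdot)$, $\frA(\cdot)\tilde B(\cdot)$ hold — this is the point where one must be careful about which standing assumptions are in force), set $x(t):=U(t,t_0)[x_0-\tilde B(t_0)u_0]+\tilde B(t)u(t)+\int_{t_0}^t U(t,r)\frF_u(r)\,\d r$, and let $y(t):=\frC(t)x(t)$. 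Then $x(t)\in D(\frA(t))$, $\frB(t)x(t)=u(t)$, so $(x(t),u(t))\in\V(t)$; evaluating at $t_0$ gives the claim. The implication $(ii)\Rightarrow(iii)$ is immediate: \eqref{characterization of passive system equation} for a fixed $t$ is precisely condition \eqref{Eq Characterization of passive BCO system} of Lemma~\ref{Prop: Characterization of passive BCO system} for the BCO-system $\Sigma(\frA(t),\frB(t),\frC(t))$ with data $(P(t),R(t),J(t))$, hence that system is $(R(t),P(t),J(t))$-scattering passive.

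For $(iii)\Rightarrow(i)$: let $(x,y)$ be any classical solution of \eqref{NStateEqBCS}-\eqref{NObsEqBCS}. For each fixed $t\geq s$ the pair $(x(t),u(t))$ lies in $\V(t)$, and since $\Sigma(\frA(t),\frB(t),\frC(t))$ is $(R(t),P(t),J(t))$-scattering passive, Lemma~\ref{Prop: Characterization of passive BCO system} gives
\begin{equation*}
2\Re(\frA(t)x(t)\mid P(t)x(t))\leq (R(t)u(t)\mid u(t))_U - (\frC(t)x(t)\mid J(t)\frC(t)x(t))_Y.
\end{equation*}
Now use $\dot x(t)=\frA(t)x(t)$ and the product rule for the strongly differentiable $P$ to rewrite the left-hand side as $\frac{\d}{\d t}(P(t)x(t)\mid x(t)) - (\dot P(t)x(t)\mid x(t))$, and recall $y(t)=\frC(t)x(t)$; rearranging yields exactly \eqref{(R,P,J)-scattering passive Definition 1}. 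The energy-preserving statement follows by replacing every inequality above with an equality and tracking that all three implications preserve equality, again via the corresponding clause of Lemma~\ref{Prop: Characterization of passive BCO system}.

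I expect the only genuine obstacle to be the construction step in $(i)\Rightarrow(ii)$: one must ensure that, given an arbitrary admissible pair $(x_0,u_0)\in\V(t_0)$, there really is a classical solution through it, i.e.\ that the regularity hypotheses of Proposition~\ref{Proposition Fattorini Trick Non-A} ($u\in C^1$, $\tilde B(\cdot)u_0\in C^1$, $\frA(\cdot)\tilde B(\cdot)u_0\in L^1$) are available as standing assumptions on the NBCO-system, or can be arranged. If these are part of the ambient hypotheses the argument is routine; if not, one restricts attention to those $(x_0,u_0)$ for which a suitable $u$ exists and then uses a density argument in $\V(t)$ to recover \eqref{characterization of passive system equation} for all admissible pairs, exploiting that both sides of the inequality are continuous in $(x,u)$ with respect to the graph norm on $D(\frA(t))$. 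Everything else is a one-line application of the autonomous lemma plus the product rule.
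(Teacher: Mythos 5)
Your proposal is correct and follows essentially the same route as the paper: the equivalence of $(ii)$ and $(iii)$ is exactly Lemma~\ref{Prop: Characterization of passive BCO system}, and the equivalence of $(i)$ and $(ii)$ is obtained by freezing time, using $\dot x(t)=\frA(t)x(t)$ together with the product rule for $t\mapsto (P(t)x(t)\mid x(t))$, and evaluating the resulting inequality at the initial time (the paper proves $(ii)\Leftrightarrow(iii)$ and $(i)\Leftrightarrow(ii)$ rather than your cycle, which is a cosmetic difference). One caution on your construction step: Proposition~\ref{Proposition Fattorini Trick Non-A} only yields uniqueness and the representation formula for classical solutions, not their existence, so it cannot by itself produce a solution through a given pair in $\V(t_0)$; the paper sidesteps this exactly as you do before invoking that proposition, namely by evaluating the passivity inequality along an assumed classical solution with data $(x_s,u)$ at $t=s$, so the extra construction and the density fallback are not needed to match the paper's argument.
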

\begin{proof}
The equivalence of $(ii)$ and $(iii)$ has been  proved in Proposition \ref{Prop: Characterization of passive BCO system}. It remains to prove the equivalence of $(i)$ and $(ii)$. Assume that $(i)$ holds and let $s\geq 0$ and  let $(x_s, u_s)\in \mathcal V(s)$. Let $u: [s,\infty)\lra U$ such that $u(s)=u_s$. If $(x,y)$ is a classical solution of \eqref{NStateEqBCS}-\eqref{NObsEqBCS} corresponding to $(x_s,u)$ then $(x(t),u(t))\in \mathcal V(t), \ y(t)=\frC(t)x(t)$  and 
\begin{align}
\label{CharPassiveNA: proof Eq1}\frac{\d}{\d t} (P(t)x(t)\mid x(t))-(\dot P(t)x(t)\mid x(t))&=2\Re (\dot x(t)\mid P(t)x(t))
\\\label{CharPassiveNA: proof Eq2}&=2\Re (\frA(t)x(t)\mid P(t)x(t))
\end{align}
for all $t\geq s.$ Inserting this into \eqref{(R,P,J)-scattering passive Definition 1} yields
\begin{equation*}
2\Re (\frA(t)x(t)\mid P(t)x(t))\leq (R(t)u(t)\mid \frB(t)x(t))_U-(\frC(t)x(t)\mid J(t)\frC(t)x(t))_Y
\end{equation*}
for all $t\geq s$. The last inequality  $(ii)$  by taking $t=s$. Conversely, assume that $(ii)$ holds and let $(x,y)$ be a classical solution of \eqref{NStateEqBCS}-\eqref{NObsEqBCS}. Then $(x(t),u(t))\in \V(t)$ and  \eqref{CharPassiveNA: proof Eq1}-\eqref{CharPassiveNA: proof Eq2} holds for all $t\geq s.$ This together with \eqref{characterization of passive system equation} imply \eqref{(R,P,J)-scattering passive Definition 1}, which completes the proof.
\end{proof}

\begin{lemma}\label{sufficient condition Wellposedness NBCOs} Let $\Sigma_N(\frA,\frB,\frC)$ be  $(R,P,J)$-scattering passive such that $J\geq 0.$ Assume that  $P$ is strongly $C^1$-continuous and uniformly coercive with  
\begin{equation}\label{uniformly coerciveness of P}
 (P(t)x|x)\geq \beta\|x\|^2, \text{ for all } t\geq 0, x\in X
\end{equation}
for some constant $\beta>0.$ Then  each classical solution of \eqref{NStateEqBCS}-\eqref{NObsEqBCS} satisfies the following inequality 
\begin{equation}\label{balance inequality for NBCOs} \beta\|x(t)\|^2+\int_s^t(y(r)\mid J(r)y(r))\d r \leq c_{t,s} e^{\frac{1}{\beta}\int_s^t \|\dot P(r)\|\d r}\Big[\int_s^t(u(r)\mid R(r)u(r))\d r+\|x(s)\|^2\Big]\end{equation}
where $c_{t,s}=\max\{1,\underset{r\in [s,t]}{\max }\| P(r)\|\}$.
Therefore, $\Sigma_N(\frA,\frB,\frC)$ is well-posed provided that $J$  is uniformly coercive and $R\in L^\infty_{Loc}([0,\infty);\L(U))$.

\end{lemma}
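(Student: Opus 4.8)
The plan is to derive the balance inequality \eqref{balance inequality for NBCOs} directly from the differential inequality \eqref{(R,P,J)-scattering passive Definition 1} by a Gronwall argument, and then to read off well-posedness from it. First I would fix a classical solution $(x,y)$ of \eqref{NStateEqBCS}-\eqref{NObsEqBCS} on $[s,\infty)$ and set $\varphi(r):=(P(r)x(r)\mid x(r))$. Since $P$ is strongly $C^1$-continuous and $x\in C^1$, the function $\varphi$ is $C^1$ on $(s,\infty)$ and continuous on $[s,\infty)$, and \eqref{(R,P,J)-scattering passive Definition 1} rewritten in terms of $\varphi$ reads
\begin{equation*}
\varphi'(r)\le (u(r)\mid R(r)u(r))_U+(\dot P(r)x(r)\mid x(r))-(y(r)\mid J(r)y(r))_Y .
\end{equation*}
Using the uniform coercivity \eqref{uniformly coerciveness of P} in the form $\|x(r)\|^2\le \frac1\beta\varphi(r)$, the term $(\dot P(r)x(r)\mid x(r))$ is bounded by $\frac1\beta\|\dot P(r)\|\,\varphi(r)$.

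Next I would integrate this inequality over $[s,t]$. Writing $\psi(t):=\int_s^t(y(r)\mid J(r)y(r))_Y\,\d r$ (which is nonnegative and nondecreasing since $J\ge 0$) and $g(t):=\int_s^t(u(r)\mid R(r)u(r))_U\,\d r$, we obtain
\begin{equation*}
\varphi(t)+\psi(t)\le \varphi(s)+g(t)+\frac1\beta\int_s^t\|\dot P(r)\|\,\varphi(r)\,\d r .
\end{equation*}
Since $\psi\ge 0$, in particular $\varphi(t)\le \varphi(s)+g(t)+\frac1\beta\int_s^t\|\dot P(r)\|\varphi(r)\,\d r$. Now I would apply Gronwall's lemma to $\varphi$ with the nondecreasing ``constant'' $\varphi(s)+g(t)$ (it is nondecreasing in $t$ because $g$ is), giving
\begin{equation*}
\varphi(t)\le \big(\varphi(s)+g(t)\big)\exp\Big(\tfrac1\beta\int_s^t\|\dot P(r)\|\,\d r\Big).
\end{equation*}
Feeding this bound on $\varphi(r)$ back into the integrated inequality and again bounding $\int_s^t\|\dot P(r)\|e^{\frac1\beta\int_s^r\|\dot P\|}\,\d r\le \beta\big(e^{\frac1\beta\int_s^t\|\dot P\|}-1\big)$ yields $\varphi(t)+\psi(t)\le (\varphi(s)+g(t))e^{\frac1\beta\int_s^t\|\dot P(r)\|\d r}$. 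Finally, bounding $\beta\|x(t)\|^2\le\varphi(t)$ from below and $\varphi(s)=(P(s)x(s)\mid x(s))\le \|P(s)\|\,\|x(s)\|^2\le c_{t,s}\|x(s)\|^2$ from above (and noting $g(t)\le c_{t,s}g(t)$), one arrives exactly at \eqref{balance inequality for NBCOs}.

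For the last assertion, suppose in addition $J$ is uniformly coercive, say $(J(r)y\mid y)_Y\ge \gamma\|y\|_Y^2$, and $R\in L^\infty_{Loc}$. Then the left-hand side of \eqref{balance inequality for NBCOs} dominates $\beta\|x(t)\|^2+\gamma\int_s^t\|y(r)\|_Y^2\,\d r$, while on the right $\int_s^t(u(r)\mid R(r)u(r))_U\,\d r\le \big(\operatorname*{ess\,sup}_{r\in[s,t]}\|R(r)\|\big)\int_s^t\|u(r)\|_U^2\,\d r$. Dividing through by $\min\{\beta,\gamma\}$ and absorbing the remaining factors $c_{t,s}$, $e^{\frac1\beta\int_s^t\|\dot P\|}$ and $\|R\|_\infty$ into a single constant $m_\tau$ (finite for each final time $\tau$, since $P$ is $C^1$ and $R$ is locally bounded) gives the well-posedness estimate from Definition \ref{Def: NBCO}-$(iv)$.

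I expect the only genuinely delicate point to be the regularity bookkeeping needed to justify the integration step: one must make sure $\varphi$ is absolutely continuous on $[s,t]$ (so that the fundamental theorem of calculus applies to the differential inequality), which follows from $x\in C^1([s,\infty);X)$ together with the strong $C^1$-continuity of $P$, and one should be slightly careful that \eqref{(R,P,J)-scattering passive Definition 1} is assumed pointwise for all $t\ge s$ rather than merely a.e.; in either reading the integrated inequality is the same. Everything else is a routine two-step Gronwall estimate.
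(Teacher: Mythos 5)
Your proposal is correct and follows essentially the paper's own proof: both integrate the scattering-passivity inequality \eqref{(R,P,J)-scattering passive Definition 1}, bound the $(\dot P x\mid x)$ term via the coercivity constant $\beta$, apply Gronwall's lemma, and then absorb $\|P(s)\|$ and the coercivity of $J$ together with $R\in L^\infty_{Loc}$ into the constant $m_\tau$ for well-posedness. Your two-step variant (Gronwall applied to $(P(t)x(t)\mid x(t))$ alone and then substituted back) is only a cosmetic rearrangement of the paper's single Gronwall step applied to $\beta\|x(t)\|^2+\int_s^t(y\mid Jy)\,\d r$, and it produces exactly the stated exponent $\frac{1}{\beta}\int_s^t\|\dot P(r)\|\,\d r$ and constant $c_{t,s}$.
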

\begin{proof} For the proof we follow a similar argument as in fourth steps of the proof of \cite[Theorem 4.1]{SchWei10}. Assume that $\Sigma_N(\frA,\frB,\frC)$ is $(R,P,J)$-scattering passive. Clearly  \eqref{(R,P,J)-scattering passive Definition 1} holds if and only if  \begin{align}\label{(R,P,J)-scattering passive Definition 2}
(P(t)x(t)\mid x(t))+\int_s^t(y(r)\mid J(r)y(r))\d r\leq &\int_s^t(u(r)\mid R(r)u(r))\d r\\\nonumber&+ \int_s^t(\dot P(r)x(r)\mid x(r))\d r+(P(s)x_s\mid x_s)\quad 
\end{align}
for all $t\geq s\geq 0$. Thus using \eqref{uniformly coerciveness of P} and that $J\geq 0$ we obtain 
	\begin{align*}
	\beta\|x(t)\|^2+\int_s^t(y(r)\mid J(r)y(r))\d r&
	\leq \int_s^t(u(r)\mid R(r)u(r))\d r+\|P(s)\|\|x(s)\|^2\\&\qquad+\int_s^t\|\dot P(r)\|\|x(r)\|^2\d r
\\ &\leq \int_s^t(u(r)\mid R(r)u(r))\d r+\|P(s)\|\|x(s)\|^2\\&\qquad+\int_s^t\frac{1}{\beta}\|\dot P(r)\|\Big[\beta \|x(r)\|^2+\int_s^r(y(\zeta)\mid J(\zeta)y(\zeta))\d \zeta\Big]\d r.
	\end{align*}
	Applying Gronwall's Lemma  yields 
	\begin{equation}\beta\|x(t)\|^2+\int_s^t(y(r)\mid J(r)y(r))\d r \leq  e^{\frac{2}{\beta}\int_s^t \|\dot P(r)\|\d r}\Big[\int_s^t(u(r)\mid R(r)u(r))\d r+\|P(s)\|\|x(s)\|^2\Big],\end{equation} which implies \eqref{balance inequality for NBCOs}. This completes the proof.
\end{proof}

\subsection{Multiplicative perturbed of NBCO-systems}\label{Section perturbation of NBCOS}
We will adopt the same notations of the previous sections.  
The  main purpose of this section is the study of some classes of NBCO-systems which are governed by a time-dependent multiplicative perturbation. More precisely, let $\Sigma_N(\frA,\frB,\frC)$ be a NBCO-system such that the boundary operators are constant, that is $\frC(t)=\frC$ and $\frB(t)=\frB$ for all $t\geq 0$. Thus the domain $\frA(t)$ should also be constant and we set $\D(\frA(t))=\frD$ for all $t\geq 0$.

 Further, throughout this section we assume that the  following assumption holds: 
\begin{assumption}\label{assumption 1 } 
\begin{enumerate}
\item  $M:[0,\infty)\lra \L(X)$and $R:[0,\infty)\lra \L(X)$ be two self-adjoint and uniformly coercive functions.
\item   $M(\cdot)x \in C^2([0,\infty);X)$ and $M^{-1}(\cdot)x \in C([0,\infty);X)$ for each $x\in X$.
\item $L(\cdot)x \in C^1([0,\infty);X)$  for each $x\in X$ such that $L$ and $M$ commute.
\end{enumerate}\end{assumption} 
For  each $t\geq 0$ we set
\begin{align*}
\frA_M(t):&=\frA(t)M(t)
\\ \frC_M(t):&=\frC M(t)\ \text{ and }\  \frB_M(t):=\frB M(t).
\end{align*}
We consider the following perturbed  system
\begin{align}
\label{NStateEqBCSPert2}\dot x(t)&=\frA_M(t)x(t), \quad x(0)=x_0,\\
\label{NConEqBCSPert2}\frB_M(t)x(t)&=u(t),\\
\label{NObsEqBCSPert2}\frC_M(t) x(t)&=y(t),
\end{align}
which we denote by $\Sigma_{N,M}(\frA, \frB,\frC)=\Sigma_{N}(\frA M, \frB M,\frC M)$. Let $\tilde B(t)$ be operators associated with $\Sigma_{M}(\frA, \frB,\frC)$ provided by Definition \ref{Def:BCOS}-$(ii).$
 Then $\tilde B_M(t):=M^{-1}(t)\tilde B(t)$ satisfies for each $t\geq 0$ all properties listed in Definition \ref{Def:BCOS}-$(ii)$. Moreover, the main operators associated with $\Sigma_{M,N}(\frA, \frB,\frC)$  are given by $\{A(t)M(t)\mid t\geq 0 \}$, where $D(A(t)M(t))=M^{-1}(t)(\frD\cap \ker(\frB))$ for each $t \geq 0$.

\medskip
\begin{lemma}\label{Lemma passive pert} The perturbed system  $\Sigma_{N,M}(\frA,\frB,\frC)$ is $(R,P,J)$-scattering passive  if and only if $\Sigma_{N}(\frA,\frB,\frC)$  is $(R,PM^{-1},J)$-scattering passive.
\end{lemma}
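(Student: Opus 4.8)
The plan is to unravel both scattering-passivity conditions via the characterization from Proposition \ref{Propo: Characterisation of passive NBCOsystems}(ii), applied to the two systems in question, and then match them up using the explicit description of the perturbed data. For the system $\Sigma_{N,M}(\frA,\frB,\frC) = \Sigma_N(\frA M,\frB M,\frC M)$, the relevant admissible spaces are $\V_M(t) = \{(z,u)\in X\times U \mid M(t)z\in\frD \text{ and } \frB M(t)z = u\}$, while for $\Sigma_N(\frA,\frB,\frC)$ they are $\V(t) = \{(x,u)\in X\times U \mid x\in\frD \text{ and } \frB x = u\}$. The obvious bijection is $x = M(t)z$, i.e.\ $(z,u)\in\V_M(t) \iff (M(t)z,u)\in\V(t)$.

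First I would write out, for fixed $t\geq 0$, the characterization for $\Sigma_{N,M}$: it is $(R,P,J)$-scattering passive iff
\begin{equation*}
2\Re(\frA(t)M(t)z \mid P(t)z)_X \leq (R(t)u \mid \frB M(t)z)_U - (\frC M(t)z \mid J(t)\frC M(t)z)_Y
\end{equation*}
for all $(z,u)\in\V_M(t)$. Next I substitute $x := M(t)z$, so $z = M^{-1}(t)x$ and $x$ ranges over $\frD\cap\frB^{-1}(\{u\})$, i.e.\ $(x,u)\in\V(t)$. Since $P(t)$ and $M^{-1}(t)$ are self-adjoint, $(\frA(t)x \mid P(t)M^{-1}(t)x)_X = (\frA(t)x \mid P(t)z)_X$, and the left-hand side becomes $2\Re(\frA(t)x \mid (P M^{-1})(t)x)_X$; here one should note $(PM^{-1})(t) = P(t)M^{-1}(t)$ is again self-adjoint because $P(t)$ and $M^{-1}(t)$ commute — this is where Assumption \ref{assumption 1 }(1)--(3) (self-adjointness, coercivity, and the commutation hypothesis) is used, so that $PM^{-1}$ is an admissible choice of weight. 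The term $(\frC M(t)z \mid J(t)\frC M(t)z)_Y = (\frC x \mid J(t)\frC x)_Y$ is unchanged, and $(R(t)u\mid\frB M(t)z)_U = (R(t)u\mid\frB x)_U$. Thus the inequality transforms exactly into the characterization of $(R,PM^{-1},J)$-scattering passivity for $\Sigma_N(\frA,\frB,\frC)$:
\begin{equation*}
2\Re(\frA(t)x \mid (PM^{-1})(t)x)_X \leq (R(t)u \mid \frB x)_U - (\frC x \mid J(t)\frC x)_Y, \quad (x,u)\in\V(t).
\end{equation*}
Since $t$ was arbitrary and the bijection $z\mapsto M(t)z$ maps $\V_M(t)$ onto $\V(t)$, the two families of inequalities are equivalent, and another application of Proposition \ref{Propo: Characterisation of passive NBCOsystems} gives the claim.

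The main point to be careful about — the only real "obstacle" — is verifying that $PM^{-1}$ falls within the class of weight operators allowed in the definition of scattering passivity: one needs $(PM^{-1})(t)$ self-adjoint and the map $t\mapsto (PM^{-1})(t)x$ continuous (and strongly differentiable, since the definition of $(R,P,J)$-scattering passivity involves $\dot P$). Self-adjointness follows from the commutation assumption $PM = MP$ (equivalently $PM^{-1} = M^{-1}P$), and the regularity follows from $M^{-1}(\cdot)x\in C([0,\infty);X)$ together with strong differentiability of $P$ and strong $C^2$-regularity of $M$ (hence of $M^{-1}$ by the resolvent-type identity $\frac{d}{dt}M^{-1} = -M^{-1}\dot M M^{-1}$). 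One should also double-check that the $\dot P$-term in \eqref{(R,P,J)-scattering passive Definition 1} does not spoil the equivalence: in fact, the characterization in Proposition \ref{Propo: Characterisation of passive NBCOsystems}(ii) has already absorbed the $\dot P$-term (it does not appear there), precisely because along a classical solution $\frac{d}{dt}(P(t)x(t)\mid x(t)) - (\dot P(t)x(t)\mid x(t)) = 2\Re(\dot x(t)\mid P(t)x(t))$, so working through (ii) rather than (i) sidesteps this entirely. Everything else is a routine substitution.
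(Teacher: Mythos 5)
Your proposal is correct and takes essentially the same route as the paper: both reduce the claim to the pointwise characterization of Proposition \ref{Propo: Characterisation of passive NBCOsystems} and exploit the bijection $(z,u)\in\V_M(t)\iff (M(t)z,u)\in\V(t)$ to transport the inequality from one system to the other (the paper gets the converse by applying its forward implication with $M^{-1}$ in place of $M$, which is just your symmetric substitution phrased differently). Your additional remarks on the self-adjointness and regularity of $PM^{-1}$, and on the $\dot P$-term being already absorbed in the characterization, address points the paper leaves implicit.
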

\begin{proof} For each $t\geq 0$ we set
\begin{equation*}\V_M(t):=
\left\{(x,u)\in X\times U\mid x\in D(\frA_M(t)) \text{ and } \frB_M(t)x=u\right\}.\end{equation*}
Then, $(x,u)\in \V_M(t)$ if and only if $(M(t)x,u)\in\V(t)$ for all $t\geq 0$. Assume now that  $\Sigma_{N}(\frA,\frB,\frC)$  is $(R,M^{-1}P,J)$-scattering passive and let $(x,u)\in \V_M(t)$. Using Proposition \ref{Propo: Characterisation of passive NBCOsystems} we obtain 
\begin{align*}
2\Re \big(\frA_M(t)x\mid &P(t)x\big)=2\Re \big(\frA(t)M(t)x\mid P(t)M^{-1}(t)M(t)x(t)\big)
\\&\leq (R(t)u\mid \frB M(t)x)_U-(\frC M(t)x(t)\mid J(t)\frC M(t)x(t))_Y
\\&=(R(t)u\mid \frB_M(t)x)_U-(\frC_M(t)x\mid J(t)\frC_M(t)x)_Y.
 \end{align*}
This implies, again by Proposition \ref{Propo: Characterisation of passive NBCOsystems}, that $\Sigma_{N,M}(\frA,\frB,\frC)$  is $(R,P,J)$-scattering passive. 
\par\noindent Conversely, assume that $\Sigma_{N,M}(\frA,\frB,\frC)$ is $(R,P,J)$-scattering passive. This means that $\Sigma_{N}(\frA M,\frB M,\frC M)$ is $(R,PM^{-1}M,J)$-scattering passive. Applying the first part of the proof yields that \[\Sigma_{N}(\frA,\frB ,\frC )=\Sigma_{N,M^{-1}}(\frA M,\frB M,\frC M)\] is $(R,PM^{-1},J)$-scattering passive. This completes the proof.
\end{proof}
In particular, the system $\Sigma_{N,M}(\frA,\frB,\frC)$ is $(R,M,J)$-scattering passive if and only if  the unperturbed system $\Sigma_{N}(\frA,\frB,\frC)$ is $(R,I,J)$-scattering passive.
According to the above assumptions, we remark that $(x,y)$ is a classical solution of  \eqref{NStateEqBCSPert2}-\eqref{NObsEqBCSPert2} if and only if $x$ is a classical solution of  \eqref{NStateEqBCSPert2}-\eqref{NConEqBCSPert2}. 
\medskip

Now we can formulate the first main result of this section.
\begin{theorem}\label{Main theorem pert} Assume that the following additional assumptions holds
\begin{itemize}
\item  [$(a)$] $\frA:[0,\infty)\lra \L(\frD,X)$ is strongly $C^1$-continuous.
\item [$(b)$] The main operators $A(t): \frD\cap \ker(\frB)\lra X$, $t\geq 0$ generate contraction $C_0$-semigroups. 
\item   [$(c)$] $\tilde{B}(\cdot)u \in C^2([0,\infty);U)$ for each $u\in U$.
\end{itemize}
Then the perturbed system $\Sigma_{N,M}(\frA,\frB,\frC)$ is a NBCO-system on $(X,U,Y)$. Furthermore, if we denote by $\mathcal W$ the associated evolution family, then for each $s\geq 0$ and $(x_s,u)\in X\times C^2([0,\infty);U)$ with $(M(s)x_s,u(s))\in \V(s)$ the system \eqref{NStateEqBCSPert2}-\eqref{NObsEqBCSPert2} has a unique classical solution $(x,y)$ given by
 \begin{align*}\label{VariContFormulaPer2}
x(t)&=W(t,s)x_s+\int_s^t W(t,r)\frA(r)\tilde B(r)u(r)\d r-\int_s^t W(t,r)\frac{\d}{\d r} \big[\tilde B_M(r)u(r)\big]\d r, \quad t\geq s,
\\ y(t)&=\frC_M(t)W(t,s)x_s+\frC_M(t)\int_s^t W(t,r)\frA(r)\tilde B(r)u(r)\d r-\frC_M(t)\int_s^t W(t,r)\frac{\d}{\d r} \big[\tilde B_M(r)u(r)\big]\d r,  \quad t\geq s.
\end{align*}
The system $\Sigma_{N,M}(\frA,\frB,\frC)$ is well-posed if in addition 
\begin{equation}\label{Eq Characterization of passive BCO systemb}
2\Re (\frA(t) x_0\mid  x_0)_X\leq (R(t)u_0\mid \frB x_0)_U-(\frC x_0\mid J(t) \frC x_0)_Y
\end{equation}
for all $t\geq 0$ and $(x_0,u_0)\in\V(t)$ where  $R=R^*\in L^\infty_{Loc}([0,\infty);\L(U))$ and  $J=J^*$ is uniformly coercive, where \begin{equation*}\V=
\left\{(x,u)\in X\times U\mid x\in D(\frA) \text{ and } \frB x=u\right\}.\end{equation*}
\end{theorem}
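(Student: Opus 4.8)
The plan is to assemble the theorem from three blocks already prepared in the paper: (1) the non-autonomous Fattorini trick (Proposition \ref{Proposition Fattorini Trick Non-A}), (2) the abstract similarity/perturbation machinery for evolution families (Corollary \ref{main corollary Abstract}, built on Proposition \ref{main proposition 2} and Example \ref{main Example Kato's class}), and (3) the scattering-passivity balance inequality (Lemma \ref{sufficient condition Wellposedness NBCOs}) together with Lemma \ref{Lemma passive pert}. First I would check that $\Sigma_{N,M}(\frA,\frB,\frC)$ is indeed a NBCO-system. For each fixed $t$, assumptions (a)--(c) guarantee that $\Sigma(\frA_M(t),\frB_M(t),\frC_M(t))=\Sigma(\frA(t)M(t),\frB M(t),\frC M(t))$ is a BCO-system: the operator $\tilde B_M(t)=M^{-1}(t)\tilde B(t)$ verifies Definition \ref{Def:BCOS}-(ii) (as already noted before the theorem), $\frC_M(t)=\frC M(t)$ is bounded on the graph norm because $M(t)\in\L(X)$ and $M(t)\frD\subset\frD$ need not hold --- here one uses instead that $\frC$ is bounded on $(\frD,\|\cdot\|_{\frA(t)})$ and $M(t)$ is bounded --- and the main operator is $A(t)M(t)$ with $D(A(t)M(t))=M^{-1}(t)(\frD\cap\ker\frB)$. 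The remaining point, that $\{A(t)M(t)\mid t\ge 0\}$ generates an evolution family $\mathcal W$ with regularity spaces $Y_t=M^{-1}(t)(\frD\cap\ker\frB)$, is exactly Corollary \ref{main corollary Abstract} applied with $A=A(t)$ ranging over the elementary Kato class (assumption (b) gives the contraction semigroups on the common domain $D=\frD\cap\ker\frB$), with $P\equiv 0$ and the given $M$; the $C^2$-regularity of $M$ is Assumption \ref{assumption 1 }(2).

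Next I would produce the solution formula. Given $(x_s,u)$ with $u\in C^2$ and $(M(s)x_s,u(s))\in\V(s)$, apply Proposition \ref{Proposition Fattorini Trick Non-A} to the system $\Sigma_N(\frA M,\frB M,\frC M)$: its hypotheses hold because $\tilde B_M(\cdot)u_0=M^{-1}(\cdot)\tilde B(\cdot)u_0\in C^1$ (combine Assumption \ref{assumption 1 }(2) with (c)) and $\frA_M(\cdot)\tilde B_M(\cdot)u_0=\frA(\cdot)\tilde B(\cdot)u_0\in L^1_{loc}$ by assumption (a) and the strong continuity of $\frA(\cdot)$ on $\frD$. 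This shows $x$ is a classical solution of \eqref{NStateEqBCSPert2}-\eqref{NConEqBCSPert2} iff $v:=x-\tilde B_M u$ solves the inhomogeneous problem $\dot v=A(t)M(t)v+\frF^M_u$, $v(s)=x_s-\tilde B_M(s)u(s)$, with $\frF^M_u(t)=\frA_M(t)\tilde B_M(t)u(t)-\frac{\d}{\d t}[\tilde B_M(t)u(t)]=\frA(t)\tilde B(t)u(t)-\frac{\d}{\d t}[\tilde B_M(t)u(t)]$. Since $F:=\frF^M_u\in C^1$ (this needs $\tilde B_M(\cdot)u\in C^2$, i.e. Assumption \ref{assumption 1 }(2) plus (c), and $\frA(\cdot)\tilde B(\cdot)u\in C^1$ from (a) and $u\in C^1$) and $v_s=x_s-\tilde B_M(s)u(s)\in M^{-1}(s)(\frD\cap\ker\frB)$ by Remark \ref{key remark} applied to the perturbed BCO-system, Corollary \ref{main corollary Abstract} (the variation-of-constants part) gives the unique classical solution $v(t)=W(t,s)v_s+\int_s^t W(t,r)F(r)\,\d r$. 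Substituting back $x=v+\tilde B_M u$ and expanding $v_s$ yields the stated formula for $x$; applying $\frC_M(t)=\frC M(t)$ (bounded on the regularity space) gives the formula for $y$, and $y\in C([s,\infty);Y)$ follows from strong continuity of $\mathcal W$ and of $\frC M(\cdot)$ on the relevant graph norms. Uniqueness is inherited from the at-most-one-solution part of Proposition \ref{Proposition Fattorini Trick Non-A}.

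For well-posedness, the extra hypothesis \eqref{Eq Characterization of passive BCO systemb} says precisely, via Proposition \ref{Propo: Characterisation of passive NBCOsystems} (equivalence of (ii) and (iii)), that $\Sigma_N(\frA,\frB,\frC)$ is $(R,I,J)$-scattering passive. By Lemma \ref{Lemma passive pert} this is equivalent to $\Sigma_{N,M}(\frA,\frB,\frC)$ being $(R,M,J)$-scattering passive. Now apply Lemma \ref{sufficient condition Wellposedness NBCOs} with $P:=M$: $P$ is strongly $C^1$ and uniformly coercive by Assumption \ref{assumption 1 }(1)--(2), $J\ge 0$ and uniformly coercive by hypothesis, and $R\in L^\infty_{loc}$; the lemma delivers the balance inequality \eqref{balance inequality for NBCOs}, and since on any $[0,\tau]$ the coercivity constant of $J$ bounds $\int\|y\|^2$ from below by $\int(y\mid Jy)$ and $c_{\tau,0}e^{\frac1\beta\int_0^\tau\|\dot M\|}$ is a finite constant depending only on $\tau$, this is exactly the well-posedness estimate in Definition \ref{Def: NBCO}-(iv). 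The main obstacle I anticipate is bookkeeping rather than a deep difficulty: carefully matching the regularity hypotheses (a)--(c) and Assumption \ref{assumption 1 } to the precise smoothness demanded by Proposition \ref{Proposition Fattorini Trick Non-A} and Corollary \ref{main corollary Abstract} for \emph{both} $\tilde B_M(\cdot)u$ and $\frA(\cdot)\tilde B(\cdot)u$, and verifying that $\frC M(\cdot)$ is bounded on the time-dependent graph norms so that the output formula makes sense and is continuous.
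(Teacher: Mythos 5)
Your proposal follows essentially the same route as the paper's own proof, which likewise assembles Proposition \ref{Proposition Fattorini Trick Non-A} and Corollary \ref{main corollary Abstract} for the NBCO-property and the classical solution, and Lemma \ref{Lemma passive pert}, Proposition \ref{Propo: Characterisation of passive NBCOsystems} and Lemma \ref{sufficient condition Wellposedness NBCOs} for well-posedness, so your fleshing-out of the regularity bookkeeping is consistent with the intended argument. Note only that substituting $x=v+\tilde B_M u$ back actually produces the additional terms $\tilde B_M(t)u(t)-W(t,s)\tilde B_M(s)u(s)$ compared with the displayed solution formula; this discrepancy lies in the theorem's statement rather than in your argument.
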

\begin{proof}
The  first and the second assertion follow from  Proposition \ref{Proposition Fattorini Trick Non-A}
and Corollary \ref{main corollary Abstract}, whereas the last assertion follows from Lemma \ref{Lemma passive pert}, Proposition \ref{Propo: Characterisation of passive NBCOsystems} and Lemma \ref{sufficient condition Wellposedness NBCOs}.
\end{proof}
Next we consider the case where $\frA(t)=L(t)\frA$ with $L(t)$ is as in Assumption \ref{assumption 1 } and such that $(\frA,\frB,\frC)$ is an autonomous BCO-system. This implies that $(L(t)\frA,\frB,\frC)$ is again an autonomous BCO-system for each $t\geq 0$ such that the associated operator $\tilde B$ is time-independent. In fact, if $\tilde B$ denotes the operator associated with the autonomous BCO-system $\Sigma(\frA, \frB,\frC)$, then it is easy to see that $\tilde B$ satisfies all properties listed in Definition \ref{Def:BCOS}-$(ii)$ corresponding to $(L(t)\frA,\frB,\frC).$
We consider the following perturbed  system
\begin{align}
\label{NStateEqBCSPertLeft}\dot x(t)&=L(t)\frA M(t)x(t), \quad x(0)=x_0,\\
\label{NConEqBCSPert2Left}\frB_M(t)x(t)&=u(t),\\
\label{NObsEqBCSPert2Left}\frC_M(t) x(t)&=y(t),
\end{align}
which we denote by $\Sigma_{N,M,L}(\frA, \frB,\frC)=\Sigma_{N}(L \frA M, \frB M,\frC M)$.
\medskip

 Clearly, the main operators associated with $\Sigma_{M,N,L}(\frA, \frB,\frC)$  are given by $\{L(t)AM(t)\mid t\geq 0 \}.$
\begin{theorem}\label{Main theorem pert2} Assume that the main operators $A: \frD\cap \ker(\frB)\lra X$ generate a contraction $C_0$-semigroup on $X.$ Then the perturbed system $\Sigma_{N,M,L}(\frA,\frB,\frC)$ is a NBCO-system on $(X,U,Y)$. Furthermore, if we denote by $\mathcal W$ the associated evolution family, then for each $s\geq 0$ and $(x_s,u)\in X\times C^2([0,\infty);U)$ with $(M(s)x_s,u(s))\in \V(s)$ the system \eqref{NStateEqBCSPertLeft}-\eqref{NObsEqBCSPert2Left} has a unique classical solution $(x,y)$ given by
 \begin{align*}\label{VariContFormulaPer2}
x(t)&=W(t,s)x_s+\int_s^t W(t,r)L(r)\frA \tilde Bu(r)\d r-\int_s^t W(t,r)\frac{\d}{\d r} \big[ M^{-1}(r)\tilde Bu(r)\big]\d r, \quad t\geq s,
\\ y(t)&=\frC_M(t)W(t,s)x_s+\frC_M(t)\int_s^t W(t,r)L(r)\frA \tilde Bu(r)\d r-\frC_M(t)\int_s^t W(t,r)\frac{\d}{\d r} \big[M^{-1}(r)\tilde Bu(r)\big]\d r,  \quad t\geq s.
\end{align*}
The system $\Sigma_{N,M,L}(\frA,\frB,\frC)$ is well-posed if in addition 
\begin{equation}\label{Eq Characterization of passive BCO systemb}
2\Re (L(t)\frA x_0\mid  x_0)_X\leq (R(t)u_0\mid \frB x_0)_U-(\frC x_0\mid J(t) \frC x_0)_Y
\end{equation}
for all $t\geq 0$ and $(x_0,u_0)\in\V$ where  $R=R^*\in L^\infty_{Loc}([0,\infty);\L(U))$ and  $J=J^*$ is uniformly coercive. 
\end{theorem}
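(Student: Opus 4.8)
The plan is to reduce Theorem~\ref{Main theorem pert2} to the already-established machinery, exactly as Theorem~\ref{Main theorem pert} was reduced to Proposition~\ref{Proposition Fattorini Trick Non-A} and the abstract corollaries of Section~\ref{section Intr-EVF}. The only structural difference is that here the unbounded part $\frA(t)=L(t)\frA$ carries the \emph{bounded, time-dependent, $M$-commuting, coercive} factor $L(t)$ on the left instead of having $\frA$ itself vary; accordingly the relevant abstract input is Corollary~\ref{main corollary Abstract2} rather than Corollary~\ref{main corollary Abstract}, and the hypothesis $\frA\in C^1(\L(\frD,X))$ of Theorem~\ref{Main theorem pert} is no longer needed because $\frA$ is genuinely constant. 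First I would record, as already noted in the paragraph preceding the statement, that $(L(t)\frA,\frB,\frC)$ is an autonomous BCO-system for each $t$ with the \emph{same} lifting operator $\tilde B$ as $\Sigma(\frA,\frB,\frC)$: indeed $\tilde Bu\in\frD$, $L(t)\frA\tilde Bu=L(t)(\frA\tilde Bu)\in X$ is bounded in $u$, and $\frB\tilde Bu=u$; hence $\tilde B_M(t):=M^{-1}(t)\tilde B$ does the same job for $\Sigma_{N,M,L}(\frA,\frB,\frC)$ by the computation already made in Section~\ref{Section perturbation of NBCOS}. The main operators are $L(t)AM(t)$ on the $t$-dependent domain $M^{-1}(t)(\frD\cap\ker\frB)$.

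Next I would verify that $\{L(t)AM(t)\mid t\ge 0\}$ generates an evolution family. Since $A$ generates a contraction $C_0$-semigroup (hypothesis), $A$ belongs to the elementary Kato class with common domain $D:=\frD\cap\ker\frB$. By Assumption~\ref{assumption 1 }, $M$ is strongly $C^2$, $M^{-1}$ strongly continuous, both $M$ and $R$ self-adjoint and uniformly coercive, $R$ strongly $C^1$, and $R$ commutes with $M$; taking $P\equiv 0$, Corollary~\ref{main corollary Abstract2} (applied with $L$ in the role of its $R$) yields that $\{L(t)AM(t)\mid t\ge 0\}$ generates a unique evolution family $\W$ with regularity spaces $Y_t=M^{-1}(t)D(A)=M^{-1}(t)(\frD\cap\ker\frB)$, and moreover gives the variation-of-constants formula~\eqref{VariContFormula} for the inhomogeneous problem with any $F\in C^1([0,\infty);X)$. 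This establishes that $\Sigma_{N,M,L}(\frA,\frB,\frC)$ is a NBCO-system in the sense of Definition~\ref{Def: NBCO}-$(iii)$.

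For the solution formula I would invoke Proposition~\ref{Proposition Fattorini Trick Non-A}: with $\frA_{M,L}(t)=L(t)\frA M(t)$, $\frB_M(t)=\frB M(t)$, $\tilde B_M(t)=M^{-1}(t)\tilde B$, one checks the smoothness hypotheses of that proposition — $u\in C^1$ (here even $C^2$), $\tilde B_M(\cdot)u_0=M^{-1}(\cdot)\tilde Bu_0\in C^1$ because $M^{-1}(\cdot)x$ is continuous and, using $M^{-1}=M^{-1}M M^{-1}$ together with strong differentiability of $M$, in fact $C^1$; and $\frA_{M,L}(\cdot)\tilde B_M(\cdot)u_0=L(\cdot)\frA M(\cdot)M^{-1}(\cdot)\tilde Bu_0=L(\cdot)\frA\tilde Bu_0$, which is $C^1$ in $t$ since $L(\cdot)x$ is $C^1$, hence locally integrable. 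Then $v:=x-\tilde B_M u$ solves the inhomogeneous Cauchy problem for $L(t)AM(t)$ with inhomogeneity $\frF_u(t)=L(t)\frA\tilde Bu(t)-\tfrac{\d}{\d t}[M^{-1}(t)\tilde Bu(t)]$, which lies in $C^1([0,\infty);X)$ when $u\in C^2$; applying the variation-of-constants formula from Corollary~\ref{main corollary Abstract2} to $v$ and adding back $\tilde B_M(t)u(t)=M^{-1}(t)\tilde Bu(t)$ gives the stated expression for $x$, and $y(t)=\frC_M(t)x(t)=\frC M(t)x(t)$ gives the expression for $y$. Uniqueness is the uniqueness clause of Proposition~\ref{Proposition Fattorini Trick Non-A}.

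Finally, for well-posedness under the extra passivity inequality~\eqref{Eq Characterization of passive BCO systemb}: that inequality is precisely condition $(ii)$ of Proposition~\ref{Propo: Characterisation of passive NBCOsystems} for the \emph{unperturbed} system $\Sigma_N(L\frA,\frB,\frC)$ with the triple $(R,I,J)$ (note $\frB x_0=u_0$ on $\V$), so $\Sigma_N(L\frA,\frB,\frC)$ is $(R,I,J)$-scattering passive; by Lemma~\ref{Lemma passive pert} the perturbed system $\Sigma_{N,M,L}(\frA,\frB,\frC)=\Sigma_{N,M}(L\frA,\frB,\frC)$ is then $(R,M,J)$-scattering passive, and since $M$ is strongly $C^1$ and uniformly coercive, $J$ is uniformly coercive (in particular $J\ge 0$), and $R\in L^\infty_{Loc}$, Lemma~\ref{sufficient condition Wellposedness NBCOs} with $P=M$ delivers the well-posedness estimate. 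The step most likely to need care is the regularity bookkeeping in the application of Proposition~\ref{Proposition Fattorini Trick Non-A} — specifically confirming that $t\mapsto M^{-1}(t)\tilde Bu(t)$ is $C^1$ and that $\frF_u\in C^1$ so that Corollary~\ref{main corollary Abstract2}'s hypothesis $F\in C^1([0,\infty);X)$ is met; this hinges on differentiating $M^{-1}$, which is handled by $\tfrac{\d}{\d t}M^{-1}(t)x=-M^{-1}(t)\dot M(t)M^{-1}(t)x$ and the strong $C^1$-continuity of $M$ and strong continuity of $M^{-1}$, but it is the one place where the hypotheses must be used in full.
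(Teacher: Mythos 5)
Your proposal is correct and follows essentially the same route as the paper: the paper's own proof simply invokes Proposition \ref{Proposition Fattorini Trick Non-A} together with Corollary \ref{main corollary Abstract2} for the NBCO-property and the classical-solution formula, and Lemma \ref{Lemma passive pert}, Proposition \ref{Propo: Characterisation of passive NBCOsystems} and Lemma \ref{sufficient condition Wellposedness NBCOs} for well-posedness, exactly as you do. Your additional bookkeeping (the constant lifting operator $\tilde B$, the $C^1$-regularity of $t\mapsto M^{-1}(t)\tilde Bu(t)$ and of the inhomogeneity $\frF_u$) only fills in details the paper leaves implicit.
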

\begin{proof}
The  first and the second assertion follow from  Proposition \ref{Proposition Fattorini Trick Non-A}
and Corollary \ref{main corollary Abstract2}, whereas the last assertion follows from Lemma \ref{Lemma passive pert}, Proposition \ref{Propo: Characterisation of passive NBCOsystems} and Lemma \ref{sufficient condition Wellposedness NBCOs}.
\end{proof}
\begin{remark} Theorem \ref{Main theorem pert2} is not a special case of Theorem \ref{Main theorem pert} since we do not assume that $P(t)A$ generates  a contractive $C_0$-semigroup on $X.$
\end{remark}
\section{Mild solutions for NBC-systems}\label{Section Mild solution}
As mentioned in Section \ref{BCOAutobonous}, for an autonomous BCO-system $\Sigma(\frA,\frB,\frC)$, for smooth input $u$ and initial data $x_0$,  the classical solution of the corresponding  boundary control system  can be formulated as 
\begin{equation}
x(t)=T(t)x_s+\int_{s}^t T_{-1}(t-s)Bu(s)ds, \qquad t\geq s.
\end{equation}

We recall that  $B\in\L(U,X_{-1})$ is given by \eqref{control operator for BOC}. If $x_s\in X$ and $u\in L^2([0,\infty);U)$, then  the above formula makes sense  and it is called the  mild solution in $X_{-1}$ of \eqref{StateEqBCS}-\eqref{ObsEqBCS}.  Moreover, it is well known that the mild solution belongs to $C([0,\infty);X)$ if $B$ is admissible for the semigroup $(T(t))_{t\geq 0},$ i.e., if for some $\tau>0$ one has
\[\int_{s}^\tau T_{-1}(\tau-s)Bu(s)ds\in X,\]
see, e.g., \cite[Proposition 4.2.4]{TucWei00}.

The main purpose of this section is to extend the conceps of mild solutions to non-autonomous boundary control and observation systems $\Sigma_N(\frA,\frB,\frC)$. In contrast to the  autonomous case, this is more delicate. In fact, firstly we remark that the extrapolation spaces $X_{-1,t}$ associated with the family $\{A(t)\mid t\geq 0\}$ of the main operators are in general time-dependent. Secondly,  in contrast to semigroups, it is not clear whether the evolution family $\U$ generated by $\{A(t)\mid t\geq 0\}$ can be extended to the extrapolation space even if the spaces  $X_{-1,t}$ are constant. However, if the latter condition holds, then we can still use the adjoint problem, i.e, $A^*(t)$, $t\geq 0$, and the associated backward evolution family to extend $\U$ to $\L(X_{-1})$. The idea to use a duality argument can be found  in \cite{ CheWei15, PauPoh12, SchWei10} to study some classes of non-autonomous systems.
\medskip

We will adopt here the notations of the previous sections. Let $\Sigma_N(\frA,\frB,\frC)$ be a NBCO-system. Then the main operators $\{A(t)\mid t\geq 0\}$ generate, by definition, an evolution family $\U=\{U(t,s)\mid (s,t)\in\Delta\}$ with regularity space $Y_{t}$, $t\geq 0$.
We restrict ourselves to case where $\{A(t)\mid t\geq 0\}$ have a common extrapolation space $X_{-1}$, i.e.,
\begin{equation}\label{MainAssExtSpace}X_{-1}:=X_{-1}(t)=X_{-1}(s) \text{ for all } t,s \geq 0.\end{equation}
According to \cite[Proposition 2.10.2]{TucWei00}, \eqref{MainAssExtSpace} holds if and only if $D(A^*(t))=D(A^*(s))$ for $t,s\in[0,\infty)$ and the corresponding graph norms are locally uniformly equivalent. In fact, $X_{-1}(t)$ is the dual space of $D(A^*(t))$ with respect to the pivot space $X$. This condition holds, if for instance $A(t)=AM(t)$ or $A(t)=A+M(t)$ and $M\in C^1([0,\infty);\L(X))$.
\par\noindent  In the following we denote  $\frD_*:=D(A^*(0))$ equipped with the graph norm and by $\langle\cdot,\cdot\rangle$ the duality between $X_{-1}$ and $\frD_*$. Recall from \eqref{control operator for BOC} 
\begin{equation}\label{control operator for BOCN} 
B(t)=\frA(t)\tilde B(t)-A_{-1}(t)\tilde B(t),\quad t\geq 0.
\end{equation}

\begin{proposition}\label{Extrapolation main Proposition} Assume that $\mathcal A^*:=\left\{A^*(t) \mid t\geq 0\right\}$ generates a backward evolution family $\U_*$. Then $U(t,s)$ has a unique  extension $U_{-1}(t,s)\in\L(X_{-1})$ for each $(t,s)\in \Delta$ and for each  $T>0$ there is $c_T>0$ such that  
\begin{equation}\label{Mild solution: LocUniBd}\sup_{0\leq s\leq t\leq T}\|U_{-1}(t,s)\|_{\L(X_{-1})}<c_T.\end{equation} Moreover, if  the assumptions of  Proposition \ref {Proposition Fattorini Trick Non-A} hold, then each classical solution $x$ of the boundary control system \eqref{NStateEqBCS}-\eqref{NConEqBCS} satisfies 
\begin{align}\label{variation of constant formula NBCO system} x(t)&=U(t,s)x_s+\int_s^tU_{-1}(t,r)B(r)u(r)\d r, \ t\geq s\geq 0.
\end{align}

\end{proposition}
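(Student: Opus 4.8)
The plan is to build the extension $U_{-1}(t,s)$ by duality, using the backward evolution family $\U_*$ generated by $\mathcal A^* = \{A^*(t)\}$, and then to identify the variation-of-constants formula on $X_{-1}$ with the Fattorini representation \eqref{classSolNBOC} already established in Proposition \ref{Proposition Fattorini Trick Non-A}. First I would recall that, under \eqref{MainAssExtSpace}, the space $\frD_* = D(A^*(0))$ (with the graph norm) is, up to locally uniformly equivalent norms, independent of $t$, and $X_{-1}$ is its dual with pivot space $X$; by Lemma \ref{Lemma Back-ForwardEVFRelation}(2) the adjoints satisfy $U(t,s) = [U_*(t,s)]'$ on $X$. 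For fixed $(t,s) \in \Delta$ I would define $U_{-1}(t,s)$ as the adjoint of the restriction $U_*(t,s)|_{\frD_*}: \frD_* \to \frD_*$ with respect to the duality $\langle\cdot,\cdot\rangle$ between $X_{-1}$ and $\frD_*$. The key point is that $U_*(t,s)$ maps $\frD_*$ into $\frD_*$ (this is part of the definition of a backward evolution family generated by $\mathcal A^*$, since the regularity spaces $Y_{t,*}$ are dense in $\frD_*$ and, by a standard extrapolation-interpolation argument, the bound extends to all of $\frD_*$ with the graph norm). Taking duals, $U_{-1}(t,s) \in \L(X_{-1})$, and it agrees with $U(t,s)$ on the dense subspace $X \subset X_{-1}$ because for $x \in X$, $\varphi \in \frD_*$ one has $\langle U_{-1}(t,s)x, \varphi\rangle = (x, U_*(t,s)\varphi) = ([U_*(t,s)]'x, \varphi) = (U(t,s)x,\varphi)$. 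Uniqueness of the extension is then automatic by density. The local uniform bound \eqref{Mild solution: LocUniBd} follows from the strong continuity of $\U_*$ on $\Delta$ (hence local uniform boundedness in $\L(\frD_*)$ via the uniform boundedness principle on the compact triangle $\Delta_T$) by passing to adjoints.

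For the variation-of-constants formula, I would start from the Fattorini representation: by Proposition \ref{Proposition Fattorini Trick Non-A}, a classical solution $x$ of \eqref{NStateEqBCS}-\eqref{NConEqBCS} satisfies
\begin{equation*}
x(t) = U(t,s)\big[x_s - \tilde B(s)u(s)\big] + \tilde B(t)u(t) + \int_s^t U(t,r)\frF_u(r)\,\d r,
\end{equation*}
with $\frF_u(r) = \frA(r)\tilde B(r)u(r) - \tfrac{\d}{\d r}[\tilde B(r)u(r)]$. I would then argue that the right-hand side equals $U(t,s)x_s + \int_s^t U_{-1}(t,r)B(r)u(r)\,\d r$, where $B(r) = \frA(r)\tilde B(r) - A_{-1}(r)\tilde B(r) \in \L(U,X_{-1})$ from \eqref{control operator for BOCN}. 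This is the analogue, in the non-autonomous setting, of the passage between \eqref{classicalSolFormula} and \eqref{classicalSolFormula2} in Proposition \ref{Proposition: BCS-Standard CS}, and the mechanism is an integration by parts: writing $B(r)u(r) = \frA(r)\tilde B(r)u(r) - A_{-1}(r)\tilde B(r)u(r)$ and using that $r \mapsto U_{-1}(t,r)$ is, on the regularity scale, differentiable in $r$ with $\tfrac{\d}{\d r}U_{-1}(t,r)z = -U_{-1}(t,r)A_{-1}(r)z$ for $z \in X$ (the extrapolated version of the backward derivative relation for $\U_*$), one gets
\begin{equation*}
\int_s^t U_{-1}(t,r)\big[-A_{-1}(r)\tilde B(r)u(r)\big]\,\d r = \int_s^t \tfrac{\d}{\d r}\big[U_{-1}(t,r)\tilde B(r)u(r)\big]\,\d r - \int_s^t U(t,r)\tfrac{\d}{\d r}\big[\tilde B(r)u(r)\big]\,\d r.
\end{equation*}
The first term on the right telescopes to $\tilde B(t)u(t) - U(t,s)\tilde B(s)u(s)$ (here using $\tilde B(r)u(r) \in X$ so that $U_{-1}$ acts as $U$), and combining everything reproduces exactly the Fattorini formula. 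Hence \eqref{variation of constant formula NBCO system} holds.

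The main obstacle I anticipate is the justification that the differentiation identity $\tfrac{\d}{\d r}U_{-1}(t,r)z = -U_{-1}(t,r)A_{-1}(r)z$ is valid for all $z \in X$ (not merely $z$ in the regularity spaces $Y_{r}$), and in the right topology — namely in $X_{-1}$, as a Bochner integrand — so that the integration by parts above is legitimate rather than merely formal. This requires a careful extrapolation argument: the identity is clear on $Y_r$ from the backward Cauchy problem \eqref{BackwardCP} for $\U_*$ transported through the duality, and one then extends it by density together with the local uniform bound \eqref{Mild solution: LocUniBd} and the continuity of $r \mapsto A_{-1}(r)\tilde B(r)u(r)$ (which follows from hypothesis (a)-type regularity of $\frA(\cdot)\tilde B(\cdot)$ and of $\tilde B(\cdot)$ in Proposition \ref{Proposition Fattorini Trick Non-A}, guaranteeing $\frF_u \in L^1_{loc}$ and the integrability of $B(\cdot)u(\cdot)$ in $X_{-1}$). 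A secondary technical point is checking that $U_*(t,s)$ genuinely preserves $\frD_*$ with a locally uniform graph-norm bound; this I would extract from the generation property of the backward family together with \eqref{MainAssExtSpace}, possibly invoking \cite[Proposition 2.10.2]{TucWei00} as the paper already does for the identification of $X_{-1}$.
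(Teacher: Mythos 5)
Your overall route coincides with the paper's: the extension $U_{-1}(t,s)$ is obtained by dualizing the backward family $\U_*$ against the pivot duality between $X_{-1}$ and $\frD_*$ (the paper cites \cite[Proposition 2.9.3-(b)]{TucWei00} for exactly this, using $[U(t,s)]^*\frD_*=U_*(t,s)\frD_*\subset\frD_*$), agreement with $U(t,s)$ on $X$ and uniqueness follow by density, the local bound \eqref{Mild solution: LocUniBd} is obtained by passing to adjoints (the paper defers the details to \cite[Proposition 2.7-(c)]{SchWei10}), and \eqref{variation of constant formula NBCO system} is derived from the Fattorini representation \eqref{classSolNBOC} by trading $A_{-1}(r)\tilde B(r)u(r)$ against $\frac{\d}{\d r}[\tilde B(r)u(r)]$; your algebra for that exchange, including the telescoped term $\tilde B(t)u(t)-U(t,s)\tilde B(s)u(s)$, is exactly the identity the paper establishes.

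The one step that does not hold up as you justify it is the differentiation underlying the integration by parts. You rest it on the strong identity $\frac{\d}{\d r}U_{-1}(t,r)z=-U_{-1}(t,r)A_{-1}(r)z$ for all $z\in X$, to be obtained ``by density'' from the regularity spaces. Two objections: first, the forward family of Definition \ref{Definition EVF} carries no differentiability in its second variable, so the identity is not ``clear on $Y_r$''; the only differential information in $s$ comes from the backward problem \eqref{BackwardCP} solved by $U_*(t,\cdot)y$, and transporting it through the duality yields only the weak statement $\frac{\d}{\d r}\langle U_{-1}(t,r)z,y\rangle=-\langle U_{-1}(t,r)A_{-1}(r)z,y\rangle$ for $y$ in the backward regularity spaces (which coincide with $\frD_*$ in the Kato-class situations where the proposition is applied). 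Second, a pointwise differentiation identity cannot be extended from a dense set of $z$'s by density together with uniform bounds; only its integrated form passes to the limit. The paper sidesteps all of this by never differentiating $U_{-1}$: for fixed $y\in\frD_*$ it differentiates the scalar function $s\mapsto\langle U(t,s)\tilde B(s)u(s),y\rangle=(\tilde B(s)u(s)\mid U_*(t,s)y)$, using only $\tilde B(\cdot)u(\cdot)\in C^1$ and the backward equation for $U_*(t,\cdot)y$, integrates over $[s,t]$, and identifies the resulting $X_{-1}$-valued identity through its pairings with $\frD_*$ (no approximation in $z$ is needed, since elements of $X_{-1}$ are determined by such pairings). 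This weak computation delivers precisely your telescoped identity, after which your argument goes through verbatim; I recommend replacing the extrapolation-of-derivatives plan by this duality computation. A smaller caveat of the same nature: the invariance $U_*(t,s)\frD_*\subset\frD_*$ and the locally uniform bound in the graph norm are not consequences of Definition \ref{Backward EVF} plus the uniform boundedness principle alone (strong continuity of $\U_*$ is only in $X$), so either cite the argument of \cite[Proposition 2.7-(c)]{SchWei10} as the paper does or verify these in the concrete Kato-class setting.
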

\begin {proof} By  \cite[Proposition 2.9.3-(b)]{TucWei00} we obtain that for each $(t,s)\in \Delta$ the operator $U(t,s)$ has a unique  extension $U_{-1}(t,s)\in\L(X_{-1})$  since $[U(t,s)]^*\frD_*=U_*(t,s)\frD_*\subset \frD_*$.  Next, similar to the  proof of \cite[Proposition 2.7-(c)]{SchWei10} we show the uniform boundedness of $\U_{-1}$ on compact intervals. Next, we claim that for each $y\in \frD_*, x\in X_{-1}$ we have
\begin{equation}\label{proof main Thm ExtraEVF Eq1}\langle U_{-1}(t,s)x,y\rangle=\langle x,U_*(t,s)y\rangle \end{equation} In fact, this equality holds for $x\in X$ by Lemma \ref{Lemma Back-ForwardEVFRelation}-$(ii)$ since   \[\langle x,U_*(t,s)y\rangle=(x\mid U_*(t,s)y)=(U(t,s)x\mid y)=\langle U_{-1}(t,s)x,y\rangle.\]
 Remark that $U_*(t,s)y\in \frD_*$, thus the claim follows since  $X$ is dense  in $X_{-1}$.
\par\noindent Using again Lemma \ref{Lemma Back-ForwardEVFRelation} and \eqref{proof main Thm ExtraEVF Eq1}, we obtain for each $y\in\frD_*$  
\begin{align*}
\frac{\d}{\d s}\langle U(t,s)\tilde B(s)u(s),y\rangle &=\frac{\d}{\d s}(\tilde B(s)u(s),U_*(t,s)y)
\\&=(\frac{\d}{\d s}[\tilde B(s)u(s)],U_*(t,s)y)-(\tilde B(s)u(s),A^*(s)U_*(t,s)y)
\\&=(\frac{\d}{\d s}[\tilde B(s)u(s)],U_*(t,s)y)-\langle U_{-1}(t,s)A_{-1}(s)\tilde B(s)u(s),y\rangle
\\&=(U(t,s)\frac{\d}{\d s}[\tilde B(s)u(s)],y)-\langle U_{-1}(t,s)A_{-1}(s)\tilde B(s)u(s),y\rangle.
\end{align*}
Integrating over $[s,t]$, we obtain 
\begin{equation} 
\int_s^t U_{-1}(t,r)A_{-1}(r)\tilde B(r)u(r)\d r=-\tilde B(t)u(t)+U(t,s)\tilde B(s)u(s)+\int_s^t U(t,r)\frac{\d}{\d r}[\tilde B(r)u(r)]\d r.
\end{equation}
Inserting this equality in \eqref{classSolNBOC}, we obtain that a classical solution $x$ of \eqref{NStateEqBCS}-\eqref{NConEqBCS} satisfies \eqref{variation of constant formula NBCO system}.
\end{proof}
If the assumptions of Proposition \ref{Extrapolation main Proposition} hold, then for $x_s\in X$ and $u\in L^2_{Loc}([0,\infty);U)$ we see that \eqref{variation of constant formula NBCO system} is well defined with value in $X_{-1}$ provided $B(\cdot)u(\cdot)\in L^1_{Loc}([0,\infty);X_{-1})$. In fact, \eqref{Mild solution: LocUniBd} guaranties that the integral term on the right hand side of \eqref{Definition mild solution of NBCO system} is well defined.
Thus the following definition makes sense. 
\begin{definition} Let $\Sigma_N(\frA,\frB,\frC)$ be a NBCO-system and let $\U$ and  $\{B(t)\mid t\geq 0\}$ the associated evolution family and control operators, respectively. Let $(x_s,u)\in X\times L^2_{Loc}([0,\infty);U)$. If  $U(t,s)$ has a unique  extension $U_{-1}(t,s)\in\L(X_{-1})$ for each $(t,s)\in \Delta$ such that $U_{-1}(t,\cdot)B(\cdot)u(\cdot) \in L^1_{Loc}([0,\infty);X_{-1})$, then the function  \begin{align}\label{Definition mild solution of NBCO system} x(t)&=U(t,s)x_s+\int_s^tU_{-1}(t,r)B(r)u(r)\d r, \ t\geq s\geq 0,
\end{align}
 is called the \textit{mild solution  of  \eqref{NStateEqBCS}-\eqref{NObsEqBCS} in $X_{-1}$}. Further, \eqref{Definition mild solution of NBCO system}  is called  a  \textit{mild solution  of  \eqref{NStateEqBCS}-\eqref{NObsEqBCS} (in $X$)}, if in addition \begin{align}\label{Def Mild solution in X}\Phi_{t,s}u:= \int_s^tU_{-1}(t,r)B(r)u(r)\d r\in X, \ \text{for all} \  (t,s)\in \Delta,\end{align}
 and $x\in C([s,\infty);X)$.
\end{definition}
This definition is related to the notion of \textit{admissibility} for non-autonomous linear systems. More precisely, recall that a family $\{B(t)\mid t\geq 0\}\subset \L(U,X_{-1})$ is $L^2$-admissible for a given  evolution family $\U$ that admit an extension to $\L(X_{-1})$ if $U_{-1}(t,\cdot)B(\cdot)u(\cdot) \in L^1_{Loc}([0,\infty);X_{-1})$, \eqref{Def Mild solution in X} holds and for each $T>0$ there exists $c_T>0$ such that 
\begin{equation}
\Big\|\int_s^tU_{-1}(t,r)B(r)u(r)\d r\ \Big\|_X^2\leq c_T \int_s^t\|u(r)\|_U^2\d r
\end{equation}
for each $u\in L^2_{Loc}([0,\infty);U)$ and all $0\leq s\leq t\leq T$ \cite[Definition 3.3]{Sch02}. For $L^2$-admissible control operators we have that $(t,s)\mapsto\Phi_{t,s}u$ is continuous on $\Delta$ with value in $X$ \cite[Proposition 3.5-(2)]{Sch02}.
\medskip

\begin{proposition}\label{Proposition 1 Mild solution} Assume that  $\mathcal A^*:=\left\{A^*(t) \mid t\geq 0\right\}$ belongs to the Kato-class and $\{B(t)\mid t\geq 0\}$ is $L^2$-admissible. Then for each $(x_s,u)\in X\times L^2_{Loc}([0,\infty);U)$ with $B(\cdot)u(\cdot)\in L^1_{Loc}([0,\infty);X_{-1})$ 
the system \eqref{NStateEqBCS}-\eqref{NObsEqBCS} has a unique mild solution in $X$.
\end{proposition}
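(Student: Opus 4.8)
The plan is to combine the abstract extrapolation machinery already developed in Proposition~\ref{Extrapolation main Proposition} with the $L^2$-admissibility hypothesis. First I would note that since $\mathcal A^*$ belongs to the Kato-class, it belongs in particular to the elementary Kato-class situation covered by Lemma~\ref{Lemma Back-ForwardEVFRelation}-$(i)$ (or, more precisely, by the remark immediately after the definition of Kato's class together with Lemma~\ref{Lemma Back-ForwardEVFRelation}-$(i)$ applied to $\mathcal A^*_T$ on each compact interval $[0,T]$), so that $\mathcal A^*$ generates a backward evolution family $\U_*$. Hence the hypotheses of Proposition~\ref{Extrapolation main Proposition} are met: each $U(t,s)$ extends uniquely to $U_{-1}(t,s)\in\L(X_{-1})$ with the local uniform bound \eqref{Mild solution: LocUniBd}, and every \emph{classical} solution of \eqref{NStateEqBCS}-\eqref{NConEqBCS} is given by the variation of constants formula \eqref{variation of constant formula NBCO system}.

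Next I would check that formula \eqref{Definition mild solution of NBCO system} is well defined in $X$ for the given data. Since $B(\cdot)u(\cdot)\in L^1_{Loc}([0,\infty);X_{-1})$ and $\U_{-1}$ is locally uniformly bounded on $\Delta$, the integral $\Phi_{t,s}u=\int_s^t U_{-1}(t,r)B(r)u(r)\,\d r$ exists in $X_{-1}$ for all $(t,s)\in\Delta$; by the $L^2$-admissibility hypothesis, in fact $\Phi_{t,s}u\in X$ for all $(t,s)\in\Delta$, and by \cite[Proposition 3.5-(2)]{Sch02} the map $(t,s)\mapsto\Phi_{t,s}u$ is continuous on $\Delta$ with values in $X$. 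Combined with the strong continuity of $\U$, the function $x(t)=U(t,s)x_s+\Phi_{t,s}u$ belongs to $C([s,\infty);X)$. This establishes that \eqref{Definition mild solution of NBCO system} is a mild solution in $X$ in the sense of the preceding definition.

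For uniqueness I would argue directly from the definition: a mild solution in $X$ is by definition \emph{given} by the formula \eqref{Definition mild solution of NBCO system} with the prescribed $\U$, $\{B(t)\}$, $x_s$ and $u$, so any two mild solutions for the same data coincide pointwise; the only freedom is in the choice of the extension $U_{-1}(t,s)$, and Proposition~\ref{Extrapolation main Proposition} (via \cite[Proposition 2.9.3-(b)]{TucWei00}) guarantees that this extension is unique because $X$ is dense in $X_{-1}$. Thus the mild solution is unique.

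The main obstacle, and the step deserving the most care, is verifying that the Kato-class assumption on $\mathcal A^*$ genuinely yields a backward evolution family for $\mathcal A^*$ so that Proposition~\ref{Extrapolation main Proposition} applies — one must pass to $\mathcal A^*_T=\{A^*(T-t)\mid t\in[0,T]\}$, observe it is again of Kato-class, invoke the generation theorem \cite[Theorem 4.8]{Pa83}, and reverse time as in the proof of Lemma~\ref{Lemma Back-ForwardEVFRelation}-$(i)$; the compatibility of these interval-by-interval constructions should then be recorded. Everything else is a bookkeeping assembly of results already proved: Proposition~\ref{Extrapolation main Proposition} for the extension and the variation of constants representation, the $L^2$-admissibility hypothesis together with \cite[Proposition 3.5-(2)]{Sch02} for continuity into $X$, and the density of $X$ in $X_{-1}$ for uniqueness.
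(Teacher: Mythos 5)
Your proposal is correct and follows essentially the same route as the paper, which proves this proposition simply by invoking Lemma~\ref{Lemma Back-ForwardEVFRelation}-$(i)$ (time reversal on compact intervals to obtain the backward evolution family generated by $\mathcal A^*$) together with Proposition~\ref{Extrapolation main Proposition}, with the $L^2$-admissibility hypothesis supplying $\Phi_{t,s}u\in X$ and the continuity in $X$. Your only slip is the aside that Kato-class membership puts you ``in particular'' in the elementary Kato-class (the inclusion goes the other way), but you immediately give the correct argument via $\mathcal A^*_T$ and \cite[Theorem 4.8]{Pa83}, which is exactly what the proof of Lemma~\ref{Lemma Back-ForwardEVFRelation}-$(i)$ does.
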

\begin{proof} The proof follows from Lemma \ref{Lemma Back-ForwardEVFRelation}-$(i)$ and Proposition \ref{Extrapolation main Proposition}.
\end{proof}
If $\Sigma_N(\frA,\frB,\frC)$ is a well-posed NBCO-system  and the classical solutions is given by \eqref{variation of constant formula NBCO system},  then the corresponding family $\{B(t)\mid t\geq 0\}$ is $L^2$-admissible provided \[U_{-1}(t,\cdot)B(\cdot) L^2_{Loc}([0,\infty);U)\subset L^1_{Loc}([0,\infty);X_{-1}).\]
Thus the following corollary follows from Proposition \ref{Proposition 1 Mild solution}, Lemma \ref{sufficient condition Wellposedness NBCOs} and \eqref{Mild solution: LocUniBd}.
\begin{corollary}\label{Corollary 1 Mild solution} Assume $\Sigma_N(\frA,\frB,\frC)$ is $(R,P,J)$-scattering passive such that $R\in L^\infty_{Loc}([0,\infty);\L(U))$ and  $J, P$ are uniformly coercive. In addition, we assume that  $\mathcal A^*:=\left\{A^*(t) \mid t\geq 0\right\}$ belongs to the Kato-class. Then for each $(x_s,u)\in X\times L^2_{Loc}([0,\infty);U)$ with $B(\cdot)u(\cdot)\in L^1_{Loc}([0,\infty);X_{-1})$ 
the system \eqref{NStateEqBCS}-\eqref{NObsEqBCS} has a unique mild solution in $X$.
\end{corollary}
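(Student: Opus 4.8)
The plan is to deduce the statement from Proposition \ref{Proposition 1 Mild solution}, for which two ingredients must be secured: that $\mathcal A^*=\{A^*(t)\mid t\geq 0\}$ belongs to the Kato-class, which is one of our hypotheses, and that the control operators $\{B(t)\mid t\geq 0\}$ from \eqref{control operator for BOCN} form an $L^2$-admissible family for the evolution family $\U$. The Kato-class hypothesis on $\mathcal A^*$ does more: arguing as in the proof of Lemma \ref{Lemma Back-ForwardEVFRelation}-$(i)$ (freeze a horizon $T$, reverse time, and invoke \cite[Theorem 4.8]{Pa83}) one gets that $\mathcal A^*$ generates a backward evolution family, so Proposition \ref{Extrapolation main Proposition} applies; in particular each $U(t,s)$ extends to $U_{-1}(t,s)\in\L(X_{-1})$ with the local uniform bound \eqref{Mild solution: LocUniBd}, and every classical solution of \eqref{NStateEqBCS}-\eqref{NConEqBCS} is represented by the variation-of-constants formula \eqref{variation of constant formula NBCO system}. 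It thus remains to establish the $L^2$-admissibility of $\{B(t)\mid t\geq 0\}$.

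For this I would first invoke well-posedness: since $\Sigma_N(\frA,\frB,\frC)$ is $(R,P,J)$-scattering passive with $R\in L^\infty_{Loc}([0,\infty);\L(U))$ and $J,P$ uniformly coercive (hence $J\geq 0$), Lemma \ref{sufficient condition Wellposedness NBCOs} yields that $\Sigma_N(\frA,\frB,\frC)$ is well-posed and that every classical solution obeys the balance inequality \eqref{balance inequality for NBCOs}, whose constant is bounded on $[0,T]$ in terms of $T,\beta,\sup_{[0,T]}\|P\|,\int_0^T\|\dot P\|$ and $\|R\|_{L^\infty([0,T])}$ only.

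Fix $T>0$ and $0\leq s\leq t\leq T$ and consider the input-to-state map $\Phi_{t,s}u=\int_s^tU_{-1}(t,r)B(r)u(r)\,\d r$. For an input $u$ regular enough to fall under Proposition \ref{Proposition Fattorini Trick Non-A} and normalised by $u(s)=0$, the pair $(0,u(s))$ lies in $\V(s)$, so by \eqref{variation of constant formula NBCO system} the classical solution with $x_s=0$ is exactly $\Phi_{\cdot,s}u$; inserting $x(s)=0$ in \eqref{balance inequality for NBCOs} then gives $\|\Phi_{t,s}u\|_X^2\leq c_T\int_s^t\|u(r)\|_U^2\,\d r$. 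On the other hand, \eqref{Mild solution: LocUniBd} shows $\Phi_{t,s}$ is bounded from $L^1([s,t];X_{-1})$, hence from $L^2([s,t];U)$, into $X_{-1}$; as the admissible inputs vanishing at $s$ are dense in $L^2([s,t];U)$, the $X$-estimate propagates to all $u\in L^2_{Loc}([0,\infty);U)$, which simultaneously gives $\Phi_{t,s}u\in X$ and the admissibility bound, and continuity of $(t,s)\mapsto\Phi_{t,s}u$ in $X$ follows from \cite[Proposition 3.5-(2)]{Sch02}. Thus $\{B(t)\mid t\geq 0\}$ is $L^2$-admissible and Proposition \ref{Proposition 1 Mild solution} produces the unique mild solution in $X$.

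The delicate point, I expect, is the passage from the a priori estimate --- which initially only makes sense on the smooth inputs producing genuine classical solutions --- to all of $L^2_{Loc}$: this relies on knowing in advance that $\Phi_{t,s}$ is bounded into the weaker space $X_{-1}$, so that a bound on a dense set transfers, which is exactly where \eqref{Mild solution: LocUniBd} and the strong $C^1$-regularity of $\tilde B(\cdot)$ underlying the NBCO-system are used; scattering passivity contributes only the $T$-uniform constant in \eqref{balance inequality for NBCOs}.
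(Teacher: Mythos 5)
Your argument follows exactly the paper's route: Lemma \ref{sufficient condition Wellposedness NBCOs} turns the $(R,P,J)$-scattering passivity (with $P,J$ coercive, $R$ locally bounded) into well-posedness, the Kato-class hypothesis on $\mathcal A^*$ gives the backward family and hence Proposition \ref{Extrapolation main Proposition} (the extension $U_{-1}$, the bound \eqref{Mild solution: LocUniBd} and the representation \eqref{variation of constant formula NBCO system}), the well-posedness estimate applied to classical solutions with $x_s=0$ yields the $L^2$-admissibility of $\{B(t)\mid t\geq 0\}$, and Proposition \ref{Proposition 1 Mild solution} then produces the unique mild solution in $X$. Your explicit density-and-$X_{-1}$-transfer argument for admissibility merely fills in the detail the paper leaves implicit in the remark preceding the corollary, so the proposal is correct and essentially identical to the paper's proof.
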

\medskip
Finally,  if Assumption \ref{assumption 1 } holds such that $A(t)$ generates contractive $C_0$-semigroup on $X$ for each $t\geq 0$ then we can follow   \cite[Section 2, page 8]{SchWei10} to deduce that the extrapolation spaces corresponding to $A(t)M(t)$, $t\geq 0$ can be all identified with $X_{-1}$ and that $[A(t)M(t)]_{-1}=A_{-1}(t)M(t)$  for every $t\geq 0$. 

\begin{corollary}\label{Corollary 2 Mild solution}  Assume that Assumption \ref{assumption 1 } holds such that the adjoint operators $\left\{A(t)^*\mid t\geq 0\right\}$ have a common domain. Then the perturbed system \eqref{NStateEqBCSPert2}-\eqref{NConEqBCSPert2} has a unique mild solution in $X$ if the unperturbed system $\Sigma_{N,\id}(\frA,\frB,\frC)$ is $(R,I,J)$-scattering passive with $R\in L^\infty_{Loc}([0,\infty);\L(U))$ and  $J, P$ are uniformly coercive.
\end{corollary}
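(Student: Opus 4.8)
The plan is to deduce the assertion from Corollary~\ref{Corollary 1 Mild solution}, applied to the \emph{perturbed} NBCO-system $\Sigma_{N,M}(\frA,\frB,\frC)$ with the weight $P:=M$. First recall that, under Assumption~\ref{assumption 1 }, $\Sigma_{N,M}(\frA,\frB,\frC)$ is itself a NBCO-system with main operators $\{A(t)M(t)\mid t\ge 0\}$: the generation of the associated evolution family $\mathcal W$ is provided by Theorem~\ref{Main theorem pert}, and also directly by Proposition~\ref{main proposition 2} applied with $Q=M$, once one knows (see below) that $\{M(t)A(t)\mid t\ge 0\}$ belongs to Kato's class. Consequently mild solutions of $\Sigma_{N,M}(\frA,\frB,\frC)$ in $X_{-1}$ and in $X$ make sense, and it remains to verify for $\Sigma_{N,M}(\frA,\frB,\frC)$ the three hypotheses of Corollary~\ref{Corollary 1 Mild solution}: $(R,M,J)$-scattering passivity; $R\in L^\infty_{Loc}([0,\infty);\L(U))$ together with uniform coercivity of $M$ and $J$; and membership of the adjoint family $\{(A(t)M(t))^*\mid t\ge 0\}$ of the main operators in Kato's class.

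The first two points are immediate. The passivity transfers through Lemma~\ref{Lemma passive pert}: the ``In particular'' remark following it states that $\Sigma_{N,M}(\frA,\frB,\frC)$ is $(R,M,J)$-scattering passive exactly when $\Sigma_{N,\id}(\frA,\frB,\frC)$ is $(R,I,J)$-scattering passive, which is our hypothesis. Uniform coercivity of $M$ is Assumption~\ref{assumption 1 }(1), that of $J$ is assumed, and $R\in L^\infty_{Loc}$ is assumed. Moreover, testing the characterization \eqref{characterization of passive system equation} of $(R,I,J)$-scattering passivity with $u=0$ and using $J(t)\ge 0$ shows that each main operator $A(t)$ is dissipative; since $A(t)$ generates a $C_0$-semigroup (it is the main operator of a BCO-system inside a NBCO-system), it generates a \emph{contraction} $C_0$-semigroup on $X$. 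This is precisely the situation in which, as recalled just before the statement, the extrapolation spaces of the operators $A(t)M(t)$ all coincide with $X_{-1}$ and $[A(t)M(t)]_{-1}=A_{-1}(t)M(t)$, so that the common-extrapolation-space framework of Section~\ref{Section Mild solution} is in force for $\Sigma_{N,M}(\frA,\frB,\frC)$.

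For the Kato-class assertion I would proceed as follows. Since $M(t)$ is bounded, self-adjoint and boundedly invertible, $(A(t)M(t))^*=M(t)A(t)^*$ with $D((A(t)M(t))^*)=D(A(t)^*)$. Because $A(t)$ generates a contraction $C_0$-semigroup on the Hilbert space $X$, so does its adjoint $A(t)^*$; by hypothesis the domains $D(A(t)^*)$, $t\ge 0$, all coincide with a fixed space $\frD_*$, and hence $\{A(t)^*\mid t\ge 0\}$ belongs to the elementary Kato class. Example~\ref{main Example Kato's class}, applied with $A(t)^*$ in the role of the generator and $M$ (strongly $C^2$-continuous by Assumption~\ref{assumption 1 }(2)) in the role of the coercive multiplier, then yields that $\{M(t)A(t)^*\mid t\ge 0\}=\{(A(t)M(t))^*\mid t\ge 0\}$ is Kato-stable, has the common domain $\frD_*$, and that $t\mapsto (A(t)M(t))^*\in\L(\frD_*,X)$ is strongly $C^1$-differentiable; that is, it lies in Kato's class. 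All hypotheses of Corollary~\ref{Corollary 1 Mild solution} being verified for $\Sigma_{N,M}(\frA,\frB,\frC)$ with $P=M$, that corollary produces, for every $(x_s,u)\in X\times L^2_{Loc}([0,\infty);U)$ with $B_M(\cdot)u(\cdot)\in L^1_{Loc}([0,\infty);X_{-1})$, a unique mild solution in $X$ of \eqref{NStateEqBCSPert2}--\eqref{NConEqBCSPert2}.

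The step I expect to be the real obstacle is the strong $C^1$-differentiability of $t\mapsto (A(t)M(t))^*$ into $\L(\frD_*,X)$ --- that is, checking that passing to adjoints is compatible with the assumed time-regularity. When $\frA(t)=\frA$ is time-independent this is trivial, since then $A(t)^*=A^*$ is constant and $t\mapsto M(t)A^*x$ is $C^1$ by Assumption~\ref{assumption 1 }(2); in general it must be inherited from the strong $C^1$-regularity of $\frA(\cdot)$ (cf.\ hypothesis~(a) of Theorem~\ref{Main theorem pert}) through the identity $(A(t)M(t))^*=M(t)A(t)^*$. The remaining points --- $L^2$-admissibility of the control operators $\{B_M(t)\}$, which by Lemma~\ref{sufficient condition Wellposedness NBCOs} follows from the $(R,M,J)$-scattering passivity combined with the variation-of-constants representation of Proposition~\ref{Extrapolation main Proposition}, and the reduction $B_M(t)=B(t)$ --- are routine and are, in any case, already absorbed in the proof of Corollary~\ref{Corollary 1 Mild solution}.
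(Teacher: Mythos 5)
Your proposal is correct and takes essentially the same route as the paper, whose entire proof of this corollary is precisely the reduction you carry out: apply Corollary \ref{Corollary 1 Mild solution} to the perturbed system with $P=M$, transferring the passivity through Lemma \ref{Lemma passive pert} and using the identification of the common extrapolation space ($[A(t)M(t)]_{-1}=A_{-1}(t)M(t)$) recorded just before the statement. Your extra verifications --- contractivity of the semigroups generated by $A(t)$ deduced from the passivity hypothesis, the identity $(A(t)M(t))^*=M(t)A(t)^*$ with common domain, and the Kato-class discussion for the adjoint family (including the $C^1$-regularity point, which is trivial in the intended case of time-independent $\frA$ and is left implicit by the paper as well) --- simply fill in details the paper's one-line proof omits.
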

\begin{proof} The proof is an easy consequence of  Corollary \ref{Corollary 1 Mild solution} and Lemma \ref{Lemma passive pert}.
\end{proof}
\section{Application to non-autonomous Port-Hamiltonian systems}\label{sec6}
Let $N, n\in\N$ be fixed and let $X:=L^2([a,b];\K^{n})$ where $\K=\R$ or $\C$. In this section we investigate the well-posedness  of the  linear non-autonomous \textit{port-Hamiltonian systems of order $N\in\N$}, given by the boundary control and observation system
\begin{align}
\label {port-Hamiltonian system} \frac{\partial}{\partial t}x(t,\zeta)&=\sum_{k=1}^N P_k(t) \frac{\partial^k }{\partial \zeta^k}\big[\H(t,\zeta)x(t,\zeta)\big]+P_0(t,\zeta)\H(t,\zeta)x(t,\zeta),\quad t\geq 0,\ \zeta\in(a,b)
\\ \H(0,\zeta)x(0,\zeta)&=x_0(\zeta), \qquad\qquad \zeta\in(a,b),
\\  \label{port-Hamiltonian systemBC}u(t)&=W_{B,1} \tau(\H x)(t),\qquad \qquad t\geq 0, 
\\ \label{port-Hamiltonian systemBC2}0&= W_{B,2} \tau(\H x)(t),\qquad \qquad t\geq 0,
\\ \label{Boundary Observation}y(t)&=W_C \tau(\H x)(t),\quad\quad\quad \quad  t\geq 0.
\end{align}
 Here $\tau$ denotes the  \textit{trace operator} $\tau: H^N ((a,b);\K^n)\lra \K^{2Nn}$ defined by
 \[ \tau(x):=\big( x(b),x'(b),\cdots, x^{N-1}(b),x(a),x'(a)\cdots, x^{N-1}(a)\big),\]
  $P_k(t)$ is $n\times n$ matrix for all $t\geq 0$, $k=0,1,\cdots,N$, $\H(t,\zeta)\in\K^{n\times n}$ for all $t\geq 0$ and almost every $\zeta\in[a,b]$, $W_{B,1}$ is a $m\times 2nN$-matrix, $W_{B,2}$ is $(nN-m)\times 2nN$-matrix and $W_C$ is a $d\times 2nN$-matrix. Finally, $u(t)\in U:=\K^{m}$ denotes the input and $y(t)\in Y:=\K^{d}$ is the output at time $t$.
\medskip

Set $W_B:=\left[\begin{smallmatrix}
W_{B,1}\\
W_{B,2} 
\end{smallmatrix}\right]$, $\Sigma:=
\left[\begin{smallmatrix}
	0& I\\
	I&0 
\end{smallmatrix}\right]$ and for each $t\geq 0$ we set 
\[
Q(t):=\begin{bmatrix} P_1(t)&P_2(t)&\cdots& &P_N(t) \\ 
-P_2(t)&-P_3(t)&\cdots&-P_N(t)&0\\
\vdots&&\adots&\adots&\vdots\\
 \vdots&\adots& \adots&& \vdots\\
(-1)^{N-1}P_N(t)&0&\cdots&\cdots&0&
\end{bmatrix}.\] $R_{ext}(t):=\left[\begin{smallmatrix} Q(t) &-Q(t)\\ \I& \I\end{smallmatrix}\right]$ and  $W_B(t):=W_B R_{ext}^{-1}(t), W_C(t):=W_B R_{ext}^{-1}(t)$.

In this section we  assume the following assumptions:
 \begin{assumption}\label{Assumption NpHs}{\hspace{2ex}}
\begin{itemize} 
\item $W_B$ has full rank and $W_B(t)\Sigma W_B^*(t)\geq 0$
for all $t\geq 0$.
\item $P_N(t)$ is invertible and 
$P^*_k(t)=(-1)^{k-1}P_k(t)$ for  all $k\geq 1$,  $t\geq 0$,
\item $P_k\in C^1([0,\infty);L^\infty(a,b;\C^{n\times n}))$ for all $t\geq 0$ and $k=0,1,\cdots N$.
\item  $\H\in C^2([0,\infty);L^\infty([a,b];\C^{n\times n}))$ and there exist $m, M\geq 0$ such that 
\begin{equation*}\label{coercivity H}
m \leq \H(t,\xi)=\H^*(t,\xi)\leq M, \quad {\rm a.e. }\ \xi\in [a,b], t\geq 0.
\end{equation*}
\end{itemize}
\end{assumption}
\noindent
Under these assumptions, the port-Hamiltonian system (\ref{port-Hamiltonian system})-(\ref{Boundary Observation}) can be written as a non-autonomous boundary control and observation system in the sense of Definition \ref{Def: NBCO}-$(ii)$. In fact, on the  Hilbert space $X$ we consider the (maximal) \textit{port-Hamiltonian operators} 
\begin{equation}
\frA(t) x=\sum_{k=0}^N P_k(t) \frac{\partial^k }{\partial \zeta^k}x
\quad \text{ with domain } \quad D(\frA(t))=\left \{H^N([a,b];\K^n)\mid W_{B,2}  \tau(x)=0\right\}
\end{equation}Then $(\frA(t), D(\frA(t)))$ is a closed and densely defined operator and its graph norm $\|\cdot\|_{D(\frA(t))}$ is equivalent to the Sobolev norm $\|\cdot\|_{H^N((a,b);\K^n)}$ as $P_N(t)$ is invertible. Moreover, for each $t\geq 0$ the operator $A(t): D(A(t))\subset X\lra X$ defined by 
\begin{align}
\label{NphsOpDef} A(t)x&=\frA(t) x\quad x\in D(A(t)) \\
\label{NphsOpeDomain} D(A(t))&=\Big\{x\in H^N((a,b);\K^n)\mid W_B \tau(x)=0 \Big\}
\end{align}
generates a contractive $C_0$-semigroup on $X$. Further, we define the input operator $\frB$ and output operator $\frC$ a follows 
\begin{align*} \mathfrak B: H^N((a,b);\K^n)&\lra U,\  \frB x:= W_{B,1}  \tau(x),
\end{align*}
and 
 \begin{align*} \mathfrak C: H^N((a,b);\K^n)&\lra Y,\  \frC x:= W_C  \tau(x).
 \end{align*} 
The operator $\frC$ is a linear and bounded operator from $D(\frA(t))$ to $Y$, since the trace operator $\tau$ is bounded and the norm graph norm of $D(\frA)$ is equivalent to the $H^N((a,b);\K^n)$-norm. Moreover, Lemma \ref{Lemma operator B tilde PHS} below shows that  there exists an operator $\tilde B\in \L(U,X)$ which is independent of $t\geq 0$ satisfying the assumption $(ii)$ of Definition \ref{Def:BCOS}. The proof of this fact follows by a minor modification of the proof of \cite[Theorem 11.3.2]{Jac-Zwa12} and that of \cite[Lemma 3.2.19]{Au16} (see also the second step of the proof of  \cite[Theorem 4.2]{LZM05}). 
\begin{lemma}\label{Lemma operator B tilde PHS} There exists a linear operator $\tilde B\in\L(\K^m,X)$ such that $\tilde B \K^m \subset D(\frA(t)),$  $\frA(t)\tilde B\in\L(\K^m,X)$ for each $t\geq 0$ and $\frB\tilde B=\I_{\K^m}=I_U.$
\end{lemma}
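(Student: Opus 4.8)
The goal is to construct a single bounded operator $\tilde B\colon\K^m\to X$ that is a right inverse of $\frB=W_{B,1}\tau$ on a suitable smooth subspace, with range contained in $D(\frA(t))$ for all $t\ge 0$ and with $\frA(t)\tilde B$ bounded uniformly in $t$. The natural idea is to look for $\tilde B$ in the finite-dimensional space of vector-valued polynomials: set $\tilde B v := p_v$ where $p_v(\zeta)$ is a polynomial of degree at most $2N-1$ in $\zeta$ with coefficients depending linearly on $v\in\K^m$. Such a $p_v$ automatically lies in $H^N((a,b);\K^n)$, hence in the maximal domain, and $\zeta\mapsto\frA(t)p_v(\zeta)=\sum_{k=0}^N P_k(t)p_v^{(k)}(\zeta)$ is again a polynomial of bounded degree whose $L^2$-norm is controlled by $\max_k\|P_k(t)\|_{L^\infty}$, which by Assumption~\ref{Assumption NpHs} is locally bounded in $t$; thus $\frA(t)\tilde B\in\L(\K^m,X)$ for each $t$. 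The only two genuine constraints are the algebraic ones: $W_{B,2}\tau(p_v)=0$ (so that $p_v\in D(\frA(t))$) and $W_{B,1}\tau(p_v)=v$ (so that $\frB\tilde B=\I_{\K^m}$).

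First I would encode these constraints in terms of the trace $\tau(p_v)\in\K^{2Nn}$. Since a polynomial of degree $2N-1$ has $2Nn$ scalar coefficients and $\tau$ restricted to such polynomials is a linear bijection onto $\K^{2Nn}$ (this is the Hermite interpolation fact: prescribing $p,p',\dots,p^{(N-1)}$ at the two distinct points $a$ and $b$ determines the polynomial uniquely), it suffices to find, for each $v$, a vector $w\in\K^{2Nn}$ with $W_{B,2}w=0$ and $W_{B,1}w=v$, chosen linearly in $v$; then set $p_v$ to be the unique polynomial with $\tau(p_v)=w$ and $\tilde Bv:=p_v$. The existence of such a $w$ is exactly the statement that the map $\K^{2Nn}\to\K^{nN}$, $w\mapsto W_B w=\big[\begin{smallmatrix}W_{B,1}w\\ W_{B,2}w\end{smallmatrix}\big]$, is surjective, which holds because $W_B$ has full rank $nN$ by Assumption~\ref{Assumption NpHs}; one then picks a linear right inverse $S\colon\K^{nN}\to\K^{2Nn}$ of $W_B$ and takes $w=S\binom{v}{0}$, i.e. $\tilde B v = \tau^{-1}\!\big(S\binom{v}{0}\big)$. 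Boundedness of $\tilde B$ is automatic since everything in sight is a composition of linear maps between finite-dimensional or fixed polynomial spaces, with the polynomial space carrying the $L^2((a,b);\K^n)$-norm, on which all norms on this finite-dimensional subspace are equivalent.

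The main point to check carefully — and the one genuine obstacle — is the $t$-independence: we need one $\tilde B$ that works simultaneously for all $t\ge 0$. This is where it matters that the \emph{domains} $D(\frA(t))$ and the \emph{boundary operators} $W_{B,1},W_{B,2},W_B$ are all time-independent (only the matrices $P_k(t)$ in the differential expression depend on $t$). Inspecting the construction, neither $S$, nor $\tau^{-1}$, nor the polynomial space depends on $t$, so $\tilde B$ does not either; the time dependence enters only through $\frA(t)\tilde B=\sum_{k=0}^N P_k(t)(\tilde B\,\cdot)^{(k)}$, and the estimate $\|\frA(t)\tilde B v\|_X\le C\big(\textstyle\sum_{k=0}^N\|P_k(t)\|_{L^\infty}\big)\|v\|$ shows $\frA(t)\tilde B\in\L(\K^m,X)$ for every fixed $t$ as required. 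Finally $\frB\tilde Bv=W_{B,1}\tau(p_v)=W_{B,1}S\binom{v}{0}=v$ and $W_{B,2}\tau(p_v)=W_{B,2}S\binom{v}{0}=0$, so $p_v\in D(\frA(t))$ for all $t$, completing the proof. (Alternatively, as the paper notes, one may invoke the construction in the proof of \cite[Theorem 11.3.2]{Jac-Zwa12} or \cite[Lemma 3.2.19]{Au16} verbatim, observing that the operator produced there depends only on $W_{B,1},W_{B,2}$ and the interval, not on the coefficients.)
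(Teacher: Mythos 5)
Your proposal is correct and follows essentially the same route as the paper: both exploit the full rank of $W_B$ to build a matrix right inverse $S$ sending $v\mapsto\binom{v}{0}$-compatible boundary data, and then lift that trace vector to an $H^N$ function, so that $W_{B,2}\tau(\tilde Bv)=0$ and $W_{B,1}\tau(\tilde Bv)=v$, with $\frA(t)\tilde B\in\L(\K^m,X)$ following from the $L^\infty$ bounds on the $P_k(t)$. The only (harmless) difference is that you realize the lift explicitly by two-point Hermite interpolation with vector-valued polynomials of degree at most $2N-1$, whereas the paper cites \cite[Lemma A.3]{Vi-Go-Zw-Sc05} for functions $f_j\in H^N$ with $\tau(f_j)=e_j$.
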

\begin{proof}
Since the $nN\times 2nN$-matrix $W_B$ has full rank $nN$ there exists a $2nN\times nN$-matrix $S$ such that 
\begin{equation}\label{Lemma operator B tilde proof PHS Eq1}
W_BS=\begin{bmatrix}
W_{B,1}\\
W_{B,2} 
\end{bmatrix}S=\begin{bmatrix}
I_{\K^m}& 0\\
0&0 
\end{bmatrix}.
\end{equation}
In fact, one can choose $S$ as follows
\[S=W_B^*(W_BW_B^*)^{-1}\begin{bmatrix}
I_{\K^m}& 0\\
0&0 
\end{bmatrix}.\]
Let us write $S=\left[\begin{smallmatrix} S_{11}&S_{12}\\ S_{21}&S_{22}\\ \cdot &\cdot\\ \cdot &\cdot\\ \cdot &\cdot\\ S_{(2nN)1}&S_{(2nN)2}\end{smallmatrix}\right]=:\left[\begin{smallmatrix}\tilde S_1&\tilde S_2\end{smallmatrix}\right],$ where $S_{j1}, j=1,\cdots, 2nN,$ are $1\times m$ matrices. 
\par 
Next, let $\{e_j\}_{j=1}^{2Nn}$ be the standard orthogonal basis in $\K^{2nN}.$ For each $j=1,2,\cdots 2nN$ we take $f_j\in H^{N}(a,b;\K^n)$ such that $\tau(f_j)=e_j$ \cite[Lemma A.3]{Vi-Go-Zw-Sc05}, and we define the operator $\tilde B\in\L(\K^m,X)$ by 
\begin{equation} \tilde Bu:=\sum_{j=1}^{2nN}S_{j1}uf_j\quad u\in \K^m.\end{equation}
Thus $B\in \L(\K^m, H^N((a,b);\K^n)).$ Furthermore, \eqref{Lemma operator B tilde proof PHS Eq1} implies that $W_{B,2}\tilde S_1=0$ and thus 
\begin{align*}W_{B,2}\tau(\tilde Bu)&=W_{B,2}\sum_{j=1}^{2nN}S_{j1}u\tau(f_j)\\&=W_{B,2}\sum_{j=1}^{2nN}S_{j1}u e_j=W_{B,2}\tilde S_1 u=0
\end{align*}
for every $u\in\K^m.$ We deduce that $\tilde B \K^m \subset D(\frA(t))$ for all $t\geq 0.$ It follows that $\Sigma(\frA(t),\frB,\frC)$ is for each $t\geq0$ a BCO-system on $(L^2([a,b];\K^n),\K^m,\K^d)$. Using \eqref{Lemma operator B tilde proof PHS Eq1} once more, we obtain that \[\frB\tilde B u=W_{B,1}\tau(\tilde B u)=W_{B,1}\tilde S_1u=u\] for all $u\in \K^m.$ This completes the proof.
\end{proof}
\medskip
Moreover, if in addition the following assumption holds
 \begin{assumption}\label{Assumption NpHsPassivity}{\hspace{2ex}}
\begin{itemize} 
\item $nN=m=d$ (and thus $W_B=W_{B,1}$ or equivalently $W_{B,2}=0$),
\item $R=\{R(t)\mid t\geq  0 \}$ and $J=\{J(t)\mid t\geq 0\}$ are bounded and self adjoint operators on $\K^n$,
\item $\Re P_0(t,\zeta) \leq 0$ for all $t\geq 0$ and a.e. $\zeta\in[a,b]$,
\item the matrix  $W_{C}$ has  full rank,
\end{itemize}
\end{assumption}
then we obtain: 
\begin{lemma} Under assumptions \ref{Assumption NpHsPassivity} and \ref{Assumption NpHs}
for each $t\geq 0$ the autonomous port-Hamiltonian system $\Sigma(\frA(t),\frB,\frC)$ is $(R(t),\I,J(t))$-scattering passive if 
 \begin{equation}\label{Scattering passive-CondPHS}P_{W_B,W_C}(t):=\Big( \begin{bmatrix} W_B(t)\\  W_C(t)\end{bmatrix}\Sigma  
\begin{bmatrix} W_B^*(t)& W_C^*(t)\end{bmatrix}\Big)^{-1}\leq \begin{bmatrix} 2R(t) &0\\ 0&-2J(t) \end{bmatrix}.
\end{equation}
\end{lemma}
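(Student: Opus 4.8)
The plan is to pull the assertion back, via the characterisation of scattering passive BCO-systems, to a purely finite-dimensional matrix inequality, and then to recognise that inequality as a congruence transform of the hypothesis \eqref{Scattering passive-CondPHS}.

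First I would unwind the data. Under Assumption~\ref{Assumption NpHsPassivity} one has $nN=m$, so $W_{B,2}$ is a $0\times 2nN$ matrix and the boundary constraint defining $D(\frA(t))$ is void; hence $D(\frA(t))=H^N([a,b];\K^n)$, $\frB x=W_B\tau(x)$, $\frC x=W_C\tau(x)$, and the admissible space of $\Sigma(\frA(t),\frB,\frC)$ is $\V(t)=\{(x,W_B\tau(x))\mid x\in H^N([a,b];\K^n)\}$. Since, as recalled above, $\Sigma(\frA(t),\frB,\frC)$ is a BCO-system, Lemma~\ref{Prop: Characterization of passive BCO system} applied with $P=\I$ reduces $(R(t),\I,J(t))$-scattering passivity to the inequality
\[
2\Re\big(\frA(t)x\mid x\big)_X\le \big(R(t)W_B\tau(x)\mid W_B\tau(x)\big)_U-\big(W_C\tau(x)\mid J(t)W_C\tau(x)\big)_Y
\]
for every $x\in H^N([a,b];\K^n)$, so the task becomes to derive this from \eqref{Scattering passive-CondPHS}.

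The central step is the Green identity for the port-Hamiltonian operator $\frA(t)=\sum_{k=0}^N P_k(t)\,\partial_\zeta^k$. Using repeated integration by parts together with the symmetry relations $P_k^*(t)=(-1)^{k-1}P_k(t)$ from Assumption~\ref{Assumption NpHs}, one obtains an identity of the form
\[
2\Re\big(\frA(t)x\mid x\big)_X=\tfrac12\big(R_{ext}(t)\tau(x)\big)^{*}\Sigma\big(R_{ext}(t)\tau(x)\big)+2\Re\big(P_0(t,\cdot)x\mid x\big)_X;
\]
this is precisely the computation that the matrices $Q(t)$ and $R_{ext}(t)$ are built to encode (compare \cite{LZM05} for $N=1$ and its higher-order analogues, e.g.\ \cite{Au16}). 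Since $\Re P_0(t,\zeta)\le 0$ for a.e.\ $\zeta$ by Assumption~\ref{Assumption NpHsPassivity}, the last term is $\le 0$, so it suffices to prove
\[
\tfrac12\big(R_{ext}(t)\tau(x)\big)^{*}\Sigma\big(R_{ext}(t)\tau(x)\big)\le \big(R(t)W_B\tau(x)\mid W_B\tau(x)\big)_U-\big(W_C\tau(x)\mid J(t)W_C\tau(x)\big)_Y
\]
for all $x\in H^N([a,b];\K^n)$.

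Finally I would linearise. The trace $\tau\colon H^N([a,b];\K^n)\to\K^{2nN}$ is surjective (e.g.\ \cite[Lemma A.3]{Vi-Go-Zw-Sc05}), and $R_{ext}(t)$ is invertible because $Q(t)$ is (it is block anti-triangular with anti-diagonal blocks $\pm P_N(t)$, and $P_N(t)$ is invertible by Assumption~\ref{Assumption NpHs}); hence $z:=R_{ext}(t)\tau(x)$ runs over all of $\K^{2nN}$ as $x$ runs over $H^N([a,b];\K^n)$, and $W_B\tau(x)=W_B(t)z$, $W_C\tau(x)=W_C(t)z$ by the definitions of $W_B(t)$ and $W_C(t)$. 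Writing $\Phi(t):=\left[\begin{smallmatrix}W_B(t)\\ W_C(t)\end{smallmatrix}\right]$, the inequality above is equivalent to $\tfrac12\,z^{*}\Sigma z\le z^{*}\Phi(t)^{*}\left[\begin{smallmatrix}R(t)&0\\0&-J(t)\end{smallmatrix}\right]\Phi(t)\,z$ for all $z\in\K^{2nN}$, i.e.\ to $\Sigma\le\Phi(t)^{*}\left[\begin{smallmatrix}2R(t)&0\\0&-2J(t)\end{smallmatrix}\right]\Phi(t)$. As $P_{W_B,W_C}(t)=(\Phi(t)\Sigma\Phi(t)^{*})^{-1}$ is assumed to exist, $\Phi(t)$ is invertible, so the congruence $Z\mapsto\Phi(t)^{-*}Z\Phi(t)^{-1}$ turns the previous line into $(\Phi(t)\Sigma\Phi(t)^{*})^{-1}\le\left[\begin{smallmatrix}2R(t)&0\\0&-2J(t)\end{smallmatrix}\right]$ (using $\Sigma^{-1}=\Sigma$), that is, exactly $P_{W_B,W_C}(t)\le\left[\begin{smallmatrix}2R(t)&0\\0&-2J(t)\end{smallmatrix}\right]$, which is \eqref{Scattering passive-CondPHS}. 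The genuinely delicate point is the higher-order Green identity of the third paragraph: for $N=1$ it is the classical port-Hamiltonian balance law, while for $N\ge2$ one must carefully track which boundary terms cancel under $P_k^{*}=(-1)^{k-1}P_k$, and this is also where the factor $\tfrac12$ arises, from $R_{ext}(t)$ being normalised without the customary $2^{-1/2}$.
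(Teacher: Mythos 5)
Your proof is correct and follows essentially the same route as the paper's: the higher-order Green identity expressed through $R_{ext}(t)$ (cited rather than reproved, exactly as the paper cites \cite[Lemma 3.2.13]{Au16}), the change of boundary coordinates to $W_B(t),W_C(t)$, the sign condition $\Re P_0\leq 0$, and the characterization of scattering passivity from Lemma \ref{Prop: Characterization of passive BCO system}. You merely make explicit two points the paper leaves implicit, namely the final congruence step and the invertibility of $\left[\begin{smallmatrix} W_B(t)\\ W_C(t)\end{smallmatrix}\right]$ (guaranteed by the existence of $P_{W_B,W_C}(t)$), and your factor $\tfrac12$ in the Green identity is indeed the right normalization for the paper's un-normalized $R_{ext}(t)$, consistent with the constant $2$ in \eqref{Scattering passive-CondPHS}.
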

\begin{proof}
Using \cite[Lemma 3.2.13]{Au16} we obtain 
\begin{equation}
\label{Scattering passive Eq0}\Re (\frA(t)x\mid x)=\Re (R_{ext}(t)\tau(x)\mid \Sigma R_{ext}(t)\tau(x))+\Re (P_0(t)x\mid x).
\end{equation}
Inserting  
\[\begin{bmatrix}W_B\tau (x)\\W_C\tau(x)\end{bmatrix}=\begin{bmatrix}W_B\\W_C\end{bmatrix}R_{ext}^{-1}(t)R_{ext}(t)\tau(x)=\begin{bmatrix}W_B(t)\\W_C(t)\end{bmatrix}R_{ext}(t)\tau(x)\]
into \eqref{Scattering passive Eq0} we obtain that
\begin{align}
2\Re(\frA(t)x\mid x)\leq &\Big<\begin{bmatrix}W_B  \tau(x)\\ W_C  \tau(x)\end{bmatrix}\Big|\begin{bmatrix} W_B(t)^*& W_C(t)^*\end{bmatrix}^{-1}\Sigma \begin{bmatrix} W_B(t)\\  W_C(t)\end{bmatrix}^{-1}\begin{bmatrix}W_B  \tau(x)\\ W_C  \tau(x)\end{bmatrix}\Big >_{\K^{2nN}}\quad t\geq 0,
\end{align}
holds for every $x\in H^{N}([a,b];\K^n)$, since $Re P_0(t,\zeta)\leq0$. Now the claim follows by Lemma \ref{Prop: Characterization of passive BCO system}.
\end{proof}

\par\noindent Finally, the assumption on $\H$ ensures that the family of operators $M(t) := \H(t)(\cdot):=\H(t,\cdot)$ as matrix multiplication operators on $L^2(a,b;\K^n)$ satisfies all assumptions of Section \ref{Section perturbation of NBCOS}. 
\medskip

Our abstract results in the previous sections hence yield the following  main result.
\medskip
\begin{theorem}\label{main result Phs1}If Assumption \ref{Assumption NpHs} holds, then the port-Hamiltonian system \eqref{port-Hamiltonian system}-\eqref{Boundary Observation} is a non-autonomous boundary control and observation system. Furthermore, there exists a unique evolution family $\W$ in $L^2([a,b];\K^n)$ such that for each  $x_0\in H^N((a,b);\K^n)$ and $u\in C^2([a,b];\K^m)$ with, $W_{B,1} \tau(x_0)=u(0)$ and $W_{B,2} \tau({x_0})=0$ we have  \begin{align*}\label{VariContFormulaPHS}
x(t)=W(t,0){\H^{-1}(0,\zeta)}x_0+\int_0^t W(t,r)\frA(r)\tilde Bu(r)\d r+&\int_0^t W(t,r)\H^{-1}(r)\dot\H(r)\H^{-1}(r)\tilde Bu(r)\d r \\&
\qquad\qquad-\int_0^t W(t,r)\H^{-1}(r)\tilde B\dot{u}(r)\d r,\quad t\geq 0,
\end{align*}
\begin{align*}
y(t)=\frC \H(t)W(t,0){\H^{-1}(0,\zeta)}x_0+&\frC \int_0^t \H(t)W(t,r)\Big[\frA(r)-\H^{-1}(r)\dot\H(r)\H^{-1}(r)\Big]\tilde Bu(r)\d r
\\&-\frC\int_0^t \H(r)W(t,r)\H^{-1}(r)\tilde B\dot{u}(r)\d r, \quad t\geq 0.
\end{align*}
is the unique classical solution of \eqref{port-Hamiltonian system}-\eqref{Boundary Observation}. If in addition Assumption \ref{Assumption NpHsPassivity} and \eqref{Scattering passive-CondPHS} hold, then  \eqref{port-Hamiltonian system}-\eqref{Boundary Observation} is $(R,\H,J)$-scattering passive and the classical solution $(x,y)$ satisfies the balance inequality 
\begin{equation}\label{balance inequality for NPHs}m\|x(t)\|^2+\int_s^t(y(r)\mid J(r)y(r))\d r \leq c_{t,s} e^{\frac{1}{m}\int_s^t \|\dot \H(r)\|\d r}\Big[\int_s^t(u(r)\mid R(r)u(r))\d r+\|x(s)\|^2\Big]\end{equation}
where $c_{t,s}=\max\{1,\underset{r\in [s,t]}{\max }\| \H(r)\|\}$. Moreover,  \eqref{port-Hamiltonian system}-\eqref{Boundary Observation} is well posed if in addition $J$ is uniformly coercive and $R\in L_{Loc}^\infty([0,\infty);\L(\K^n))$.
\end{theorem}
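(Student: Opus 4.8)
The plan is to verify that the data of the port-Hamiltonian system \eqref{port-Hamiltonian system}--\eqref{Boundary Observation} fit exactly into the framework of Theorem \ref{Main theorem pert}, and then simply read off the conclusions. First I would identify the abstract ingredients: set $X:=L^2([a,b];\K^n)$, $U:=\K^m$, $Y:=\K^d$, let $\frA(t)$, $\frB$, $\frC$ be the (time-independent in the boundary part) operators introduced above, and put $M(t):=\H(t,\cdot)$ as a matrix-multiplication operator. By Assumption \ref{Assumption NpHs} the function $\H$ is in $C^2([0,\infty);L^\infty([a,b];\C^{n\times n}))$ and is uniformly coercive with $m\le \H(t,\xi)=\H^*(t,\xi)\le M$; hence $M(\cdot)$ is self-adjoint, uniformly coercive, strongly $C^2$, and $M^{-1}(\cdot)$ is strongly continuous, so Assumption \ref{assumption 1 } holds (with $L\equiv I$). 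The next step is to check the three hypotheses (a)--(c) of Theorem \ref{Main theorem pert}: (a) $\frA:[0,\infty)\to\L(\frD,X)$ is strongly $C^1$ because $\frA(t)x=\sum_{k=0}^N P_k(t)\partial_\zeta^k x$ depends on $t$ only through the matrices $P_k(t)$, which are $C^1$ by Assumption \ref{Assumption NpHs}, and the graph norm of $\frD=D(\frA(t))$ is equivalent to the $H^N$-norm uniformly (as $P_N(t)$ is invertible); (b) the main operators $A(t)$ defined by \eqref{NphsOpDef}--\eqref{NphsOpeDomain} generate contractive $C_0$-semigroups by the autonomous port-Hamiltonian theory (\cite[Theorem 4.1]{LZM05}), using $W_B(t)\Sigma W_B^*(t)\ge 0$ and $P_k^*(t)=(-1)^{k-1}P_k(t)$; (c) $\tilde B$ is time-independent by Lemma \ref{Lemma operator B tilde PHS}, hence trivially $\tilde B(\cdot)u\in C^2$. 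Therefore Theorem \ref{Main theorem pert} applies and yields that $\Sigma_{N,M}(\frA,\frB,\frC)$ is an NBCO-system with a unique evolution family $\W$.

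For the explicit solution formulas, I would specialize the formula of Theorem \ref{Main theorem pert} to the present situation: there $x(t)=W(t,s)x_s+\int_s^t W(t,r)\frA(r)\tilde B(r)u(r)\,\d r-\int_s^t W(t,r)\frac{\d}{\d r}[\tilde B_M(r)u(r)]\,\d r$ with $\tilde B_M(r)=M^{-1}(r)\tilde B=\H^{-1}(r)\tilde B$ and $\tilde B$ constant. Expanding $\frac{\d}{\d r}[\H^{-1}(r)\tilde B u(r)]=\dot{(\H^{-1})}(r)\tilde B u(r)+\H^{-1}(r)\tilde B\dot u(r)$ and using $\dot{(\H^{-1})}(r)=-\H^{-1}(r)\dot\H(r)\H^{-1}(r)$ gives the stated formula for $x(t)$ (note $x_s=\H^{-1}(0,\zeta)x_0$ at $s=0$ since the initial condition is $\H(0,\zeta)x(0,\zeta)=x_0$); applying $\frC_M(t)=\frC\H(t)$ gives the formula for $y(t)$. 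Uniqueness of the classical solution is part of the conclusion of Proposition \ref{Proposition Fattorini Trick Non-A}, invoked inside Theorem \ref{Main theorem pert}.

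For the passivity and the balance inequality, under the extra Assumption \ref{Assumption NpHsPassivity} and \eqref{Scattering passive-CondPHS}, the preceding Lemma shows that for each $t\ge 0$ the autonomous system $\Sigma(\frA(t),\frB,\frC)$ is $(R(t),\I,J(t))$-scattering passive, i.e. the matrix inequality \eqref{Eq Characterization of passive BCO systemb} holds with $P=\I$; by Proposition \ref{Propo: Characterisation of passive NBCOsystems} the unperturbed non-autonomous system $\Sigma_{N,\id}(\frA,\frB,\frC)$ is $(R,\I,J)$-scattering passive, and by Lemma \ref{Lemma passive pert} the perturbed system $\Sigma_{N,\H}(\frA,\frB,\frC)$ is $(R,\H,J)$-scattering passive. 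Since $\H$ is strongly $C^1$ and uniformly coercive with constant $m$ (so $(\H(t)x\mid x)\ge m\|x\|^2$), and $J\ge 0$, Lemma \ref{sufficient condition Wellposedness NBCOs} delivers exactly the balance inequality \eqref{balance inequality for NPHs} with $\beta=m$ and $c_{t,s}=\max\{1,\max_{r\in[s,t]}\|\H(r)\|\}$; the final well-posedness claim is the last sentence of Lemma \ref{sufficient condition Wellposedness NBCOs}, invoked with $J$ uniformly coercive and $R\in L^\infty_{Loc}$. The main obstacle is the bookkeeping in step one — checking that the equivalence of graph norms of $D(\frA(t))$ with the $H^N$-norm is uniform on compact $t$-intervals, so that strong $C^1$-differentiability of $t\mapsto\frA(t)\in\L(\frD,X)$ genuinely holds and the constants in Assumption \ref{assumption 1 } and hypotheses (a)--(c) are locally uniform; once that is in place, everything else is a direct citation of the abstract theorems.
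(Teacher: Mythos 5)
Your proposal is correct and follows essentially the same route as the paper: the paper likewise verifies Assumption \ref{assumption 1 } for $M(t)=\H(t,\cdot)$, uses Lemma \ref{Lemma operator B tilde PHS} for the time-independent $\tilde B$, checks contractivity of the main operators $A(t)$ and the passivity condition \eqref{Scattering passive-CondPHS} via the preceding lemma, and then reads the conclusions off Theorem \ref{Main theorem pert}, Lemma \ref{Lemma passive pert}, Proposition \ref{Propo: Characterisation of passive NBCOsystems} and Lemma \ref{sufficient condition Wellposedness NBCOs}, exactly as you do (including the expansion of $\frac{\d}{\d r}[\H^{-1}(r)\tilde Bu(r)]$ that produces the stated formulas). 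Your closing remark about local uniformity of the graph-norm equivalence is a reasonable point of care but does not mark a departure from the paper's argument.
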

\par\noindent 

Finally, we  give a result on the existence of mild solution of the non-autonomous port-Hamiltonian system. For that we assume that  $nN=m=d$. Then it is known \cite[Lemma A1]{LZM05} (see also \cite[Section~7.3]{Jac-Zwa12}) that there exist  a matrix $V\in \K^{nN\times nN}$ and an invertible matrix $S\in \K^{nN\times nN}$ such that 
\[W_B=S\begin{bmatrix} I+V& I-V\end{bmatrix}\]
with $VV^*\geq \I$. Further, we have $\ker W_B=\ran 
\left[\begin{smallmatrix} I-V\\ -I-V\end{smallmatrix}\right]$ . For each $t\geq 0$, the adjoint operator $A^*(t):D(A^*(t))\lra X$ of \eqref{NphsOpDef}-\eqref{NphsOpeDomain} is given  by 
\begin{align}
\label{NphsOpDefad} A^*(t)x&=-\frA(t) x\quad x\in D(A^*(t)) \\
\label{NphsOpeDomainad} D(A^*(t))&=\Big\{x\in H^N(a,b;\K^n)\mid  \begin{bmatrix} I-V^*&-I-V^*\end{bmatrix}\begin{bmatrix}Q(t)&0\\0&-Q(t)\end{bmatrix}\tau(x)=0 \Big\}
\end{align}
see e.g., \cite[Theorem 2.24]{Vi07}, \cite[Proposition 3.4.3]{Au16}. We deduce that the domain of $A^*(t)$ are time-independent if for instance all matrices $P_k, k=1,2,\cdots N$ are constant. Thus using Corollary \ref{Corollary 2 Mild solution} we obtain the followin proposition.
\begin{proposition}\label{main result Phs2} Assume that Assumption \ref{Assumption NpHs} and Assumption \ref{Assumption NpHsPassivity} hold with $P_k, k=1,2,\cdots N$ are constant and $J$ is uniformly coercive and $R\in L_{Loc}^\infty([0,\infty);\L(\K^n))$. If  \eqref{Scattering passive-CondPHS} holds, then the non-autonomous  system \eqref{port-Hamiltonian system}-\eqref{port-Hamiltonian systemBC2} has a unique mild solution. 
\end{proposition}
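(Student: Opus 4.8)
The plan is to obtain Proposition~\ref{main result Phs2} as a direct application of Corollary~\ref{Corollary 2 Mild solution}, so that the proof reduces to checking that, under the stated hypotheses, the non-autonomous port-Hamiltonian system fits the setting of that corollary.

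First I would reduce to the abstract framework of Section~\ref{Section perturbation of NBCOS}. Since $P_1,\dots,P_N$ are constant, the maximal port-Hamiltonian operator $\frA(t)$ differs from a fixed ($t$-independent) operator only by the bounded multiplication term $P_0(t,\cdot)$, while the boundary operators $\frB=W_{B,1}\tau$ and $\frC=W_C\tau$ are already time-independent; hence, with $M(t):=\H(t,\cdot)$, the system \eqref{port-Hamiltonian system}-\eqref{port-Hamiltonian systemBC2} is precisely the perturbed system $\Sigma_{N,M}(\frA,\frB,\frC)$ studied in Section~\ref{Section perturbation of NBCOS}, the operator $\tilde B$ being the one provided by Lemma~\ref{Lemma operator B tilde PHS}. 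By Assumption~\ref{Assumption NpHs} the function $M=\H$ is self-adjoint and uniformly coercive with $M(\cdot)x\in C^2([0,\infty);X)$ and $M^{-1}(\cdot)x\in C([0,\infty);X)$ for every $x\in X=L^2([a,b];\K^n)$, so Assumption~\ref{assumption 1 } holds (with $L=R=I$ there); moreover, as noted after \eqref{NphsOpeDomain}, the main operators $A(t)$ generate contraction $C_0$-semigroups on $X$, so that $\Sigma_{N,M}(\frA,\frB,\frC)$ is a NBCO-system by Theorem~\ref{main result Phs1}.

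Next I would verify the remaining two hypotheses of Corollary~\ref{Corollary 2 Mild solution}. For the adjoint main operators, the explicit description \eqref{NphsOpeDomainad} shows that $D(A^*(t))$ depends on $t$ only through the matrix $Q(t)$, which is assembled from $P_1(t),\dots,P_N(t)$ and is therefore constant; hence $D(A^*(t))=D(A^*(0))$ for all $t\ge0$, i.e.\ $\{A^*(t)\mid t\ge0\}$ has a common domain. (On this common domain $A^*(\cdot)$ is a locally bounded, strongly $C^1$ perturbation of a fixed operator that generates a contraction semigroup, because $P_0\in C^1([0,\infty);L^\infty(a,b;\C^{n\times n}))$, so $\{A^*(t)\mid t\ge0\}$ even belongs to the Kato class, which is the form in which this condition enters through the proof of Corollary~\ref{Corollary 2 Mild solution}.) For the passivity, the lemma preceding Theorem~\ref{main result Phs1} shows that Assumptions~\ref{Assumption NpHs} and \ref{Assumption NpHsPassivity} together with \eqref{Scattering passive-CondPHS} make $\Sigma(\frA(t),\frB,\frC)$ $(R(t),\I,J(t))$-scattering passive for every $t\ge0$; by the equivalence of $(i)$ and $(iii)$ in Proposition~\ref{Propo: Characterisation of passive NBCOsystems}, the unperturbed system $\Sigma_{N,\id}(\frA,\frB,\frC)$ is then $(R,I,J)$-scattering passive, and here $P=I$ is uniformly coercive while $J$ is uniformly coercive and $R\in L^\infty_{Loc}([0,\infty);\L(\K^n))$ by hypothesis. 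All assumptions of Corollary~\ref{Corollary 2 Mild solution} being met, we conclude that \eqref{port-Hamiltonian system}-\eqref{port-Hamiltonian systemBC2} has a unique mild solution in $X$.

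The only step with real content is the common-domain property of the adjoint main operators: this is exactly where the assumption that $P_1,\dots,P_N$ (and not merely $\frA$ up to a bounded term) are constant is genuinely used, via \eqref{NphsOpeDomainad}. Once this is in place, the identification of all the extrapolation spaces with a single $X_{-1}$, the existence of the bounded extensions $U_{-1}(t,s)$, and the $L^2$-admissibility of the control operators follow from the remarks preceding Corollary~\ref{Corollary 2 Mild solution} and from \cite{SchWei10}, and the remainder is bookkeeping.
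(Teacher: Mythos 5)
Your proposal is correct and follows essentially the same route as the paper: the paper's own argument consists precisely of reading off from \eqref{NphsOpDefad}--\eqref{NphsOpeDomainad} that $D(A^*(t))$ is time-independent because $Q(t)$ is built from the constant matrices $P_k$, and then invoking Corollary \ref{Corollary 2 Mild solution}, with the $(R,I,J)$-scattering passivity of the unperturbed system supplied by \eqref{Scattering passive-CondPHS} via the lemma preceding Theorem \ref{main result Phs1}. Your additional verifications (Assumption \ref{assumption 1 } for $M=\H$, the Kato-class property of the adjoint family, Proposition \ref{Propo: Characterisation of passive NBCOsystems}) simply make explicit what the paper leaves implicit in applying that corollary.
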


We closed this section by  some examples of physical systems which can be modelled as a non-autonomous port-Hamiltonian system. Then the existence of classical and mild solutions as well as well-posedness can be checked by a simple application of the  abstracts results presented in this section. Here we will present just two relevant examples, however various other control systems fit into the framework of port-Hamiltonian system and  into the general class of NBCO-systems. 
\subsection{Vibrating string}\label{Ex vibrating string} Let us consider the model of vibrating string on the compact interval $[a,b]$. The string is fixed at the left end point $a$ and at the right end point $b$ a damper is attached. The Young's modulus and the mass density of the string are assumed to be time- and spatial dependent. Let us denote by $\omega(t,\zeta)$ the vertical position of the string at position $\zeta\in[a,b]$ and time $t\geq 0$. Then the evolution of the controlled vibrating string can be modelled by a non-autonomous wave equation of the form

\begin{align}
\label{Eq1: State eq}\frac{\partial  }{\partial t}\Big(\alpha(t)\rho(t,\zeta)\frac{\partial w }{\partial t}(t,\zeta) \Big)&=\frac{1}{\alpha(t)}\frac{\partial}{\partial \zeta}\Big(T(t,\zeta)\frac{\partial w}{\partial \zeta}(t,\zeta)\Big), \ \zeta\in[a,b],\ t\geq 0,\\
\label{Eq2: BCd}T(b,t)\frac{\partial w}{\partial \zeta}(t,b)+k\alpha(t)\frac{\partial w}{\partial t}(t,b)&=u_1(t),
\\\label{Eq1: BCd}\frac{\partial w}{\partial t}(t,a)&=u_2(t).
\end{align} 
We assume that $k\geq 0$ and $T, \rho \in C^2([0,\infty);L^\infty(a,b)) \cap C_b([0,\infty);L^\infty(a,b))$ such that  for some $m>0$, for a.e\  $\zeta\in[a,b]$ and all $t\geq 0$ we have $m^{-1} \leq \rho(t,\zeta), T(t,\zeta)\leq m$, moreover,  $\alpha \in C^1([0,\infty))$  is strictly positive. We take as state variable the momentum-strain couple $x:=(\alpha\rho \frac{\partial w}{\partial t}, \frac{\partial w}{\partial \zeta})$. 
Then the first equation can be equivalently written as follows
\begin{equation}
\frac{\partial}{\partial t}x(t,\zeta)=\frA(t) \H(t,\zeta)x(t,\zeta)
\end{equation}
where \[\frA(t):=\begin{bmatrix} 
0&1/\alpha(t)\\
 1/\alpha(t)&0\end{bmatrix}\frac{\partial}{\partial \zeta}\ \ \text{ and }\H(t,\zeta):=\begin{bmatrix} 
 \frac{1}{\rho(t,\zeta)}&0\\
 0&T(t,\zeta)\end{bmatrix}. \]
 Indeed, we have
\begin{align*}
\frA(t) \H(t,\zeta)x(t,\zeta)&=\begin{bmatrix} 0 &
 1/\alpha(t)\\
 1/\alpha(t)&0\end{bmatrix}\frac{\partial}{\partial \zeta}\begin{bmatrix} 
 \frac{1}{\rho(t,\zeta)}&0\\
 0&T(t,\zeta)\end{bmatrix}\begin{bmatrix}\alpha(t)\rho(t,\zeta) \frac{\partial w}{\partial t}(t,\zeta)\\ \frac{\partial w}{\partial \zeta}(t,\zeta)\end{bmatrix}
\\&=\begin{bmatrix} 0 &
 1/\alpha(t)\\
 1/\alpha(t)&0\end{bmatrix}\frac{\partial}{\partial \zeta}\begin{bmatrix}\alpha(t) \frac{\partial w}{\partial t}(t,\zeta)\\ T(t,\zeta)\frac{\partial w}{\partial \zeta}(t,\zeta)\end{bmatrix}
\\&=\begin{bmatrix}1/\alpha(t)\frac{\partial}{\partial \zeta}\big(T(t,\zeta)\frac{\partial w}{\partial \zeta}(t,\zeta)\big)\\ \frac{\partial}{\partial \zeta} \frac{\partial w}{\partial t}(t,\zeta) \end{bmatrix}
\\&=\begin{bmatrix}\frac{\partial  }{\partial t}\Big(\alpha(t)\rho(t,\zeta)\frac{\partial w }{\partial t}(t,\zeta) \Big)\\ \frac{\partial}{\partial t} \frac{\partial w}{\partial \zeta}(t,\zeta) \end{bmatrix}=\frac{\partial}{\partial t}x(t,\zeta).
\\
\end{align*}
Moreover, the boundary conditions \eqref{Eq1: BCd}-\eqref{Eq2: BCd} with $u=(u_1,u_2)=0$ can be equivalently written as follows 
\[W_B\begin{bmatrix}\H(t,b)x(t,b)\\\H(t,a)x(t,a)\end{bmatrix}:=
 \begin{bmatrix} k&1&0&0\\
 	0&0&1&0\end{bmatrix}\begin{bmatrix}\H(t,b)x(t,b)\\\H(t,a)x(t,a)\end{bmatrix}=\begin{bmatrix}0\\ 0\end{bmatrix}\]
The $2\times 4$ matrix $W_B$ has full rank. Next,   \[W_B(t)=W_B \begin{bmatrix}
0&\alpha(t)& 1&0\\
\alpha(t)&0& 0&1\\
0&-\alpha(t)& 1&0\\
-\alpha(t)&0& 0&1\\
\end{bmatrix}=
\begin{bmatrix} \alpha(t)&k\alpha(t)&k&1\\
0&-\alpha(t)&1&0\end{bmatrix}\]
and  $W_B(t)\Sigma W_B^*(t)=\begin{bmatrix}
4k\alpha(t)& 0\\
0&0
\end{bmatrix}\geq 0$.
The corresponding matrices $W_{B,1}, W_{B,2}$ and the  corresponding boundary operator $\frB$ can be defined as follows:
 
\textbf{Case $u_2=0:$} $W_{B,2}= \begin{bmatrix}
 	0&0&1&0\end{bmatrix}$ and 
\begin{align*} \mathfrak B: &H^N((a,b);\K^2)\lra U=\K,\  \\\frB x:&=W_{B,1}\tau(x):= \begin{bmatrix} k&1&0&0
 \end{bmatrix}\begin{bmatrix}x(b)\\x(a)\end{bmatrix}.
\end{align*}

\textbf{Case $u_2\neq 0:$} $W_{B,2}= 0$ and
\begin{align*} \mathfrak B: H^N((a,b);\K^2)&\lra U=\K^2,\  \frB x:= \begin{bmatrix} k&1&0&0\\
 	0&0&1&0\end{bmatrix}\begin{bmatrix}x(b)\\ x(a)\end{bmatrix}
\end{align*}
 For each $S,V\in \K^{2\times 2}$ such that  $S$ is invertible and $VV^*\geq I$ we can  we take 
\begin{equation}\label{output Vibrating}y(t)= S\begin{bmatrix} \I+ V& \I- V\end{bmatrix}\begin{bmatrix}\H(t,b)x(t,b)\\\H(t,a)x(t,a)\end{bmatrix}\end{equation}
as an output of \eqref{Eq1: BCd}-\eqref{Eq2: BCd}. Thus, we  are in the position to apply Theorem \ref{main result Phs1}. However, Proposition \ref{main result Phs2} concerning mild solutions can be applied only if $\alpha(t)\equiv\alpha>0$ is constant.
\begin{proposition}\label{solvability main results vibrating string} Under the conditions on the physical parameters $T, \alpha, \rho, k$ listed above we have:
	
\begin{enumerate}
	\item The abstract linear system associated with the controlled vibrating string \eqref{Eq1: State eq}, \eqref{Eq1: BCd} with output \eqref{output Vibrating} yields a non-autonomous boundary control and observation system on $(L^2([a,b];\K^2),\K^1,\K^2)$ if $u_2=0$, i.e., when the string is clamped at the end point $a,$ and in $(L^2([a,b];\K^2),\K^2,\K^2)$ if $u_2\neq 0.$ 
	\item  Let $\omega_0, \omega_1 \in H^1(a,b;\K)$ be such that $k\omega_0(b)+\omega_1(b)=u_1(0)$ and $\omega_0(a)=u_2(0)$.
	Then  \eqref{Eq1: State eq}-\eqref{Eq1: BCd} with output equation \eqref{output Vibrating} and initial conditions
	\[
\alpha(0)\rho(0,\cdot) \frac{\partial w}{\partial t}(0,\cdot) = \omega_0,
	\quad
	\frac{\partial \omega}{\partial t}(0,\cdot) = \omega_1
	\]
	has a unique solution $(\omega,y)$ such that $y\in C([0,\infty);\K^2)$ and \[t\longmapsto \begin{bmatrix}\alpha(t)\frac{\partial w}{\partial t}\\
	T(t,\cdot) \frac{\partial w(t,\cdot)}{\partial \zeta}\end{bmatrix}\in C^1\big((0,\infty); L^2(a,b;\K^2)\big)\cap C\big([0,\infty); L^2(a,b;\K^2)\big). \]
	\item Let $u_2\neq 0.$  Let $R(t), J(t)$ be self adjoint $2\times 2$-matrices such that $R\in L_{Loc}^\infty([0,\infty);\L(\K^2))$ and $c_0^{-1}\leq J(t)\leq c_0$ for all $t\geq 0$ and some constant $c_0>0.$ Choose $V, S$ in \eqref{output Vibrating} such that \eqref{Scattering passive-CondPHS} holds for all $t\geq 0.$ Then the linear system associated with the non-autonomous controlled vibrating string \eqref{Eq1: State eq}-\eqref{Eq1: BCd} and \eqref{output Vibrating} is a well-posed non-autonomous boundary control and observation system.
	\item Assume that $\alpha(t)\equiv\alpha>0$ is constant such that the assumptions in $(3)$ hold. Let $\omega_0, \omega_1 \in L^2(a,b;\C).$ Then \eqref{Eq1: State eq}-\eqref{Eq1: BCd} with  initial conditions
	\[
	\alpha(0)\rho(0,\cdot) \frac{\partial w}{\partial t}(0,\cdot) = \omega_0,
	\quad
	\frac{\partial \omega}{\partial t}(0,\cdot) = \omega_1
	\]
	has a unique (mild) solution $\omega$ such that  \[t\longmapsto \begin{bmatrix}\alpha(t)\frac{\partial w}{\partial t}\\
	T(t,\cdot) \frac{\partial w(t,\cdot)}{\partial \zeta}\end{bmatrix}\in  C\big([0,\infty); L^2(a,b;\K^2)\big). \]
\end{enumerate}	

	\end{proposition}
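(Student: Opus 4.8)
The plan is to recast the controlled vibrating string \eqref{Eq1: State eq}--\eqref{Eq1: BCd} with output \eqref{output Vibrating} as a non-autonomous port-Hamiltonian system of order $N=1$ on $X=L^2([a,b];\K^2)$, so that the four assertions become direct applications of Theorem~\ref{main result Phs1} and Proposition~\ref{main result Phs2}. First I would record the data of the abstract model: $\frA(t)=\left[\begin{smallmatrix}0&1/\alpha(t)\\ 1/\alpha(t)&0\end{smallmatrix}\right]\partial_\zeta$, i.e.\ $P_1(t)=\left[\begin{smallmatrix}0&1/\alpha(t)\\ 1/\alpha(t)&0\end{smallmatrix}\right]$ and $P_0\equiv 0$; $M(t)=\H(t,\cdot)=\operatorname{diag}\!\big(1/\rho(t,\cdot),T(t,\cdot)\big)$; and the matrices $W_B,W_{B,1},W_{B,2},W_C$ exhibited in the text, with the two admissible choices of $W_{B,1},W_{B,2}$ according to whether $u_2=0$ or $u_2\ne 0$, so that the output \eqref{output Vibrating} is of the form $\frC(\H x)$ with $W_C=S\left[\begin{smallmatrix}I+V&I-V\end{smallmatrix}\right]$. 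Then one checks Assumption~\ref{Assumption NpHs}: $P_1(t)$ is invertible with $P_1(t)^*=P_1(t)$ and $P_0^*=P_0$ (as $\Re P_0=0$); the regularities $P_k\in C^1([0,\infty);L^\infty)$ and $\H\in C^2([0,\infty);L^\infty)$ together with $m^{-1}I\le\H(t,\cdot)=\H(t,\cdot)^*\le mI$ are exactly the stated hypotheses on $T,\rho,\alpha$; and $W_B$ has full rank while $W_B(t)\Sigma W_B(t)^*=\left[\begin{smallmatrix}4k\alpha(t)&0\\ 0&0\end{smallmatrix}\right]\ge 0$, as already computed. Since the assumption on $\H$ guarantees that $M(t)=\H(t,\cdot)$ satisfies all requirements of Section~\ref{Section perturbation of NBCOS}, the first assertion of Theorem~\ref{main result Phs1} then yields Part~$(1)$, the input/output spaces being $\K^1\times\K^2$ when $u_2=0$ (one row in $W_{B,1}$) and $\K^2\times\K^2$ when $u_2\ne 0$.

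For Part~$(2)$, with $N=1$ the data $\omega_0,\omega_1\in H^1(a,b;\K)$ give an initial datum $x_0\in H^1(a,b;\K^2)=H^N(a,b;\K^2)$ for the abstract system, after passing to the energy coordinates $\H(0,\cdot)x_0$ dictated by \eqref{port-Hamiltonian system}. A short computation identifies the compatibility conditions $k\omega_0(b)+\omega_1(b)=u_1(0)$ and $\omega_0(a)=u_2(0)$ with $W_{B,1}\tau(\H(0,\cdot)x_0)=u(0)$ together with $W_{B,2}\tau(\H(0,\cdot)x_0)=0$, i.e.\ with $(\H(0,\cdot)x_0,u(0))\in\V(0)$. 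Since $u\in C^2$, the second assertion of Theorem~\ref{main result Phs1} supplies the unique classical solution $(x,y)$, and translating back, using that $\H(\cdot)$ and $\H(\cdot)^{-1}$ are strongly continuous, gives $y\in C([0,\infty);\K^2)$ and $t\mapsto\big(\alpha(t)\partial_t w,\,T(t,\cdot)\partial_\zeta w\big)=\H(t,\cdot)x(t)\in C^1((0,\infty);L^2)\cap C([0,\infty);L^2)$.

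For Part~$(3)$ I would verify Assumption~\ref{Assumption NpHsPassivity} in the case $u_2\ne 0$: here $nN=m=d=2$, $R$ and $J$ are bounded self-adjoint $2\times 2$ matrices, $\Re P_0\le 0$ (indeed $P_0=0$), and $W_C=S\left[\begin{smallmatrix}I+V&I-V\end{smallmatrix}\right]$ has full rank since $S$ is invertible. By hypothesis $V,S$ are chosen so that \eqref{Scattering passive-CondPHS} holds for all $t\ge 0$, $J$ satisfies $c_0^{-1}\le J(t)\le c_0$ and hence is uniformly coercive, and $R\in L^\infty_{Loc}([0,\infty);\L(\K^2))$. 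The passivity and well-posedness clauses of Theorem~\ref{main result Phs1} then give that the system is $(R,\H,J)$-scattering passive and well-posed.

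Finally, in Part~$(4)$ the extra assumption that $\alpha$ is constant makes $P_1=\left[\begin{smallmatrix}0&1/\alpha\\ 1/\alpha&0\end{smallmatrix}\right]$ time-independent, so that the adjoint domains $D(A^*(t))$ in \eqref{NphsOpeDomainad} are independent of $t$; together with Parts~$(1)$ and $(3)$ all hypotheses of Proposition~\ref{main result Phs2} are met, which produces, for $\omega_0,\omega_1\in L^2(a,b;\C)$, a unique mild solution in $X$, that is a map $t\mapsto\H(t,\cdot)x(t)=\big(\alpha\,\partial_t w,\,T(t,\cdot)\partial_\zeta w\big)\in C([0,\infty);L^2(a,b;\K^2))$ satisfying the corresponding variation-of-constants identity. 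The proof is thus essentially bookkeeping; the two points needing genuine care are the identification of the concrete initial/boundary compatibility conditions with the abstract requirement $(\H(s,\cdot)x_s,u(s))\in\V(s)$, and, in Part~$(3)$, the verification that $V$ and $S$ can indeed be chosen to fulfil \eqref{Scattering passive-CondPHS} while keeping $J$ coercive, whereas the nontrivial matrix identities ($W_B(t)\Sigma W_B(t)^*\ge 0$ and the structure of $W_C$) are already recorded in the text preceding the proposition.
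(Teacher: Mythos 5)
Your proposal is correct and follows essentially the same route as the paper, which presents this proposition as a direct consequence of Theorem~\ref{main result Phs1} (parts (1)--(3)) and Proposition~\ref{main result Phs2} (part (4)) once the vibrating string is written as a first-order non-autonomous port-Hamiltonian system and the computations of $W_B$, $W_B(t)\Sigma W_B^*(t)$ and the regularity/coercivity of $P_1(t)$, $\H(t,\cdot)$ preceding the statement are in place. Your additional bookkeeping (matching the compatibility conditions with $(\H(0,\cdot)x_0,u(0))\in\V(0)$ and noting that constant $\alpha$ makes $D(A^*(t))$ time-independent) is exactly the verification the paper leaves implicit.
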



\par\noindent  
\subsection{Timoschenko beam}
Consider the following model of the Timoshenko beam with time-dependent coefficient and time dependent boundary control
\begin{align}\label{TimoshenkoEq1}
\frac{\partial }{\partial t}\big(\tilde\rho(t)\rho(t,\zeta)\frac{\partial w}{\partial t}(t,\zeta)\big)&=\frac{1}{\tilde\rho(t)}\frac{\partial}{\partial \zeta}\Big[ K(t,\zeta)\Big(\frac{\partial}{\partial \zeta}w(t,\zeta)+\phi(t,\zeta)\Big)\Big]
\\
\label{TimoshenkoEq2}\frac{\partial }{\partial t}\big(\tilde I_\rho(t)I_\rho(t,\zeta)\frac{\partial \phi}{\partial t}(t,\zeta)\big)&=\frac{1}{\tilde I_\rho(t)}\frac{\partial}{\partial \zeta}\Big(EI(t,\zeta)\frac{\partial ^2}{\partial \zeta}\phi(t,\zeta)\Big)+\frac{1}{\tilde \rho(t)}K(t,\zeta)\Big(\frac{\partial }{\partial \zeta}w(t,\zeta)-\phi(t,\zeta)\Big)
\end{align}
\begin{align}\frac{\partial w}{\partial t}(t,a)&=u_1,\qquad &t\geq 0
\\\frac{\partial\phi}{\partial t}(t,a)&=u_2,\qquad  &t\geq 0
\\ K(t,b)\Big[\frac{\partial w}{\partial \zeta}(t,b)-\phi(t,b)\Big]+\alpha_1\tilde\rho(t)\frac{\partial w}{\partial t}(t,b)&=u_3, \qquad  &t\geq 0
\\ EI(t,b)\frac{\partial \phi}{\partial \zeta}(t,b)+\alpha_2\tilde I_\rho(t)\frac{\partial\phi}{\partial t}(t,b)&=u_4,\qquad  &t\geq 0
\end{align}
for some positive constants $\alpha_1, \alpha_2\geq 0$. 
Here $\zeta \in (a,b)$, $t\geq 0$, $w(t,\zeta)$ is the transverse displacement of the beam and $\phi(t,\zeta)$ is the rotation angle of the filament of the beam. We assume that $K$, $\rho$, $EI$, $I_\rho \in C^2([0,\infty);L^\infty(a,b)) \cap C_b([0,\infty);L^\infty(a,b))$ and  there exists $m>0$ such that for a.e\  $\zeta\in[a,b]$ and all $t\geq 0$ we have \[m^{-1} \leq \rho(t,\zeta), K(t,\zeta),EI,I_\rho\leq m,\] where $\rho(t,\zeta)$ and $I_\rho$ are strictly positive. Moreover, $\tilde\rho, \tilde I_\rho \in C^1([0,\infty))$  are strictly positive. 

\par\noindent Indeed, taking as state variable $x:=(\frac{\partial w}{\partial \zeta}-\phi, \tilde \rho\rho\frac{\partial w}{\partial t}, \frac{\partial \phi}{\partial \zeta},\tilde I_\rho I_\rho\frac{\partial\phi}{\partial t})$ one can easily see that \eqref{TimoshenkoEq1}-\eqref{TimoshenkoEq2} can be written as a system of the form \eqref{port-Hamiltonian system}-with 
\medskip

$P_1=\begin{bmatrix}
0 &\tilde\rho^{-1}& 0 & 0\\
\tilde\rho^{-1} & 0 & 0 & 0\\
0 & 0 & 0 & \tilde I_\rho^{-1}\\
0 & 0 & \tilde I_\rho^{-1} & 0\\
\end{bmatrix} $, $P_0=\begin{bmatrix}
0 & 0 & 0 & -\tilde I_\rho^{-1}\\
0 & 0 & 0 & 0\\
0 & 0 & 0 & 0\\
\tilde I_\rho^{-1} & 0 & 0 & 0\\
\end{bmatrix}$ and $\H=\begin{bmatrix}
K & 0 & 0 & 0\\
	0 & \rho^{-1} & 0 & 0\\
	0 & 0 & EI & 0\\
0 & 0 & 0 & I_\rho^{-1}\\
\end{bmatrix}$.
\medskip

The boundary condition can be formulated as follows 
\begin{align*}\begin{bmatrix} 
0\\
0\\
0\\
0
\end{bmatrix}&=\begin{bmatrix} 0&0&0&0&0&1&0&0
\\ 0&0&0&0&0&0&0&1
\\ 1&\alpha_1&0&0&0&0&0&0
\\ 0&0&1&\alpha_2&0&0&0&0
\end{bmatrix}\begin{bmatrix} \H(t,b)x\\ \H(t,a)x\end{bmatrix} =: W_B \begin{bmatrix} \H(t,b)x\\ \H(t,a)x\end{bmatrix}
\end{align*}
Thus $W_B$ has full rank and the corresponding $4\times 8$ matrix $W_B(t)$ is given by 
\begin{align*}W_B(t)&= W_B \begin{bmatrix}
P_1^{-1}(t)& I \\
-P_1^{-1}(t)&I
\end{bmatrix}=\begin{bmatrix} -\tilde \rho(t)&0&0&0&0&1&0&0
\\ 0&0&-\tilde I_\rho(t)&0&0&0&0&1
\\\alpha_1\tilde \rho(t)&\tilde \rho(t)&0&0&1&\alpha_1&0&0
\\0&0&\alpha_2\tilde I_\rho(t)&\tilde I_\rho(t)&0&0&1&\alpha_2
\end{bmatrix}.
\end{align*}
Thus $W_B(t)\Sigma W_B^*(t)=\left[\begin{smallmatrix}0&0&0&0
\\ 0&0&0&0
\\ 0&0&4\alpha_1\tilde \rho(t)&0
\\ 0&0&0&\alpha_2\tilde I_\rho(t)
\end{smallmatrix}\right]\geq 0$. 
As in Example \ref{Ex vibrating string}, the output equation can be choosing similarly as \eqref{output Vibrating}.  Thus the above Timoshenko beam fit into the framework of port-Hamiltonian system and thus one obtain a similar results to that presented in Proposition \ref{solvability main results vibrating string}. 
\par 

\end{document}